\documentclass[11pt]{article}

\usepackage{amsfonts,amssymb}

\usepackage{amsthm}

\usepackage{amsmath}

\usepackage{amscd}
\usepackage{tabularx}
\usepackage{arydshln}
\usepackage{verbatim}
\usepackage{color}
\usepackage{a4wide}
 
\usepackage[all]{xy}

\theoremstyle{plain}

\newtheorem{theorem}{Theorem}[section]
\newtheorem{lemma}[theorem]{Lemma}

\newtheorem{corollary}[theorem]{Corollary}
\newtheorem{Counter-example}[theorem]{Counter-example}
\newtheorem{remark}[theorem]{Remark}
\newtheorem{example}[theorem]{Example}

\theoremstyle{definition}

\newtheorem{definition}[theorem]{Definition}

\theoremstyle{remark}

\usepackage{amssymb}


\def\R{\mathbb R}

\def\Z{\mathbb Z}

\def\S{\mathbb S}
\def\L{\mathbb L}

\def\Z{\mathbb Z}
\def\E{\mathbb E}

\begin{document}

\title{Lightlike manifolds and Cartan Geometries}

\author{
	\emph{Francisco J. Palomo}
	\thanks{The author is partially supported by the Spanish MINECO and ERDF project
MTM2016-78807-C2-2-P
and by the Junta Andaluc\'{\i}a  and ERDF I+D+I project A-FQM-494-UGR18.} \\
	 Departamento de Matem\'{a}tica Aplicada \\
Universidad de M\'{a}laga,  29071-M\'{a}laga (Spain)	\\
Email address: \emph{ fpalomo@uma.es} 
\date{\empty}
}

\maketitle


\begin{abstract}
Lightlike Cartan geometries are introduced as Cartan geometries modelled on the future lightlike cone in Lorentz-Minkowski spacetime. Then, we provide an approach to the study of lightlike manifolds from this point of view. It is stated that every lightlike Cartan geometry on a manifold $N$ provides a lightlike metric $h$ with radical distribution globally spanned by a vector field $Z$. For lightlike hypersurfaces of a Lorentz manifold, we give the condition that characterizes when the pull-back of the Levi-Civita connection form of the ambient manifold is a lightlike Cartan connection on such hypersurface.
In the particular case that a lightlike hypersurface is properly totally umbilical, this construction essentially returns  the original lightlike metric. From the intrinsic point of view, starting from a given lightlike manifold $(N,h)$, we show a method to construct  a family of ambient Lorentzian manifolds that realize $(N,h)$ as a hypersurface. This method is inspired on the Feffermann-Graham ambient metric construction in conformal geometry and provides a lightlike Cartan geometry on the original manifold when $(N,h)$ is generic.

 \end{abstract}

\noindent Key words: Lightlike manifolds, Cartan connections, Correspondence spaces, Generalized conformal structures, Fefferman-Graham ambient metric.



\section{Introduction}

A lightlike metric on a manifold $N$ is a symmetric $(0,2)$-tensor $h$, which is positive but non-definite, and whose radical
$\mathrm{Rad}(h):=\{v\in TN: h(v,.)=0\}$ defines a $1$-dimensional distribution on $N$ (see Definition \ref{52}). 
Thus, the radical distribution $\mathrm{Rad}(h)$ is the smallest possible one that assures degeneracy of $h$ at every point.
The lightlike manifolds  $(N,h)$ naturally appear as hypersurfaces of Lorentzian manifolds and this was our original motivation. The terminology \emph{lightlike manifolds} stems from General Relativity where lightlike hypersurfaces are models of various types of horizons. Roughly speaking, the horizon of a set $A$ marks the limit of the region of such spacetime controlled by a set $A$ \cite[Chap. 14]{One83}.
Also in General Relativity, the existence of smooth closed achronal totally geodesic lightlike hypersurfaces (the Null Splitting Theorem) \cite[Theor. IV.1 ]{Galloway} has important consequences in order to obtain rigidity results (see for instance  \cite[Theor. IV.3 ]{Galloway}). Lightlike hypersurfaces also appear as characteristic submanifolds for the wave equation on a Lorentzian manifold.

The existence of the radical distribution $\mathrm{Rad}(h)$ in a lightlike manifold $(N,h)$ prevents from determining a linear connection on $N$, which is a key difference with the case of semi-Riemannian manifolds. Thus, there is no distinguished covariant derivative on lightlike manifolds, in general. Moreover, whenever the lightlike manifold is realized as a hypersurface in a Lorentzian manifold, its normal fiber bundle intersects with the tangent bundle of the hypersurface. There is no decomposition of the tangent bundle of the ambient Lorentzian manifold along the immersion and hence, the classical theory of non-degenerate submanifolds is not applicable in this case.

Let us recall several approaches that have been developed along the years to deal with such difficulties in the study of lightlike manifolds, with no claim to be exhaustive. As far as we know, one of the pioneering works is \cite{Penrose80} (this article originally appeared in 1963). In this paper, R. Penrose considered the initial value problem in General Relativity when the initial hypersurface is lightlike, rather than spacelike. Several points of view with a more geometric flavour were developed in the seventies (see for instance \cite{Rosca72} and \cite{Bonnor}).
A linear connection for lightlike hypersurfaces is introduced in \cite{Katsuno1,Katsuno2} by Katsuno. The analysis in these papers is developed by means of  local coordinates with the restriction that ambient Lorentzian manifold is assumed to be  $4$-dimensional.

The notion of lightlike hypersurface with base on a spacelike surface appears in \cite{Koss} where a regularity condition is assumed.
In words of Kossowski, this condition {\it prohibits the lightlike hypersurface from being too flat}  (see Example \ref{10123}). In
1996, Bejancu and Duggal introduced a general geometric technique to deal with lightlike
hypersurfaces \cite {DB96}, consisting in the use of a  non-degenerate
screen distribution, which has since been used extensively.
The geometry of {\it degenerate submanifolds} of
semi-Riemannian manifolds of arbitrary signature is also thoroughly discussed in \cite{Kupeli}, where  additional geometric structures in the ambient manifold are considered. 

Several of the above mentioned techniques rest on a certain {\it normalization} of the lightlike hypersurface $N$, in the sense that  they assign either a lightlike vector field  that does not belong to the tangent hyperplanes of $N$, or a spacelike screen distribution on $N$. These two normalization methods of a lightlike hypersurface are equivalent. In this direction,  methods of construction of such normalization are presented in  \cite{AkGold}, the Cartan's method for dealing with local equivalence problems has been applied to $3$-dimensional lightlike manifolds in \cite{NuRo}, and Lorentzian manifolds $(M,g)$ which admit recurrent lightlike vector fields $Z$ have been studied  in \cite{Leistner}. In this latter particular case, $M$ is foliated by lightlike hypersurfaces (see Example \ref{1701201}).

To round off this overview, we mention two facts that will be relevant for this paper: on the one hand, the rigidity of the future lightcone in Minkowski space as obtained in \cite{BFZ}; on the other hand, generalized conformal structures and their relation with lightlike manifolds as presented in \cite{BZ17}, which in particular presents the notion of generic and simple lightlike manifold  that will be useful here (see Sections 2 and 6).

\smallskip

The need for a new point of view could be questioned at the light of such number of works on lightlike manifolds. Nevertheless, there is another geometrical situation where the lightlike metrics arise naturally, namely, the study of conformal Riemannian geometry. 
Roughly speaking, starting with a conformal Riemann manifold $(M,[g])$, the space of scales $\mathbf{N}$ consists of the rays of metrics 
$y:=s^{2}g_{x}$ on $T_{x}M$ where $x\in M$ and $s\in \R_{>0}$, \cite{F-G}. There exists a natural projection $\pi:\mathbf{N}\to M$. 
In the terminology of Fefferman and Graham, the \emph{tautological tensor} is the lightlike metric $h$ on $\mathbf{N}$ given by $h(X,Y)= s^{2}g_{x}(T_{y}\pi \cdot X, T_{y}\pi \cdot Y )$, where $X,Y\in T_{y}\mathbf{N}$ and $T\pi$ is the differential map of $\pi$. Then, an ambient metric $g_{_{L}}$ can be defined so that $(M, g_{_{L}})$ is a Lorentzian manifold  that admits $(\mathbf{N}, h)$ as an embedded lightlike hypersurface. From this point of view, lightlike manifolds play an interesting role linking conformal Riemannian geometry and Lorentz geometry. The interest in lightlike manifolds within these theories is purely technical as an intermediate step, whereas in this paper we will reinterpret these constructions focusing on the lightlike manifold. This interplay has inspired Section 6 of this paper. 
The original Fefferman-Graham metric requires certain Ricci-flatness properties and depends only on the conformal class of Riemannian metrics, thus immediately leading to a construction for conformal invariants. An in-depth study and several generalisations of this kind of ideas can be cast in the context of  Cartan geometries \cite{CG03,CS09}.

\smallskip

A Cartan geometry  modelled on a homogeneous space $G/H$ on a manifold $M$  permits to endow $M$  with a differential geometric structure, whose
objects may be thought of as curved analogs of the homogeneous space $G/H$, \cite{CS09} (see Section 3 for a rigorous  definition). The main idea of this paper is to describe the lightlike manifolds as certain Cartan geometries modelled on a suitable homogeneous model. 
Two {\it naive} questions constitute the starting point of this paper: 1)  Is it possible to find an intrinsic approach to lightlike manifolds?
Such {\it intrinsic} approach should rely on reasonable choices with geometric meaning, thus providing local invariants for lightlike manifolds. 2) Does this  {\it intrinsic geometric structure} shed some light on the interplay between lightlike manifolds and another geometric structures? Taking into account that lightlike hypersurfaces are conformally invariant, a certain invariance under conformal changes of the ambient Lorentzain metric will be desirable.

\smallskip

This paper is written from the perspective of Cartan geometries and is organized as follows. 
The first step tackles the search for the homogeneous model $G/H$ for lightlike manifolds, which is dealt with in  Section 2. The meaning of this {\it homogeneous model} for lightlike manifolds deserves some elaboration.
The homogeneous model $\mathcal{N}=G/H$ should be a connected manifold endowed with a $G$-invariant lightlike metric $h$. That is, every left multiplication $\ell_{g}$ by $g\in G$ on $\mathcal{N}$ satisfies $(\ell_{g})^{*}(h)=h$. Moreover, for every $h$-isometry, $f:\mathcal{N}\to \mathcal{N}$, there should be an element $g\in G$ such that $f=\ell_{g}$. Finally, a Liouville type theorem should be satisfied in the sense that
every isometry between two open connected subsets of $\mathcal{N}$ uniquely extends to a global isometry of $\mathcal{N}$.
The study of the groups of isometries of lightlike manifolds in \cite{BFZ} leads our choice of the {\it homogeneous model} for lightlike manifolds. The model for lightlike manifolds will be the 
$(m+1)$-dimensional future lightlike cone $\mathcal{N}^{m+1}$ in Lorentz-Minkowski spacetime $\L^{m+2}$ (Definition \ref{9121}). The future lightlike cone can be described as the homogeneous space $G/H$ of the  M\"{o}bius  group 
$G=PO(m+1,1)$ where $H$ is isomorphic to the group of rigid motions of the $m$-dimensional Euclidean space $\E^{m}$ (see Subsection 2.2). We recall the {\it Liouville type theorem for lightlike geometry} \cite[Theor. 1.1]{BFZ}, which is essential in our approach and justifies our choice of $\mathcal{N}^{m+1}=G/H$ as the model for lightlike geometry (see Corollary \ref{lightKlein}).
Section 2 is also devoted to fix some terminology and notations and introduces the well-known relationship between the future lightlike cone $\mathcal{N}^{m+1}$ and the M\"{obius} sphere $\S^{m}$. 
This section ends with several algebraical properties of the future lightlike cone $\mathcal{N}^{m+1}$ as the {\it Klein geometry} $G/H$, that is, as homogeneous space. We show that $G/H$ is a first order geometry but there is no reductive decomposition for $G/H$ (Lemma \ref{lemalgebra} and Remark \ref{9122}).

\smallskip

Section 3 leans on \cite{CS09} to provide an introduction to Cartan geometries and correspondence spaces. The Maurer-Cartan form of the M\"{o}bius group $G$ is described from a geometrical point of view, by its formulation in terms of the geometry of the future lightlike cone $\mathcal{N}^{m+1}\subset \L^{m+2}$ (Subsection 3.2). The Lie group $G$ is identified with a fiber subbunde of the fiber bundle of linear frames on $\mathcal{N}^{m+1}$ and then, its Maurer-Cartan form is the pull-back to $G$ of the Levi-Civita connection form of $\L^{m+2}$. Thus, the Maurer-Cartan form of $G$ results from the embedding $\mathcal{N}^{m+1}\subset \L^{m+2}$.
From another point of view, $\L^{m+2}$ is essentially the above mentioned ambient space for the conformal sphere $(\S^{m}, [g_{_{\S^{m}}}])$ and $\L^{m+2}$ can be constructed from a section of the projection $\mathcal{N}^{m+1}\to \S^{m}$, (\ref{pro}). 
These two approaches are summarized as follows. In the first one, we get the Maurer-Cartan form of $G$ form the given embedding  $\mathcal{N}^{m+1}\subset \L^{m+2}$. In the second one, we construct an ambient Lorentz manifold starting from the future lightlike cone $\mathcal{N}^{m+1}$ and a section of the projection map. These two points of view will be extended to more general lightlike manifolds in Sections 5 and 6, respectively.

\smallskip

The second step of our programme, developed in Section 4, starts by assuming that there is a Cartan geometry $(p: \mathcal{P}\to N ,\omega)$ on the $(m+1)$-dimensional manifold $N$ of type  $(G,H)$, where $\mathcal{N}^{m+1}=G/H$. Then, we construct a unique lightlike metric $h^{\omega}$ on $N$ as well as a vector field $Z^{\omega}\in  \mathfrak{X}(N)$ that globally spans the radical distribution of $h^{\omega}$ (Theorem \ref{direct}). This result leads to the notion of lightlike Cartan geometry on $N$ (Definition \ref{9123}). This section also includes both the general (and intrinsic) definition of lightlike manifolds and several special cases that appear in \cite{BZ17}. Since the homogeneous model $\mathcal{N}^{m+1}$ is a first order Klein geometry, the total space $\mathcal{P}$ can be identified with all admissible linear frames $Q^{\omega}\subset L(N)$ of the lightlike manifold 
$(N,h^{\omega},Z^{\omega})$, as shown in Equation \eqref{236}.

Section 4 ends with several comments on the Lie group $\mathrm{Aut}(Q^{\omega}, \omega)$ for $N$ a connected manifold (see definition in Section 3).
Every $(F,f)\in \mathrm{Aut}(Q^{\omega}, \omega)$ satisfies $F=Tf$, where $Tf$ is the natural lift of the differential map of $f:N\to N$ to $Q^{\omega}\subset L(N)$. The diffeomorphisms $f$ preserve the lightlike metric $h^{\omega}$ and the vector field $Z^{\omega}$.
In general, $\mathrm{Aut}(Q^{\omega}, \omega)$ is only a proper subgroup of the group of diffeomorphisms preserving $h^{\omega}$ and $Z^{\omega}$, since the latter may even be infinite-dimensional (Corollary \ref{27121} and Example \ref{235}).

\smallskip

The third step  in the study of Cartan geometries is the most subtle, as well as the most important. The question arises as to whether Cartan geometries with a fixed homogeneous model are actually determined by underlying geometric structures on manifolds.
This problem is known as the equivalence problem for the given Cartan geometry. In our setting, this question is stated as follows.
Let $(N,h)$ be a lightlike manifold with radical distribution spanned  by the fixed vector field $Z\in \mathfrak{X}(N)$. Is there a lightlike Cartan geometry $(p: \mathcal{P}\to N ,\omega)$ on $N$ such that the equalities for the lightlike metric $h^{\omega}=h$ and the vector field $Z^{\omega}=Z$ hold? Is there any condition on such lightlike Cartan geometry that permits to assert its unicity? The complete answer  to these questions is not yet ascertained in a general setting, but
Sections 5 and 6 will provide  partial answers to the equivalence problem for lightlike Cartan geometries.

The equivalence problem for lightlike hypersurfaces $(N,h,Z)$ immersed in a given Lorentzian manifold $(M,g)$ is faced in Section 5. First, we recall the notions of expansion function $\lambda$ and null second fundamental form $B_{Z}$, given by Equations (\ref{231}) and (\ref{10121}), respectively.
In this situation, Theorem \ref{main1} and Corollary \ref{umbilical1} are stated as follows.
\begin{quotation}
The pull-back $\omega$ of the Levi-Civita connection from of $(M,g)$ to the $H$-principal fiber bundle of admissible linear frames $Q$
 is a Cartan connection if and only if $(N,h,Z)$ is generic and the expansion function $\lambda$ is nowhere vanishing. In this case, the lightlike metric $h^{\omega}=h(\overline{\nabla}^{g}Z, \overline{\nabla}^{g}Z)$ and the vector field $Z^{\omega}=\frac{1}{\lambda}Z$.

\smallskip

If, in addition, $(N,h, Z)$ is totally umbilical in $(M,g)$ with null second fundamental form $B_{Z}=\rho \, \bar{h}$, then $\omega$ is a Cartan connection if and only if the functions $\rho$ and $\lambda$ are nowhere vanishing. In this case, the lightlike metric $h^{\omega}=\rho^{2}\, h$ and the vector field $Z^{\omega}=\frac{1}{\lambda}Z$. 
\end{quotation}
The totally umbilical lightlike hypersurfaces are invariant under conformal changes of the metric $g$. Thus, up to a conformal change, Corollary \ref{umbilical1} gives a partial answer to the equivalence problem for totally umbilical lightlike manifolds.
Section 5 also includes conditions to assert that a {\it generic and expansive} lightlike hypersurface is locally isometric to the future lightlike cone $\mathcal{N}^{m+1}$ or to the fiber bundle of scales of a conformal Riemannian manifold, Corollary \ref{13121}.

\smallskip

Section 6 provides a construction method of an ambient Lorentzian manifold $(M,g)$ from a given lightlike manifold $(N,h,Z)$. In some sense, the expansion proceeds so that  the original lightlike manifold is a hypersurface of the resulting Lorentzian manifold. This construction is inspired by the ambient metric construction by Fefferman and Graham \cite{F-G} (see Remark \ref{11122}). The procedure for building an ambient Lorentzian manifold can be summarized as follows.
Without loss of generality, the vector field $Z$ that spans the radical distribution can be chosen to be complete so, after a rescaling, we get a right multiplicative action of $\R_{>0}$  on $N^{m+1}$, (\ref{301}). We adopt the technical assumptions that this action is free and proper, which in the case of lightlike hypersurfaces, for instance,  are closely related with the causality properties of the ambient Lorentzian metric (Remark \ref{11121}). In this situation, the orbit space $M=N/\R_{>0}$ is an $m$-dimensional smooth manifold that can be physically interpreted as the light source. Then, from every section $\Gamma: M\to N$, we give a family of Lorentzian metrics on ${\bf M}:=(-\varepsilon, +\varepsilon)\times M \times \R_{>0}$, which are a generalization of the warped product metrics (Remark \ref{warped}). 
The original lightlike manifold $(N,h,Z)$ is now realized as an isometric embedding in ${\bf M}$ with the expansion function---defined in Equation (\ref{231})---chosen as $\lambda=1$. A suitable Ricci flatness condition reduces the family to a one-parametric subset $\{g^{c}\}_{c\in \R}$.
In this setting, the following Theorem \ref{61} is stated.
\begin{quotation}
Let  $(N^{m+1}, h, Z)$ be a generic lightlike manifold with $Z$ complete. Assume the $\R_{>0}$-action deduced by the flow of $Z$ given by Equation (\ref{301}) is free and proper. Then, for every spacelike section $\Gamma:N/\R_{>0}:=M\to N$ and $c\in \R$, the pull-back
$\omega^{c}$ of the Levi-Civita connection form of $g^{c}$ is a lightlike Cartan geometry on $N$.
 The lightlike metric and the vector field deduced from $\omega^{c}$ are $h^{c}=h(\nabla^{c}Z, \nabla^{c}Z)$ and $Z^{c}=Z$, respectively. 
\end{quotation} 
\noindent The paper ends with several examples, showing the application  to simple and generic lightlike manifods \cite{BZ17} and  to geometric flows.

\smallskip

Needless to say, this paper contains more questions than answers. Several ones have been  mentioned above but we would like to end this introductory section with a summary of such open questions:

\begin{itemize}
\item Is it possible to obtain a geometrical characterization of the family of lightlike manifolds $(N,h,Z)$ that can be constructed from Theorem \ref{direct}? We tentatively suggest that a subtle approximation to the notion of lightlike manifolds might be required.

\item Is there any underlying geometric structure for lightlike Cartan geometries that stem from Definition \ref{9123}?

\item  The group $\mathrm{Aut}(\mathcal{P}, \omega)$ for a lightlike Cartan geometry with $N$ connected is a Lie group, which is in general   only a proper subgroup of $\mathrm{Iso}(N,h,Z)$. What is then the geometrical meaning of $\mathrm{Aut}(\mathcal{P}, \omega)$?

\item  In order to recover the original lightlike metric $h$ on a generic lighlike manifold, the following question seems natural. Can the Levi-Civita connection in Theorem  \ref{61} be replaced by a metric connection with torsion on ${\bf M }$ so that $\nabla^{c}Z$ is the identity endomorphism on $TN$? 
\end{itemize}


\section{The guideline example}

\noindent All the manifolds are assumed to be smooth, Hausdorff and satisfying the second axiom of countability. We start by considering the well-known relationship between the 
lightlike cone in the Lorentz-Minkowski spacetime and the M\"{o}bius sphere. A detailed exposition of these facts can be found in \cite{Baum} and \cite{CS09}. In this section we also fix some terminology and notations. We follow closely the treatment given in \cite{CS09}.

Let $\L^{m+2}$ be the $(m+2)$-dimensional Lorentz-Minkowski spacetime, that is, $\L^{m+2}$ is $\R^{m+2}$ endowed
with the Lorentzian metric 
$
\langle\, ,\, \rangle =
-dx_{0}^{2}+\sum_{i=1}^{m}dx_{i}^{2},
$
 where
$(x_{0},x_{1},\dots, x_{m+1})$ are the canonical coordinates of
$\R^{m+2}$.  
Along this paper we assume $m\geq 2$, unless otherwise was stated. 
The future lightlike cone is the hypersurface given by
$$
\mathcal{N}^{m+1}=\left\{v\in \L^{m+2}:\langle v , v\rangle=0, \,\, v_{0}> 0\right\}.
$$
Let us consider $\mathcal{N}^{m+1}/\sim$ the space of lines spanned by elements in $\mathcal{N}^{m+1}$ as a subset of the real projective space $\R P^{m+1}$.
The natural projection
$\pi:\mathcal{N}^{m+1}\to \mathcal{N}^{m+1}/\sim$ is a principal fiber bundle with structure group $\R_{>0}$.  
The vertical distribution $\mathcal{V}$ of $\pi$ is defined at any $v\in \mathcal{N}^{m+1}$ by the kernel of $T_{v}\pi$, the differential map of $\pi$ at $v$. Under the usual identification, we have
$\mathcal{V}(v):=\mathrm{Ker}(T_{v}\pi)=\R \cdot v\subset v^{\perp}= T_{v}\mathcal{N}^{m+1}$. 
An explicit description for the projective future lightlike cone $\mathcal{N}^{m+1}/\sim$ is achieved as follows. Let us consider the $m$-dimensional unit sphere $\S^{m}\subset \E^{m+1}\subset \L^{m+2}$, where $\E^{m+1}$ denotes the $(m+1)$-dimensional Euclidean space embedded in $\L^{m+2}$ at $x_{0}=0$. 
The map $x\mapsto \pi(1,x)$ defines a diffeomorphism between $\S^{m}$ and $\mathcal{N}^{m+1}/\sim$ with inverse $\pi(v_{0}, \cdots , v_{m+1})\mapsto 1/v_{0}(v_{1}, \cdots , v_{m+1})$. Thus, $\S^m$ may be view as the space of lightlike lines in $\L^{m+2}$ and the projection $\pi$ reads as  
\begin{equation}\label{pro}
\mathcal{N}^{m+1} \to \S^{m}, \quad (v_{0},v_{1},\cdots, v_{m+1})\mapsto \frac{1}{v_{0}}(v_{1},\cdots, v_{m+1}).
\end{equation}
In this picture, the projection $\pi$ induces an isomorphism
$
T_{v}\pi:v^{\perp}/(\R \cdot v)\to T_{\pi(v)}\S^{m}
$
which satisfies 
\begin{equation}\label{difP}
T_{v}\pi \cdot w=T_{s\, v}\pi \cdot s\,w,
\end{equation}
for every $s \in \R_{>0}$ and $w\in T_{v}\mathcal{N}^{m+1}$.

At metric level, the future lightlike cone $\mathcal{N}^{m+1}$ inherits from the Lorentzain metric $\langle \,,\,\rangle$ a degenerate symmetric bilinear form $h$ whose radical is exactly the $1$-dimensional vertical distribution $ \mathcal{V}$. The position vector field $\mathcal{Z}\in \mathfrak{X}(\mathcal{N}^{m+1})$ given by $\mathcal{Z}(v)=(v)_{v}$ (where $(\,\,)_{v}$ denotes the usual identification between $T_{v}\R^{m+2}$ and $\R^{m+2}$) spans the vertical distribution $\mathcal{V}$.
Taking into account that $v\in \mathcal{N}^{m+1}$ is a lightlike vector, on every quotient vector space $v^{\perp}/(\R \cdot v)$, the bilinear form $\langle \,,\,\rangle$ induces  a (positive definite) inner product, which can be carried over $T_{\pi(v)}\S^{m}$ by means of $T_{v}\pi$. 
If we replace $v$ by $sv$ for $s\in \R_{>0}$ and using (\ref{difP}), we have that the carried bilinear form on $T_{\pi(v)}\S^{m}$ is multiplied by $s^2$. 
Hence, we get an inner product up to positive multiples on each tangent space $T_{\pi(v)}\S^{m}$.

\smallskip
Recall that two Riemannian metrics $g$ and $g'$ on a manifold $M$ are said to be conformally related when $g'=e^{2\phi}g$ for a smooth function $\phi$ on $M$. The set of all the Riemannian metrics of the form $e^{2\phi}g$ for $\phi \in C^{\infty}(M)$ is called the conformal class $[g]$ of the metric $g$. Let $g_{_{\S^{m}}}$ be the usual round metric of constant sectional curvature $1$ on $\S^m$ and $[g_{_{\S^{m}}}]$ its conformal class. 
The conformal manifold $(\S^{m},[g_{_{\S^{m}}}])$ is called the M\"{o}bius sphere. 
For any (spacelike) section $\sigma:\S^{m}\to \mathcal{N}^{m+1}$ of $\pi$  given by $\sigma(x)=e^{\phi(x)}(1,x)$, the Riemannian metric $\sigma^{*}\langle \,\,,\,\,\rangle =e^{2\phi}g_{_{\S^{m}}}\in [g_{_{\S^{m}}}]$. Conversely,
for every metric $g\in [g_{_{\S^{m}}}]$, there exists a section $\sigma$ of $\pi$ with $g=\sigma^{*}\langle \,\,,\,\,\rangle$.

A Riemannian metric on a manifold $M$ is a section of the fiber bundle of positive definite symmetric bilinear forms $\mathrm{Sym}^{+}(TM)$ on $M$. Following ideas in \cite{BZ17}, the future lightlike cone $\mathcal{N}^{m+1}$ can be seem from a different point of view. 
Namely, every $v\in \mathcal{N}^{m+1}$ with $\pi(v)=x$ gives an element $c(v)\in \mathrm{Sym}^{+}(T_{x}\S^{m})$ requiring that $T_{v}\pi: v^{\perp}/(\R\cdot v)\to T_{x}\S^{m}$ is an isometry.
Thus, $\mathcal{N}^{m+1}$ is realized as a $(m+1)$-dimensional submanifold of $\mathrm{Sym}^{+}(T\S^{m})$ and the following diagram commutes
$$
\begin{array}{rcl}
 \mathcal{N}^{m+1} & \stackrel{c\ \ }\longrightarrow &\mathrm{Sym}^{+}(T\S^{m}) \\  \searrow &  &  \swarrow  \\   & \S^{m} &
\end{array}
$$
For every $x\in \S^{m}$, $c(\pi^{-1}(x))=\{s^{2}\,g_{_{\S^{m}}}\vert _{x}: s>0\}$ is the positive ray spanned by $g_{_{\S^{m}}}$ at $x\in \S^{m}$.
The triple $(\mathcal{N}^{m+1},h, \mathcal{Z})$ is our guideline example for the study of lightlike manifolds. 

\subsection{ $\mathcal{N}^{m+1}$ as homogeneous model for lightlike manifolds}

We turn now to the Klein geometry picture of the these facts. That is, we give a description of the above construction in terms of homogeneous spaces.
Let us consider $\L^{m+2}$ with a fixed basis $\mathcal{B}=(\ell,e_{1},e_{2},\cdots , e_{m}, \eta)$ with $\ell\in \mathcal{N}^{m+1}$ and such that the corresponding matrix to $\langle \,,\,\rangle$ with respect to $\mathcal{B}$ is given by $$S:=\left(\begin{array}{ccc} 0 & 0 & 1\\ 0 & \mathrm{I}_{m} & 0 \\ 1 & 0 & 0 \end{array} \right), $$
where $\mathrm{I}_{m}$ is the identity matrix with $m$-rows. 
Let $O(m+1, 1)=\{\sigma\in \mathrm{Gl}(m+2, \R):\sigma^{t}S\sigma=S\}$ be the pseudo-orthogonal group of signature $(m+1,1)$, that is, the Lorentz group. 
In our setting, it is better to replace the Lorentz group by the M\"{o}bius  group 
$PO(m+1,1):=O(m+1, 1)/\{\pm \mathrm{Id}\}$. In fact, we prefer to consider $\S^{m}$ as the space of lightlike lines $\mathcal{N}^{m+1}/\sim$ so that the Lie group $PO(m+1, 1)$ acts effectively on $\S^{m}$.

From now on, let $G:=PO(m+1,1):=O(m+1, 1)/\{\pm \mathrm{Id}\} $ be the M\"{o}bius  group. For $\sigma\in O(m+1,1)$, let us denote by $[\sigma]$ the corresponding element in $G$. 
The natural action of $O(m+1,1)$ on $\L^{m+2}$  leaves the lightlike vectors invariant and induces a transitive smooth action on the whole lightlike cone
$$
C^{m+1}=\{v\in \L^{m+2}:\langle v, v\rangle=0, v\neq 0\}=(-\mathcal{N}^{m+1})\cup \mathcal{N}^{m+1}.
$$
Let $C^{m+1}/\Z_{2}$ be the quotient identifying antipodal points. Every class  $[v]=\{\pm v\}\in C^{m+1}/\Z_{2}$ has an unique representative $v^{+}\in \mathcal{N}^{m+1}$. This property permits to identify $\mathcal{N}^{m+1}$ with $C^{m+1}/\Z_{2}$. Under such identification, the Lie group $G$ acts on the future lightlike cone as follows
 \begin{equation}\label{action-cone}
 G\times \mathcal{N}^{m+1}\to \mathcal{N}^{m+1},\quad ([\sigma],v)\mapsto \lambda^{\mathcal{N}^{m+1}}_{[\sigma]}(v):=(\sigma v)^{+}.
 \end{equation}
 This action is transitive on $\mathcal{N}^{m+1}$ and thus we get a diffeomorphism $\mathcal{N}^{m+1}\cong G/H,
$
where $H$ is the isotropy subgroup of the element $\ell \in \mathcal{N}^{m+1}$. 
\begin{definition}\label{9121}
The manifold $\mathcal{N}^{m+1}\cong G/H$ is called the $(m+1)$-dimensional lightlike homogeneous model manifold.
 \end{definition}

The action of the group $G$ descends on $\S^{m}\cong \pi(\mathcal{N}^{m+1})$ by $\lambda^{\S^{m}}_{[\sigma]}(\pi(v))=\pi(\sigma v)$, where $\pi$ is given in (\ref{pro}). This action is effective and transitive and we get a diffeomorphism $\S^{m}\cong G/P$, where $P$ is the isotropy subgroup of $\pi(\ell)$.  The group $P$ is called the Poincar\'{e} conformal group.
The group $G$ acts on $\S^{m}$ by conformal transformations (see details in \cite{Baum}) and the explicit formula is as follows
$$
\lambda^{\S^{m}}_{[\sigma]}(x)=\frac{1}{\sigma(1,x)^{+}_{0}}\left(\sigma(1,x)^{+}_{1},\cdots ,\sigma(1,x)^{+}_{m+1} \right),
$$
where $\sigma(1,x)^{+}=(\sigma(1,x)^{+}_{0},\cdots ,\sigma(1,x)^{+}_{m+1})\in \mathcal{N}^{m+1}.$ 
For $m\geq 2$, the Lie group of global conformal transformations of the M\"{o}bius  sphere $(\S^{m}, [g_{_{\S^{m}}}])$ is the M\"{o}bius  group $G$. Moreover, for $m\geq 3$, every conformal transformation betweeen two connected open subsets of $(\S^{m},  [g_{_{\S^{m}}}])$ is the restriction of a {\it left multiplication} $\lambda^{\S^{m}}_{[\sigma]}$ by an element $[\sigma]\in G$. This local property does not hold for $m=2$. For the case $m=1$, every diffeomorphim of the circle $\S^{1}$ gives a conformal transformation. 
As a consequence of these rigidity properties, 
the following {\it Liouville type theorem for lightlike geometry} has been obtained  in \cite[Theor. 1.1]{BFZ}.

 \begin{theorem}\label{isometrias}
For $m\geq 2$, the group of (global) isometries $\mathrm{Iso}(\mathcal{N}^{m+1}, h)$ is the M\"{o}bius group $G$ acting as in (\ref{action-cone}).
\begin{enumerate}
\item [i)] For $m\geq 3$, every isometry betweeen two connected open subsets of $\mathcal{N}^{m+1}$ is the restriction of the action by an element of $G$.
\item [ii)] The group of local isometries of $\mathcal{N}^{3}$ is the group of local conformal transformations of $\S^{2}$.
\item [iii)] Every diffeomorphism of the circle $\S^{1}$ corresponds to an isometry of $\mathcal{N}^{2}\subset \L^{3}$.
\end{enumerate}
\end{theorem}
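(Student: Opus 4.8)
The plan is to exploit the projection $\pi:\mathcal{N}^{m+1}\to\S^{m}$ and to reduce the whole statement to the classical Liouville theorem in conformal geometry recalled just before the statement. First I would observe that the radical $\mathrm{Rad}(h)=\mathcal{V}$ is intrinsically attached to $h$, so any (local or global) isometry $f$ of $(\mathcal{N}^{m+1},h)$ satisfies $Tf\big(\mathrm{Rad}(h)_{v}\big)=\mathrm{Rad}(h)_{f(v)}$. Since $\mathcal{V}$ is the vertical distribution of $\pi$, whose leaves are precisely the lightlike rays, $f$ carries fibres to fibres and therefore descends to a diffeomorphism $\bar f$ of $\S^{m}$ (of the appropriate open set), uniquely characterized by $\pi\circ f=\bar f\circ\pi$; smoothness of $\bar f$ follows from $\pi$ being a surjective submersion.

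Next I would check that $\bar f$ is a conformal transformation of $(\S^{m},[g_{_{\S^{m}}}])$. For $v$ over $x=\pi(v)$, the form $h$ induces, via the isomorphism $T_{v}\pi$, a representative of $[g_{_{\S^{m}}}]$ on $T_{x}\S^{m}$. Differentiating $\pi\circ f=\bar f\circ\pi$ gives $T_{x}\bar f\circ T_{v}\pi=T_{f(v)}\pi\circ T_{v}f$, and combining this identity with $f^{*}h=h$ shows that $\bar f$ pulls a representative of the conformal class back to a representative; hence $\bar f$ is conformal. At this point I would invoke the conformal rigidity recalled above: for $m\geq 2$ the group of global conformal transformations of the M\"obius sphere is exactly $G=PO(m+1,1)$ acting by the maps $\lambda^{\S^{m}}_{[\sigma]}$, and for $m\geq 3$ every conformal map between connected open subsets is a restriction of some $\lambda^{\S^{m}}_{[\sigma]}$.

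The remaining, and to my mind the crucial, step is the lift. Since $h$ is the restriction of the $O(m+1,1)$-invariant ambient metric, each $\lambda^{\mathcal{N}^{m+1}}_{[\sigma]}$ is an isometry covering $\lambda^{\S^{m}}_{[\sigma]}$; hence $g:=\big(\lambda^{\mathcal{N}^{m+1}}_{[\sigma]}\big)^{-1}\circ f$ is an isometry of $h$ covering the identity of $\S^{m}$. Such a $g$ preserves every ray, so $g(v)=a(v)\,v$ for a positive function $a$, and a direct computation, using $\langle v,v\rangle=0$ and $T_{v}\mathcal{N}^{m+1}=v^{\perp}$, yields $g^{*}h=a^{2}\,h$. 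Because $h$ has rank $m\geq 1$ at every point, this forces $a\equiv 1$, so $g=\mathrm{id}$ and $f=\lambda^{\mathcal{N}^{m+1}}_{[\sigma]}$. This establishes the global identification $\mathrm{Iso}(\mathcal{N}^{m+1},h)=G$, and, combined with the local conformal Liouville theorem for $m\geq 3$, it yields part i).

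Finally, parts ii) and iii) would follow from the same descent--lift dictionary, the only change being the input from conformal geometry. For $m=2$ the local conformal Liouville theorem fails and the pseudogroup of local conformal maps of $\S^{2}$ is far larger than $G$; the correspondence between local isometries of $\mathcal{N}^{3}$ and local conformal transformations of $\S^{2}$ nevertheless persists, because only the descent and the scaling argument---neither of which uses rigidity---are involved. For $m=1$ every diffeomorphism of $\S^{1}$ is trivially conformal, and the lift then produces an isometry of $\mathcal{N}^{2}\subset\L^{3}$. I expect the main technical point to be exactly the interplay with conformal rigidity: the theorem is essentially a transcription of the conformal Liouville theorem through $\pi$, so the genuine work is (a) the clean descent of isometries to conformal maps and back, and (b) the uniqueness of the lift, resting on the computation $g^{*}h=a^{2}h$, while everything dimension-dependent is inherited from the conformal side.
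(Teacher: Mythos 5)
Your overall strategy coincides with the one behind the paper's (cited) proof: Theorem \ref{isometrias} is quoted from \cite[Theor. 1.1]{BFZ}, and the method recorded in Remark \ref{Klein} is precisely your reduction of lightlike isometries of $\mathcal{N}^{m+1}$ to conformal transformations of $\S^{m}$. Your two structural steps are sound: an isometry preserves $\mathrm{Rad}(h)=\mathcal{V}$ and hence descends through $\pi$, and your scaling computation is correct --- for $g$ covering the identity one has $g(v)=a(v)v$, $T_{v}g\cdot w=a(v)w+da_{v}(w)v$, and since $w\in v^{\perp}$ and $\langle v,v\rangle=0$ this gives $g^{*}h=a^{2}h$, forcing $a\equiv 1$ because $h$ has positive rank. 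This settles uniqueness of lifts and, combined with conformal Liouville, the global statement and part i). (Minor gloss in i): a fibre of $\pi$ may meet a connected open set $U$ in several components, so the descent is first only local; you should note that each locally descended map is the restriction of a unique element of $G$, whence $[\sigma]$ is locally constant on $U$ and the identity $\pi\circ f=\lambda^{\S^{m}}_{[\sigma]}\circ\pi$ propagates over all of $U$ before your scaling argument applies.)

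The genuine gap is in parts ii) and iii). There your dictionary needs the \emph{existence} of an isometric lift of an arbitrary local conformal map of $\S^{2}$, respectively of an arbitrary diffeomorphism of $\S^{1}$, and the only lift mechanism you actually construct --- composing with $\lambda^{\mathcal{N}^{m+1}}_{[\sigma]}$, i.e.\ restricting the ambient $O(m+1,1)$-action --- is unavailable for conformal maps that are not M\"{o}bius, which is exactly the situation in $m=2$ and $m=1$. Your scaling argument gives injectivity (uniqueness of lifts), not surjectivity, so asserting that ``the correspondence persists'' does not close this direction. The missing ingredient is the explicit lift used in \cite{BFZ} and recorded in Remark \ref{Klein}: under the identification (\ref{i}) of $\mathcal{N}^{m+1}$ with $\left(\S^{m}\times \R_{>0},\, s^{2}g_{_{\S^{m}}}\oplus 0\right)$, set $f^{\Phi}(x,s)=\left(\Phi(x),\, s e^{-\phi(x)}\right)$ where $\Phi^{*}(g_{_{\S^{m}}})=e^{2\phi}g_{_{\S^{m}}}$; a one-line check shows $(f^{\Phi})^{*}\left(s^{2}g_{_{\S^{m}}}\oplus 0\right)=s^{2}g_{_{\S^{m}}}\oplus 0$, since the conformal stretch $e^{2\phi}$ of the horizontal part is cancelled by the fibre rescaling $s\mapsto se^{-\phi(x)}$. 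Inserting this formula (which works locally as well, and for $m=1$ with $e^{\phi}=|\Phi'|$) completes ii) and iii), and with it your proposal becomes essentially the argument of \cite[Theor. 1.1]{BFZ} that the paper relies on.
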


\begin{remark}\label{Klein}
{\rm The method of the proof for \cite[Theor. 1.1]{BFZ} relies on the following fact. The lightlike cone $(\mathcal{N}^{m+1},h)$ is isometric to $\S^{m}\times \R$ endowed with $e^{2t}g_{_{\S^{m}}}\oplus 0$. This observation permits to give an explicit group isomorphism between $\mathrm{Iso}(\mathcal{N}^{m+1}, h)$ and the group $\mathrm{Conf}(\S^{m})$ of conformal transformations of $\S^m$ as follows.

For technical reasons, we prefer here to identify $\mathcal{N}^{m+1}$ with $\S^{m}\times \R_{>0}$ by meaning of
\begin{equation}\label{i}
i:\S^{m}\times \R_{>0} \to \mathcal{N}^{m+1}, \quad (x,s)\mapsto (s, sx),
\end{equation}
where we have used canonical coordinates of $\R^{m+2}$.
The bilinear form  $h$ and the vector field $\mathcal{Z}$ correspond with $\widehat{h}=s^{2}g_{\S^{m}}\oplus 0$ and $\widehat{\mathcal{Z}}=s\partial_{s}$, respectively. 
After the suitable change of variable, it is shown in \cite[Theor. 1.1]{BFZ} that
$
f\in \mathrm{Iso}(\S^{m}\times \R_{>0}, \widehat{h})
$ if and only if 
$f(x, s)= (\Phi(x), se^{-\phi(x)})$ where $\Phi\in \mathrm{Conf}(\S^{m})$ and $\Phi^{*}(g_{_{\S^{m}}})=e^{2\phi}g_{_{\S^{m}}}$.
For every $\Phi\in \mathrm{Conf}(\S^{m})$, let $f^{\Phi}\in \mathrm{Iso}(\S^{m}\times \R_{>0}, \widehat{h})$ be given by $f^{\Phi}(x, s)= (\Phi(x), se^{-\phi(x)})$, as above. 
The isomorphism is achieved as follows
$$
\mathrm{Conf}(\S^{m})\to \mathrm{Iso}(\S^{m}\times \R_{>0}, \widehat{h}),\quad  \Phi\mapsto f^{\Phi}.
$$

We know that $\Phi\in \mathrm{Conf}(\S^{m})$ if and only if $\Phi=\lambda^{\S^{m}}_{[\sigma]}$ for $[\sigma]\in G$ ($m\geq 2$) and the conformal factor is determined by $e^{2\phi(x)}=1/(\sigma(1,x)^{+}_{0})^{2}$.

The following diagram commutes
\begin{equation}
\begin{CD}
\S^{m}\times \R_{>0}@ > f^{\Phi} >> \S^{m}\times \R_{>0} \\
@V  i VV @VV i V \\
\mathcal{N} ^{m+1} @>>\lambda^{\mathcal{N}^{m+1}}_{[\sigma]}  > \mathcal{N}^{m+1}
\end{CD}
\end{equation}
 In fact, we compute for $(x,s)\in \S^{m}\times \R_{>0}$,
 $$
 i\left(f^{\Phi}(x,s)\right)=i\left((\Phi(x), se^{-\phi(x)})\right)=\left(se^{-\phi(x)} ,se^{-\phi(x)}\lambda^{\S^{m}}_{[\sigma]}(x) \ \right)=
 s (\sigma(1,x))^{+}=\lambda^{\mathcal{N}^{m+1}}_{[\sigma]}\left(i(x,s)\right).
 $$
We also have $T_{(x,s)}f^{\Phi}\cdot \widehat{\mathcal{Z}}(x,s)= \widehat{\mathcal{Z}} (f^{\Phi}(x,s))$. Therefore, every element of 
$\mathrm{Iso}(\mathcal{N}^{m+1}, h)$ preserves the vector field $\mathcal{Z}$.

}
 \end{remark}
As a consequence of Remark \ref{Klein} and \cite[Theor. 1.1]{BFZ}, we obtain the following description of the Klein geometry determined by $G$ on $\mathcal{N}^{m+1}$.

\begin{corollary}\label{lightKlein}
As Klein geometry $\mathcal{N}^{m+1}\cong G/H$ and for $m\geq 2$, the left mutiplications by elements of the Lie group $G$ agree with the diffeomorphisms of $\mathcal{N}^{m+1}$ preserving $h$ and $\mathcal{Z}$. For $m\geq 3$, every diffeomorphism betweeen two connected open subsets of $\mathcal{N}^{m+1}$ which preserves $h$ and $\mathcal{Z}$ is the restriction of the left multiplication by an element of $G$.
\end{corollary}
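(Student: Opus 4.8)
The plan is to read off the statement from Theorem \ref{isometrias} together with the closing observation of Remark \ref{Klein}, since the genuine content---the global identification of the isometry group and the local rigidity---is already available there. The only thing to verify is that imposing preservation of $\mathcal{Z}$ \emph{in addition} to preservation of $h$ does not shrink the relevant family of diffeomorphisms.

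First I would unwind the definitions. A diffeomorphism $f\colon\mathcal{N}^{m+1}\to\mathcal{N}^{m+1}$ preserves $h$ exactly when $f^{*}h=h$, that is, when $f\in\mathrm{Iso}(\mathcal{N}^{m+1},h)$. By Theorem \ref{isometrias}, for $m\geq2$ this group is precisely $G$ acting through the left multiplications $\lambda^{\mathcal{N}^{m+1}}_{[\sigma]}$ of (\ref{action-cone}). Hence the diffeomorphisms preserving $h$ already coincide with the left multiplications by elements of $G$.

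Next I would invoke the final assertion of Remark \ref{Klein}: every element of $\mathrm{Iso}(\mathcal{N}^{m+1},h)$ preserves $\mathcal{Z}$. Concretely, transporting the identity $T_{(x,s)}f^{\Phi}\cdot\widehat{\mathcal{Z}}=\widehat{\mathcal{Z}}(f^{\Phi}(x,s))$ through the identification (\ref{i}) yields $T\lambda^{\mathcal{N}^{m+1}}_{[\sigma]}\cdot\mathcal{Z}=\mathcal{Z}\circ\lambda^{\mathcal{N}^{m+1}}_{[\sigma]}$. Therefore the set of diffeomorphisms preserving $h$ is contained in the set of those preserving $\mathcal{Z}$, and intersecting with the latter changes nothing:
\begin{equation*}
\{\,f:\ f^{*}h=h,\ Tf\cdot\mathcal{Z}=\mathcal{Z}\circ f\,\}=\{\,f:\ f^{*}h=h\,\}=\{\,\lambda^{\mathcal{N}^{m+1}}_{[\sigma]}:\ [\sigma]\in G\,\}.
\end{equation*}
This establishes the first assertion for $m\geq2$.

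Finally, for the local statement with $m\geq3$ I would argue in the same spirit. A diffeomorphism $f$ between two connected open subsets of $\mathcal{N}^{m+1}$ that preserves $h$ and $\mathcal{Z}$ is, in particular, a local isometry of $(\mathcal{N}^{m+1},h)$; hence Theorem \ref{isometrias}(i) already forces $f$ to be the restriction of some left multiplication $\lambda^{\mathcal{N}^{m+1}}_{[\sigma]}$, and the extra condition on $\mathcal{Z}$ is automatic by the previous paragraph. The reverse inclusion is immediate, since restrictions of left multiplications preserve both $h$ and $\mathcal{Z}$. As this is essentially a repackaging of the cited results, I do not expect a real obstacle; the one point that deserves care is the automatic preservation of $\mathcal{Z}$ by every $h$-isometry, which is exactly what the explicit flow computation in Remark \ref{Klein} supplies, and which reflects that $\mathcal{Z}$ is intrinsically tied to $h$ rather than being independent data.
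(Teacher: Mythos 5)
Your proposal is correct and follows exactly the paper's own route: the corollary is stated there as an immediate consequence of Theorem \ref{isometrias} (the Liouville-type theorem of \cite{BFZ}) together with the closing computation of Remark \ref{Klein} showing that every $h$-isometry automatically preserves $\mathcal{Z}$. Your explicit handling of the two inclusions and of the local case for $m\geq 3$ just spells out what the paper leaves implicit, with no divergence in method.
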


\subsection{Algebraic description of the model}

The elements of the Poincar\'{e} conformal group $P$ correspond with the elements of $G$ of the form
$$
P=\left\{\left[\begin{array}{ccc} \lambda & -\lambda w^{t}g & -\frac{\lambda}{2}\| w^{t}\|^2 \\ 0 & g& w \\ 0 & 0 & \lambda^{-1} \end{array} \right]: \lambda \in \R \setminus \{0\},\,\, w\in \R^{m},\,\, g\in O(m)\right\},
$$
where $\|w^{t}\|^2=g_{_{\R^{m}}}(w,w)$ denotes the square of the usual Euclidean norm on $\R^{m}$ \cite[Prop. 1.6.3]{CS09}. The Lie group $H$ corresponds to the elements of $P$ with $\lambda=\pm 1$. The group of rigid motions of the $m$-dimensional Euclidean space $\E^m$ is naturally isomorphic to $H$. That is, $H$ can be identified with the semi-direct product $ \R^{m}\rtimes O(m)\cong \mathrm{Euc}(\E^{m})$ by means of
\begin{equation}\label{iso1}
\left(\begin{array}{cc} 1 &  0  \\ w & g  \end{array} \right)
\mapsto \left[\begin{array}{ccc} 1 & -  w^{t}g & -\frac{1}{2}\| w^{t}\|^2\\ 0 & g & w \\ 0 & 0 & 1 \end{array} \right]\in H.
\end{equation}
The Lie group $H$ is a closed normal subgroup of $P$ and $P/H\cong \R_{>0}$.
The natural projection $$G/H \to G/P, \quad gH\mapsto gP$$ gives a principal fiber bundle with structure group $P/H$ and corresponds to the projection given in (\ref{pro}).

At Lie algebras level, we have for the Lie algebra $\mathfrak{g}$ of $PO(m+1,1)$, 
$$
\mathfrak{g}= \left\{\left(\begin{array}{ccc} a & Z & 0\\ X & A & -Z^{t} \\ 0 & -X^{t} & -a \end{array} \right): a\in \R, X\in \R^{m}, Z\in (\R^{m})^{*}, A\in \mathfrak{o}(m)\right\}.
$$
The decomposition
$
\mathfrak{g}=\mathfrak{g}_{-1}\oplus \mathfrak{g}_{0} \oplus \mathfrak{g}_{1}
$ where
$$
\mathfrak{g}_{-1}=\left\{\left(\begin{array}{ccc} 0 & 0 & 0\\ X & 0 & 0 \\ 0 & -X^{t} & 0 \end{array} \right)\right\}, 
\mathfrak{g}_{0}=\left\{\left(\begin{array}{ccc} a & 0 & 0\\ 0 & A & 0 \\ 0 & 0 & -a \end{array} \right)\right\}, 
\mathfrak{g}_{1}=\left\{\left(\begin{array}{ccc} 0 & Z & 0\\ 0 & 0 & -Z^{t} \\ 0 & 0 & 0 \end{array} \right)\right\}$$
defines a grading $\mathfrak{g}$, that is, $[\mathfrak{g}_{i},\mathfrak{g}_{j}]\subset \mathfrak{g}_{i+j}$, where $\mathfrak{g}_{k}=0$ when $k\notin \{-1,0,1\}$.
The Lie algebra of $P$ is given by $\mathfrak{p}=\mathfrak{g}_{0}\oplus \mathfrak{g}_{1}$ and the Lie algebra $\mathfrak{h}$ is the Lie subalgebra of $\mathfrak{p}$ determinated by $a=0$ \cite[Sec. 1.6.3]{CS09}. We have $\mathfrak{g}_{0}=\mathfrak{z}(\mathfrak{g}_{0})\oplus [\mathfrak{g}_{0}, \mathfrak{g}_{0}]$, where $\mathfrak{z}(\mathfrak{g}_{0})$ is the center of the Lie algebra $\mathfrak{g}_{0}$ and $\mathfrak{h}=[\mathfrak{g}_{0}, \mathfrak{g}_{0}]\oplus \mathfrak{g}_{1}$.
The quotient vector space $\mathfrak{g}/ \mathfrak{h}$ can be identified with $\R \times \R^{m}$ as follows 
\begin{equation}\label{151}
\left(\begin{array}{ccc} a & Z & 0\\ X & A & -Z^{t} \\ 0 & -X^{t} & -a \end{array} \right)+ \mathfrak{h} \mapsto \left(
\begin{array}{c}
a \\ X
\end{array}
\right)\in \R \times \R^{m}.
\end{equation}
Let us recall that the quotient adjoint representation $\underline{\mathrm{Ad}}:H\longrightarrow \mathrm{Gl}(\mathfrak{g}/\mathfrak{h)}$  is defined as 
\begin{equation}\label{141}
 \underline{\mathrm{Ad}}[\sigma]:\mathfrak{g}/\mathfrak{h} \to \mathfrak{g}/\mathfrak{h},\quad Y+\mathfrak{h}\mapsto \mathrm{Ad}[\sigma](Y)+\mathfrak{h},\end{equation}
where $\mathrm{Ad}$ denotes the adjoint representation of $G$.

\smallskip

In a general setting, let $\mathfrak{g}\subset \mathfrak{gl}(m,\R)$ be a Lie subalgebra.
For $k=0,1,2,\cdots $, let $\mathfrak{g}_{k}$ be the space of symmetric $(k+1)$-multilinear mappings $$t:\R^{m}\times \cdots \times \R^{m}\to \R^{m}$$ such that, for each $v_{1}, \cdots , v_{k}\in \R^{m}$, the linear transformation $v\in \R^{m}\to t(v, v_{1}, \cdots , v_{k})\in \R^{m}$ belongs to $\mathfrak{g}$. The space $\mathfrak{g}_{k}$ is called the $k$-th prolongation of $\mathfrak{g}$. If there is a $k$ such that $\mathfrak{g}_{k}=0$, the Lie algebra $\mathfrak{g}$ is said to be of finite order $k$, otherwise $\mathfrak{g}$ is of infinite type \cite[p. 4]{Kob}.
Recall that $\mathfrak{g}\subset \mathfrak{gl}(m,\R)$ is of infinite type if it contains a matrix of range $1$ as an element \cite[Proposition 1.4]{Kob}.

\smallskip

Here are several properties of the homogeneous manifold $G/H$ as a Klein geometry.
\begin{lemma}\label{lemalgebra}\footnote{I am grateful to Professor Cristina Draper for the indications for the proof of this Lemma.}
Let us consider $\mathcal{N}^{m+1}=G/H$ the $(m+1)$-dimensional lightlike homogeneous model manifold with $m\geq 2$. Then,
\begin{itemize}
\item[i)] The Lie algebra $\mathfrak{h}$ does not admit any reductive complement in $\mathfrak{g}$.
\item[ii)] Under the identification (\ref{151}), the quotient adjoint representation satisfies
$$
\underline{\mathrm{Ad}}[\sigma]\left(\begin{array}{c}
a \\ X
\end{array}
\right)=\left(\begin{array}{c}
a-  g_{_{\R^{m}}}( w, gX) \\  gX
\end{array}
\right),
$$
where 
 $[\sigma]=\tiny{\left[\begin{array}{ccc} 1 & - w^{t}g & -\frac{1}{2}\| w^{t}\|^2\\ 0 & g & w \\ 0 & 0 & 1 \end{array} \right]}\in H$.
\item[iii)] $\underline{\mathrm{Ad}}$ is injective. That is, the Klein geometry $G/H$ is a first order Klein geometry.

\item[iv)] The Lie algebra $\mathfrak{h}$ is of infinite type.
\end{itemize}
\end{lemma}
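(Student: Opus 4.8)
The plan is to prove the four items of Lemma~\ref{lemalgebra} by direct computation with the explicit matrix realizations of $\mathfrak{g}$, $\mathfrak{h}$, and $H$ already set up in the excerpt, handling items (ii), (iii), (iv) first since they are computational, and leaving (i) for last as the main obstacle.

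For item (ii), I would take a generic element $\left(\begin{smallmatrix} a & Z & 0\\ X & A & -Z^t\\ 0 & -X^t & -a\end{smallmatrix}\right)$ representing a class in $\mathfrak{g}/\mathfrak{h}$ and conjugate it by $[\sigma]\in H$ of the stated form, i.e.\ compute $[\sigma]\,Y\,[\sigma]^{-1}$, then read off the resulting $(a,X)$-data modulo $\mathfrak{h}$ using the identification \eqref{151}. The key is that modding out by $\mathfrak{h}$ (the subalgebra with $a=0$, i.e.\ $\mathfrak{h}=[\mathfrak{g}_0,\mathfrak{g}_0]\oplus\mathfrak{g}_1$) kills all the $A$-, $Z$-components, so only the top-left corner entry and the first-column vector $X$ survive. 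The entry $a$ picks up a correction term from conjugation precisely of the form $-g_{_{\R^m}}(w,gX)$, and $X$ transforms to $gX$; this is a routine but careful block-matrix multiplication, so I would only indicate the relevant blocks rather than write the full $(m+2)\times(m+2)$ product. For item (iii), injectivity of $\underline{\mathrm{Ad}}$ follows immediately from the formula in (ii): if $\underline{\mathrm{Ad}}[\sigma]=\mathrm{Id}$, then taking the $X$-component forces $gX=X$ for all $X\in\R^m$, hence $g=\mathrm{I}_m$, and then the $a$-component forces $g_{_{\R^m}}(w,X)=0$ for all $X$, hence $w=0$; thus $[\sigma]$ is the identity of $H$. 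By definition a Klein geometry is first order exactly when $\underline{\mathrm{Ad}}$ is injective, giving the claim.

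For item (iv), I would apply the cited criterion \cite[Proposition 1.4]{Kob}: a Lie subalgebra of $\mathfrak{gl}(n,\R)$ is of infinite type as soon as it contains a rank-one matrix. Here the relevant representation is the faithful action of $\mathfrak{h}$ on $\mathfrak{g}/\mathfrak{h}\cong\R^{m+1}$ guaranteed by (iii), i.e.\ $\mathfrak{h}\hookrightarrow\mathfrak{gl}(m+1,\R)$ via the derivative $\underline{\mathrm{ad}}$ of $\underline{\mathrm{Ad}}$. Differentiating the formula in (ii) at the identity, the elements of $\mathfrak{g}_1$ (those with $g=\mathrm{I}_m$, $A=0$, $a=0$, $w$ infinitesimal) act on $(a,X)$ by $(a,X)\mapsto(-g_{_{\R^m}}(w,X),0)$, which is manifestly a rank-one endomorphism of $\R^{m+1}$ for any nonzero $w$. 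Hence $\mathfrak{h}$ contains a rank-one element and is of infinite type.

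The main obstacle is item (i), the nonexistence of a reductive complement, since a reductive decomposition $\mathfrak{g}=\mathfrak{h}\oplus\mathfrak{m}$ with $\mathrm{Ad}(H)\mathfrak{m}\subset\mathfrak{m}$ is a more global $H$-invariance condition than first-orderness. My plan is to argue by contradiction: suppose such an $\mathrm{Ad}(H)$-invariant complement $\mathfrak{m}$ exists. Infinitesimally, $\mathfrak{m}$ would then be an $\mathrm{ad}(\mathfrak{h})$-invariant complement to $\mathfrak{h}$ in $\mathfrak{g}$, so $\mathfrak{m}$ carries a representation of $\mathfrak{h}$ isomorphic to $\mathfrak{g}/\mathfrak{h}$. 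Since $\dim\mathfrak{g}/\mathfrak{h}=m+1$ and $\mathfrak{m}$ must contain a vector mapping onto the grading piece $\mathfrak{g}_{-1}$ and onto the center direction $a$, I would examine how $\mathfrak{g}_1$ (inside $\mathfrak{h}$) acts and show the bracket relations force $\mathfrak{m}$ to be non-invariant. Concretely, the grading $[\mathfrak{g}_1,\mathfrak{g}_{-1}]\subset\mathfrak{g}_0$ is nonzero, so a putative $\mathfrak{h}$-invariant lift of $\mathfrak{g}_{-1}$ would have to absorb a $\mathfrak{g}_0$-component under bracketing with $\mathfrak{g}_1\subset\mathfrak{h}$, but that $\mathfrak{g}_0$-component splits as center plus $[\mathfrak{g}_0,\mathfrak{g}_0]\subset\mathfrak{h}$, and tracking the center part yields the contradiction — no choice of lift closes up under $\mathrm{ad}(\mathfrak{g}_1)$. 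I would phrase this cleanly by noting that any reductive $\mathfrak{m}$ would make $\underline{\mathrm{Ad}}$ lift to a genuine $\mathrm{Ad}(H)$-submodule, which the non-vanishing of the rank-one action from (iv) obstructs; indeed the same rank-one deformation that shows infinite type is exactly the obstruction to reductivity, tying items (i) and (iv) together.
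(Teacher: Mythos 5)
Your items (ii), (iii) and (iv) are fine and run parallel to the paper: (ii) and (iii) are exactly the routine computation the paper leaves to the reader, and for (iv) you apply Kobayashi's rank-one criterion to the isotropy representation $\underline{\mathrm{ad}}$ of $\mathfrak{h}$ on $\mathfrak{g}/\mathfrak{h}\cong\R\times\R^{m}$, whereas the paper applies it to the affine realization $\mathfrak{h}\cong\mathfrak{euc}(\E^{m})\subset \mathfrak{gl}(m+1,\R)$ coming from (\ref{iso1}); these two realizations are dual to one another, both contain rank-one matrices, and either makes Kobayashi's Proposition 1.4 applicable.

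The genuine gap is in item (i). Your proposed ``clean phrasing'' --- that the rank-one action exhibited in (iv) is itself the obstruction to a reductive complement, tying (i) to (iv) --- is a non sequitur, and as a general principle it is false: $\mathfrak{gl}(n,\R)$ contains rank-one matrices and is of infinite type, yet the affine Klein geometry $\mathrm{Aff}(\R^{n})/\mathrm{Gl}(n,\R)$ is reductive, with the translations as an invariant complement. So infinite type of the isotropy algebra cannot imply (i). Moreover, the place where you locate the contradiction is misidentified: for a lift $\tilde{X}=X+\phi(X)$ of $X\in\mathfrak{g}_{-1}$ into a putative invariant complement $\mathfrak{m}$, invariance under $W\in\mathfrak{g}_{1}\subset\mathfrak{h}$ together with $\mathfrak{h}\cap\mathfrak{m}=0$ forces $[W,\tilde{X}]=c\,\tilde{E}$ with $c$ the scalar given by $\underline{\mathrm{ad}}(W)$, and the $\mathfrak{z}(\mathfrak{g}_{0})$-components of the two sides then agree automatically --- ``tracking the center part'' produces no contradiction at this stage (one would instead have to compare the $[\mathfrak{g}_{0},\mathfrak{g}_{0}]$-components). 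The quickest correct completion of your strategy uses the lift $\tilde{E}=E+A+U$ (with $A\in[\mathfrak{g}_{0},\mathfrak{g}_{0}]$, $U\in\mathfrak{g}_{1}$) of the class of the grading element: since $\underline{\mathrm{ad}}(W)$ annihilates this class, invariance gives $[W,\tilde{E}]\in\mathfrak{h}\cap\mathfrak{m}=0$; as $[W,E]=-W$ and $[\mathfrak{g}_{1},\mathfrak{g}_{1}]=0$, this forces $[W,A]=W$ for every $W\in\mathfrak{g}_{1}$, i.e.\ an element of $\mathfrak{o}(m)$ acting as $\pm\mathrm{id}$ on the natural module $\R^{m}$, which is impossible since skew-symmetric matrices are trace-free.

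By contrast, the paper's proof of (i) avoids correction terms altogether: it decomposes $\mathfrak{g}=\mathfrak{g}_{-1}\oplus\mathfrak{z}(\mathfrak{g}_{0})\oplus\mathfrak{h}_{0}\oplus\mathfrak{g}_{1}$ into irreducible modules over $\mathfrak{h}_{0}=[\mathfrak{g}_{0},\mathfrak{g}_{0}]\cong\mathfrak{o}(m)$, observes that $\mathfrak{z}(\mathfrak{g}_{0})$ is the unique trivial summand, so that the complement must contain the grading element $E$ exactly, and then derives the contradiction from $[E,\mathfrak{g}_{1}]\subset\mathfrak{h}\cap\mathfrak{m}=0$ against $\mathrm{ad}(E)\vert_{\mathfrak{g}_{1}}=\mathrm{id}$. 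In short: your plan for (i) is salvageable by an explicit bracket computation of the kind sketched above, but as written it rests on an incorrect mechanism and would not go through.
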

\begin{proof}
Let us assume there is a reductive decomposition $\mathfrak{g}=\mathfrak{h}\oplus \mathfrak{m}$, that is, we have $\mathrm{Ad}(H)(\mathfrak{m})\subset \mathfrak{m}$ and in particular $[\mathfrak{h}, \mathfrak{m}]\subset \mathfrak{m}$.  Consider $\mathfrak{h}_{0}=[\mathfrak{g}_{0}, \mathfrak{g}_{0}]\cong \mathfrak{o}(m)$, which is a simple Lie algebra. The simplicity of $\mathfrak{h}_{0}$ says that every $\mathfrak{h}_{0}$-module is completely reducible. In particular, the three 
 $\mathfrak{h}_{0}$-modules,  $\mathfrak{g}$, $\mathfrak{h}$ and $\mathfrak{m}$, decompose as a direct sum of irreducible $\mathfrak{h}_{0}$-modules. In such decomposition for $\mathfrak{g}$,
$$\mathfrak{g}=\mathfrak{g}_{-1}\oplus \mathfrak{z}(\mathfrak{g}_{0}) \oplus  \mathfrak{h}_{0} \oplus \mathfrak{g}_{1},$$
the following $\mathfrak{h}_{0}$-modules appear:
\begin{itemize}
\item the adjoint module, $\mathfrak{h}_{0}$;
\item two natural $\mathfrak{h}_{0}$-modules, $\mathfrak{g}_{-1}$ and its dual, $\mathfrak{g}_{1}$, which are isomorphic;
\item
a trivial one-dimensional $\mathfrak{h}_{0}$-module, the center $\mathfrak{z}(\mathfrak{g}_{0})$, spanned by 
$E=\tiny\left(\begin{array}{ccc} 1 & 0 & 0\\ 0 & 0 & 0 \\ 0 & 0 & -1 \end{array} \right).$ \newline ($E$ is the grading element). 
\end{itemize}
Also,   $\mathfrak{h}=\mathfrak{h}_{0} \oplus \mathfrak{g}_{1}$ provides the decomposition of $\mathfrak{h}$ as a direct sum of irreducible $\mathfrak{h}_{0}$-modules, one of them of adjoint type and the other one isomorphic to the natural $\mathfrak{o}(m)$-module $\R^{m}$. Hence, the complement $\mathfrak{m}=\mathfrak{m}_1\oplus \mathfrak{m}_2$ should be the sum of two irreducible $\mathfrak{h}_{0}$-modules,  a trivial module $\mathfrak{m}_1$ and   a natural one $\mathfrak{m}_2$. This means that $\mathfrak{m}_2\subset \mathfrak{g}_{1} \oplus \mathfrak{g}_{-1}$  and that $\mathfrak{m}_1=\mathfrak{z}(\mathfrak{g}_{0})$, since it is the unique trivial module in the decomposition of $\mathfrak{g}$. Hence $E\in \mathfrak{m}$. Now, $[E,\mathfrak{g}_1]\subset [\mathfrak{g}_0,\mathfrak{g}_1]\subset \mathfrak{g}_1\subset \mathfrak{h}$  but also $[E,\mathfrak{g}_1]\subset [\mathfrak{m},\mathfrak{h}]\subset \mathfrak{m}$. Thus $[E,\mathfrak{g}_1]\subset\mathfrak{h}\cap\mathfrak{m}=0$, which gives a contradiction since 
$(\textrm{ad}\,E)\vert_{\mathfrak{g}_{j}}=j\,\textrm{id}\vert_{\mathfrak{g}_{j}}$ for $j\in \{-1,0,1\}$.
The contradiction comes from our assumption on the existence of  $\mathfrak{m}$.

An easy computations shows item $ii)$ and then, item  $iii)$ follows immediately.  For item $iv)$, let us consider the Lie algebras isomorphism induced from (\ref{iso1}),
$$
\mathfrak{euc}(\E^{m})\to  \mathfrak{h},\quad \scriptsize{\left(\begin{array}{cc} 0 & 0 \\ Z & A   \end{array} \right)\mapsto \left(\begin{array}{ccc} 0 & Z & 0\\ 0 & A & -Z^{t} \\ 0 & 0 & 0 \end{array} \right)}.
$$
There are matrices of range $1$ in $\mathfrak{euc}(\E^{m})$ and \cite[Proposition 1.4]{Kob} gives the result.
\end{proof}

\begin{remark}\label{9122}
{\rm Every Cartan geometry with reductive homogeneous model admits a covariant derivative \cite[Chap. 3]{Sharpe}. Moreover, every first-order and reductive Cartan geometry on a manifold $M$ corresponds to a principal (Ehresmann) connection on a principal fiber bundle of linear frames on $M$ \cite[App. A]{Sharpe}. Consequently, the point of Lemma \ref{lemalgebra} is in the assertion: The Lie algebra $\mathfrak{h}$ does not admit any reductive complement in $\mathfrak{g}$ (see Remarks \ref{21121} and \ref{15121}).
}
\end{remark}

\begin{remark}\label{170320A}
{\rm For each $[\sigma] \in H$,  we can compute the action of $\mathrm{Ad}[\sigma]$ on the non-reductive complement $\mathfrak{g}_{-1}\otimes \mathfrak{z}(\mathfrak{g}_{0})$ as follows.
For each $1\leq i\leq m$, let us consider
\begin{equation}\label{170320B}
E_{i}=\scriptsize{\left(\begin{array}{ccc} 0 & 0 & 0\\  \bar{e}_{i}  & 0 & 0 \\ 0 &  -\bar{e}_{i}^{t}  & 0 \end{array} \right)}\in \mathfrak{g}_{-1},
\end{equation}
where $(\bar{e}_{1}, \cdots , \bar{e}_{m})$ is the canonical basis of $\R^{m}$. For
 $[\sigma]\in H$ as in Lemma \ref{lemalgebra} $ii)$, straightforward computations give
$$
\mathrm{Ad}[\sigma](E_{i})=
$$
$$
\scriptsize{\left(\begin{array}{ccc} -g_{_{\R^{m}}}(w, g \bar{e}_{i}) & 0 & 0\\   g \bar{e}_{i} & 0 &  0 \\ 0 &  -\bar{e}_{i}^{t}g^{-1}  &  g_{_{\R^{m}}}(w, g \bar{e}_{i}) \end{array} \right)}+ 
\scriptsize{\left(\begin{array}{ccc} 0 & -g_{_{\R^{m}}}(w, g \bar{e}_{i})w^{t}+ \frac{1}{2}\| w^{t} \|^{2}(g \bar{e}_{i})^{t} & 0\\  0  & g(\bar{e}_{i}w^{t}- w \bar{e}_{i}^{t})g^{-1} & g_{_{\R^{m}}}(w, g \bar{e}_{i})w- \frac{1}{2}\| w^{t} \|^{2}g \bar{e}_{i} \\ 0 &  0  & 0 \end{array} \right)}
$$
and for the grading element $E$,
$$
\mathrm{Ad}[\sigma](E)=E+ \tiny{\left(\begin{array}{ccc} 0 & w^{t} & 0\\  0  & 0 & -w \\ 0 &  0  & 0 \end{array} \right)}.
$$

}

\end{remark}

\section{Cartan connections and correspondence spaces}

In this section, we concisely give an introduction to Cartan connections and correspondence spaces construction. 
The general reference here is \cite{CS09}. Unless otherwise was stated, in this section $G$ and $H$ are arbitrary Lie groups.

For a principal fiber bundle $p:\mathcal{P}\to M$ with structure group $H$, let us denote $r^{\sigma}$ for the right translation on $\mathcal{P}$ by $\sigma \in H$ and $\xi_{X}\in \mathfrak{X}(\mathcal{P})$ for the fundamental vector field coresponding to $X\in \mathfrak{h}=\mathrm{Lie}(H)$. That is,
$$
\xi_{X}(u):=\frac{d}{dt}\vert_{0}(u \cdot \mathrm{exp}(tX)).
$$

Let $G$ be a Lie group and $H\subset G$ a closed subgroup and let $\mathfrak{g}$ be Lie algebra of $G$. A Cartan geometry of type $(G,H)$ on a manifold $M$ consists of
\begin{enumerate}
\item A principal fiber bundle $p:\mathcal{P}\to M$ with structure group $H$.

\item A $\mathfrak{g}$-valued $1$-form $\omega \in \Omega^{1}(\mathcal{P}, \mathfrak{g})$, called the Cartan connection, such that for every $u\in \mathcal{P}, \sigma\in H$ and $X\in \mathfrak{h}$, 
\begin{enumerate}
\item $
\omega(u):T_{u}\mathcal{P}\to \mathfrak{g}
$
is a linear isomorphism,
\item  $(r^{\sigma})^{*}(\omega)=\mathrm{Ad}(\sigma^{-1})\circ \omega$ and
\item $\omega(u)(\xi_{X}(u))=X.$
 \end{enumerate} 
\end{enumerate}
For every $X\in \mathfrak{g}$, the constant vector field $\omega^{-1}(X)\in \mathfrak{X}(\mathcal{P})$ is defined by the condition $\omega (\omega^{-1}(X)(u))=X$ for all $u\in \mathcal{P}$. Thus we obtain a linear map $\omega^{-1}:\mathfrak{g}\to \mathfrak{X}(\mathcal{P})$. In general $\omega^{-1}$ is not a Lie algebras homomorphism, the obstruction to this property is codified in the curvature form $K:=d\omega+ \frac{1}{2}[\omega, \omega]\in \Omega^{2}(\mathcal{P}, \mathfrak{g})$. In an equivalent way, we have
$$
K(\xi ,\eta)=d\omega (\xi, \eta)+[\omega(\xi), \omega (\eta)], \quad \xi, \eta\in \mathfrak{X}(\mathcal{P}). 
$$
A Cartan connection $\omega$ is said to be torsion free when $K$
takes values in $\mathfrak{h}$.
All the information of $K$ is contained in the curvature function $\kappa : \mathcal{P}\to \Lambda^{2}\mathfrak{g}^{*}\otimes \mathfrak{g}$ defined as $\kappa(u)(X,Y)=K(\omega^{-1}(X)(u),\omega^{-1}(Y)(u)).$ Taking into account that $K$ is horizontal, that is, $K$ vanishes when we insert a vertical tangent vector, the curvature function may be view as $\kappa : \mathcal{P}\to \Lambda^{2}(\mathfrak{g}/\mathfrak{h})^{*}\otimes \mathfrak{g}$ \cite[Lem. 1.5.1]{CS09}.

The first example of Cartan geometry of type $(G,H)$ is the canonical projection $p:G\to G/H$ endowed with the (left) Maurer-Cartan form $\omega_{G}\in \Omega^{1}(G, \mathfrak{g})$. This example is called the homogeneous model for the Cartan geometries of type $(G,H)$. The Maurer-Cartan equation implies that the homogeneous model of any Cartan geometry has zero curvature \cite[Sec. 1.2.4]{CS09}. In this case, for each $X\in \mathfrak{g}$, the constant vector field $\omega^{-1}_{G}(X)$  is the left invariant vector field $L_{X}$ on $G$ with $L_{X}(e)=X$.

A Cartan connection provides a description the tangent fiber bundle of the base manifold $M$ as follows.
Let us consider $
\underline{\mathrm{Ad}}:H\to \mathrm{Gl}(\mathfrak{g}/\mathfrak{h})
$ the quotient adjoint representation $(\ref{141})$.
 For each $u\in \mathcal{P}$ with $p(u)=x\in M$, there is a canonical linear isomorphism 
$\phi_{u}:T_{x}M \to \mathfrak{g}/\mathfrak{h}$ such that the following diagram conmutes
\begin{equation}\label{isomor}
\begin{CD}
T_{u}\mathcal{P}  @>\omega(u)>> \mathfrak{g}\\
@V T_{u} p  VV @VV \mathrm{proj.} V \\
T_{x}M @>>\phi_{u} \cong > \mathfrak{g}/\mathfrak{h}
\end{CD}\quad \quad \text{with } \phi_{u\sigma}=\underline{\mathrm{Ad}}(\sigma^{-1})\phi_{u} \text{ for all }\sigma\in H.
\end{equation}
There is a canonical isomorphism of vector fiber bundles,
\begin{equation}\label{142}
TM \cong \mathcal{P}\times_{_{H}} \mathfrak{g}/\mathfrak{h},\quad (x,v)\in TM\mapsto [u, \phi_{u}(v)],
\end{equation}
where $p(u)=x$ \cite[Theor. 3.15]{Sharpe}.

An isomorphism of Cartan geometries $(p:\mathcal{P}\to M, \omega)$ and $(p':\mathcal{P}'\to M', \omega')$ with the same model $(G,H)$ is a principal fiber bundle isomorphism $(F,f)$ 
$$
 \begin{CD}
\mathcal{P}  @>F >> \mathcal{P}'\\
@V p VV @VV p' V \\
M @>>  f > M'
\end{CD}
$$
such that $F^{*}(\omega ')=\omega$.
That is, $F$ is a diffeomorphism from $\mathcal{P}$ to $\mathcal{P}'$ (and so is $f$) such that $F\circ r^{\sigma}=r^{\sigma}\circ F$ for all $\sigma\in H$ and $F^{*}(\omega ')=\omega$. 
In the particular case that both Cartan geometries are the same, an isomorphism is called an automorphism.
The group $\mathrm{Aut}(\mathcal{P}, \omega)$ of all automorphisms of the Cartan geometry $(p:\mathcal{P}\to M, \omega)$ over a connected manifold $M$ is a Lie group (may be with uncountably connected
components) and dimension at most $\mathrm{dim}(G)$ \cite[Theor. 1.5.11]{CS09}.

A Cartan geometry $p:\mathcal{P}\to M$ of type $(G,H)$ has curvature form $K=0$ if and only if every point $x\in M$ has an open neighborhood $U\subset M$ such that $(p:p^{-1}(U)\to U, \omega\vert_{U})$ is isomorphic to the restriction of the homogeneous model $(G\to G/H, \omega_{G})$ to an open neighborhood of $o:=eH$  \cite[Prop. 1.5.2]{CS09}.

\subsection{Correspondence spaces}
 Let us recall the construction called correspondence spaces for general Cartan geometries.  
 This subsection derives from \cite[Sec. 1.5.13]{CS09}. Let $(p:\mathcal{P}\to M, \omega)$ be a Cartan geometry of type $(G,P)$ and $H\subset P$ a closed subgroup. The correspondence space for $H\subset P$ is defined to be $\mathcal{C}(M):=\mathcal{P}/H$. 
The natural projection $\pi:\mathcal{C}(M)\to M$ is a fiber bundle over $M$ with fiber the homogeneous space $P/H$
and $(\pi:\mathcal{P}\to \mathcal{C}(M), \omega)$ is a Cartan geometry of type $(G,H)$ on $\mathcal{C}(M)$ \cite[Prop. 1.5.13]{CS09}. We have the following commutative diagram
\begin{equation}\label{corres}
\begin{array}{rcl}
   & \mathcal{P}& \\  \swarrow   &  & \searrow \\ \mathcal{C}(M) & \stackrel{\pi }{\longrightarrow} & \,\,M
\end{array}
\end{equation}
We get from (\ref{142}) a description of the tangent fiber bundle of $\mathcal{C}(M)$ as  $\mathcal{P}\times _{_{H}}(\mathfrak{g}/ \mathfrak{h})$. In these terms, the vertical distribution of $\pi$ corresponds to $\mathcal{P}\times _{_{H}}(\mathfrak{p}/ \mathfrak{h})\subset \mathcal{P}\times _{_{H}}(\mathfrak{g}/ \mathfrak{h})=T\mathcal{C}(M)$ \cite[Prop. 1.5.13]{CS09}.
The corresponding curvature function
$
k^{\mathcal{C}(M)}:\mathcal{P}\to \Lambda^{2}(\mathfrak{g}/\mathfrak{h})^{*}\otimes \mathfrak{g}
$
satisfies $k^{\mathcal{C}(M)}(u)(X+\mathfrak{h}, *)=0$ for every $X\in \mathfrak{p}$ and $u\in \mathcal{P}$.

The following characterization of correspondence spaces \cite[Th. 1.5.14]{CS09} will be used in Corollary \ref{13121}.
Let $G$ be a Lie group and $H\subset P\subset G$ closed subgroups.
Let $(\pi:\mathcal{P}\to N, \omega)$ be a Cartan geometry of type $(G,H)$ such that the distribution $\mathcal{V}(N):= \mathcal{P}\times _{H}(\mathfrak{p}/ \mathfrak{h}) \subset TN$ is integrable. 
A (local) twistor space for $N$ is a smooth manifold $M$ together with an open subset $U\subset N$ and a surjective submersion $f:U\to M$ such that $\mathcal{V}_{x}N=\mathrm{Ker}(T_{x}f)$. That is, a (local) space of leaf for the foliation defined by $\mathcal{V}(N),$ \cite[Def. 1.5.14]{CS09}. 
Let us assume the curvature function 
$
k^{N}:\mathcal{P}\to \Lambda^{2}(\mathfrak{g}/\mathfrak{h})^{*}\otimes \mathfrak{g}
$
satisfies $k^{N}(u)(X+\mathfrak{h}, *)=0$ for every $X\in \mathfrak{p}$ and $u\in\mathcal{P}$. Then, for any sufficiently small local twistor space $f:U\to M$, there is a Cartan geometry of type $(G,P)$ on $M$ such that the restricted Cartan geometry $(p:p^{-1}(U) \to U, \omega\vert_{U})$ of type $(G,H)$ is isomorphic to an open subset of the correspondence space $\mathcal{C}(M)$. In the case $P/H$ is connected this Cartan geometry on $M$ is unique.

For example, if we take $G$  the M\"{o}bius, $P$ the Poincar\'{e} conformal group and $H\subset P$ as in Section 2 and consider the Cartan geometries $G\to G/H$ and $G\to G/P$ both endowed with the Maurer-Cartan form of the Lie group $G$. Then, the correspondence space for $H\subset P$ is $\mathcal{C}(\S^{m})=\mathcal{N}^{m+1}$. The diagram (\ref{corres}) reads as follows
$$
\begin{array}{rcl}
   & G & \\  \swarrow   &  & \searrow \\ \mathcal{N}^{m+1}& \stackrel{\pi }{\longrightarrow} & \,\, \,\,\S^{m}
\end{array}
$$

\subsection{The Maurer-Cartan form of $G$}

From now on, the groups $G, P$ and $H$ are the M\"obius group, the Poincar\'{e} conformal group and $H$ corresponds to the elements of $P$ with $\lambda=\pm 1$ as in section 2.

We end this section taking a look on the Maurer-Cartan form $\omega\in \Omega^{1}(G, \mathfrak{g})$ of the M\"obius group $G$.
We will focus on the geometric description of $\omega$ in terms of the lightlike homogeneous model manifold $\mathcal{N}^{m+1}$. This point of view will lead us to the construction for more general lightlike manifolds (Def. (\ref{52})) in Sections 5 and 6. 

\smallskip

Let $p:G\to G/H =\mathcal{N}^{m+1}$ be the projection. For every $[\sigma]=[v\vert e_{1}\vert\cdots \vert e_{m}\vert y ]\in G$ ($[\sigma]$ is written as column vectors), we have $p[\sigma]=v^{+}\in \mathcal{N}^{m+1}$.
If we write $\tau: O(m+1,1)\to G$ for the projection $\sigma \mapsto [\sigma]$, then we get
$$
T_{[\sigma]}G= \Big\{ T_{\sigma}\tau \cdot \xi:\xi\in \mathcal{M}_{m+2}(\R), \,\, \xi^{t}S\sigma +\sigma^{t}S\xi=0 \Big\}
$$
and
$T_{[\sigma]}p\cdot T_{\sigma}\tau\cdot \xi = \xi_{0}\in T_{v^{+}}\mathcal{N}^{m+1}$, where $\xi=(\xi_{0}\vert \xi_{1}\vert  \cdots \vert \xi_{m}\vert \xi_{m+1})$.
The Maurer-Cartan $\omega$ of the Lie group $G$ is given by
\begin{equation}\label{MCG}
\omega[\sigma]: T_{[\sigma]}G \to \mathfrak{g}, \quad  T_{\sigma}\tau \cdot \xi \mapsto \sigma^{-1}\xi=S \sigma^{t}S\xi.
\end{equation}
For each $0\leq j \leq m+1$, the $j$th-column of $\omega[\sigma](T_{\sigma}\tau \cdot \xi )$ are the coordinates of $\xi_{j}$ with respect to the basis $(v^{+}, e_{1}, \cdots , e_{m}, y)$ where $(v^{+}\vert e_{1}\vert \cdots \vert e_{m}\vert y )\in [\sigma]$.

Following \cite[Exercise 3.21]{Sharpe} and as a consequence of item $iii)$ in Lemma \ref{lemalgebra},
the Lie group $G$ is identified with an $H$-principal fiber subbundle of the principal fiber bundle of the linear frames $L(\mathcal{N}^{m+1})$ as follows. Let us fix the basis 
\begin{equation}\label{150120}
\left((1,0)^{t}, (0,\bar{e}_{1})^{t}, \cdots ,(0,\bar{e}_{n})^{t}\right)
\end{equation}
for $\mathfrak{g}/\mathfrak{h}\cong  \R \times \R^{m}$ where $(\bar{e}_{1},\cdots, \bar{e}_{m})$ is the canonical basis of $\R^{m}$. The admisible frames at $v^{+}\in \mathcal{N}^{m+1}$ are 
$$
\left(\phi_{[\sigma]}^{-1}(\bar{e}_{0}),\cdots, \phi_{[\sigma]}^{-1}(\bar{e}_{m})\right),\,\,\textrm{ for all }[\sigma]\in G \textrm{ with }p[\sigma]=v^{+}.$$
The linear isomorphisms $
\phi_{[\sigma]}:T_{v^{+}}\mathcal{N}^{m+1} \to \mathfrak{g}/\mathfrak{h}
$ are given in (\ref{isomor}).
Now, it is easy to check from (\ref{MCG}) that 
$$
u:G\to L(\mathcal{N}^{m+1}), \quad [\sigma]\mapsto u[\sigma]=(v^{+}, e_{1},\cdots ,e_{m} )\subset L_{v^{+}} (\mathcal{N}^{m+1}),
$$
where $(v^{+}\vert e_{1}\vert \cdots \vert e_{m}\vert y )\in [\sigma]$ is a fiber bundle morphism.

\smallskip

Let $\gamma\in \Omega^{1}(\L^{m+2}\times O(m+1,1),\mathfrak{g})$ be the (Ehresmann) principal connection corresponding to the Levi-Civita connection $\nabla^{0}$ of $\L^{m+2}$. Thus, we have 
$$
\gamma(v,\sigma):T_{(v,\sigma)}\left(\L^{m+2}\times O(m+1,1)\right), \quad (x,\xi)\mapsto \omega'(\xi),
$$
where $\omega'$ is the Maurer-Cartan form of $O(m+1,1)$. 
The inclusion $\psi: \mathcal{N}^{m+1} \hookrightarrow \L^{m+2}$ is lifted to 
$$\Psi:G\to \L^{m+2}\times O(m+1,1),\quad
\Psi(v^{+}, e_{1},\cdots , e_{m})=\left( v^{+}, (v^{+}, e_{1},\cdots , e_{m}, y) \right).$$ Here $y\in \L^{m+2}$ is determined by the condition $ (v^{+}, e_{1},\cdots , e_{m}, y) \in O(m+1,1)$ and the Lie group $G$ is identified with a subset of $L(\mathcal{N}^{m+1})$ by means of the fiber bundle morphism $u$.
Then, the Maurer-Cartan form $\omega$ of $G$ satisfies
$
\omega=\Psi^{*}(\gamma).
$

\smallskip
Let us denote by $-\overline{\nabla}^{0}\mathcal{Z}$ is the Weingarten endomorphism corresponding to the normal vector field $\mathcal{Z}$
along the inclusion $\mathcal{N}^{m+1}\subset \L^{m+2}$.
We know that $\overline{\nabla}^{0}\mathcal{Z}$ is the identity map on every $T_{v}\mathcal{N}^{m+1}$.
Thus, taking into account that $
\omega=\Psi^{*}(\gamma),
$
we can write
$$
\big[\omega[\sigma](T_{\sigma}\tau \cdot \xi)\big]_{\mathfrak{z}(\mathfrak{g}_{0})}=\langle \overline{\nabla}^{0}_{\xi_{0}}\mathcal{Z}, y \rangle \, \, \textrm{ and } \big[\omega[\sigma](T_{\sigma}\tau \cdot \xi)\big]_{-1}=\left(\langle \overline{\nabla}^{0}_{\xi_{0}}\mathcal{Z}, e_{1}\rangle , \cdots ,  \langle \overline{\nabla}^{0}_{\xi_{0}}\mathcal{Z}, e_{m}\rangle \right)^{t},
$$
where the subscripts denote the corresponding projection from $\mathfrak{g}$ on $\mathfrak{z}(\mathfrak{g}_{0})$ and $\mathfrak{g}_{-1}$ (compare with Lemma \ref{232}).
On the other hand, the resulting equality $\mathcal{L}_{\mathcal{Z}}h=2h$, where $\mathcal{L}$ is the Lie derivative, implies that the future lightlike cone is generic with expansion function $\lambda=1$, in the terminology of Section 5. These properties will lead Section 5 for lightlike hypersurfaces.

\smallskip

In section 6, our approach will be intrinsic. That is, a Lorentzian manifold, which contains the lightlike manifold as hypersurface, cannot be a priori given. Nevertheless, under certain assumptions on the lightlike manifold, an ambient Lorentzian manifold will be constructed. Let us explain how it works for  the future lightlike cone $\mathcal{N}^{m+1}$. Thus, we show how the Lorentz-Minkowski spacetime $\L^{m+2}$ can be constructed from a section of the trivial $\R_{>0}-$principal fiber bundle given in (\ref{pro}). This is a very special case of the Fefferman-Graham ambient metric construction \cite{F-G}.
In fact, the Lorentz-Minkowski spacetime $\L^{m+2}$  essentially is the Fefferman-Graham ambient space for the conformal sphere $(\S^{m}, [g_{_{\S^{m}}}])$ as follows.
Fix the section $\Gamma:\S^{m}\to \mathcal{N}^{m+1}$ given by $x\mapsto (1,x)$.
This section produces the global trivialization $i$ given in (\ref{i}) and $\Gamma^{*}(h)=g_{_{\S^{m}}}$. Now, for sufficiently small $\varepsilon>0$, the product manifold ${\bf M}=(-\varepsilon, +\varepsilon) \times \S^{m} \times \R_{>0}\cong (-\varepsilon, +\varepsilon) \times \mathcal{N}^{m+1}$ is endowed with the Lorentzian metric  \cite[Formula 7.12]{F-G}
\begin{equation}\label{29122}
g_{(\rho, x, s)}=ds\otimes d(\rho s)+d(\rho s)\otimes ds +s^{2}(1+\rho/2)^2g_{_{\S^{m}}}.
\end{equation}
The Lorentz metric $g$ satisfies $g(\partial_{s}, \partial_{s})=2\rho$, $g(\partial_{\rho}, \partial_{\rho})=0$ and $g(\partial_{s} \partial_{\rho})=s$ and $g$ is Ricci flat. The future lightlike cone $\mathcal{N}^{m+1}$ is isometric to the hypersurface in ${\bf M}$ at $\rho=0$ and the metric $g$ is also realized as the induced metric from the immersion
$$
\alpha:{\bf M} \to \L^{m+2}, \quad (\rho ,x,s)\mapsto \Big((1-\rho/2) s,
(1+\rho/2)sx)\Big).
$$
This point of view will be adopted in Section 6.

\section{Lightlike Cartan geometries}
\begin{definition}\label{52}
A lightlike manifold is a pair $(N^{m+1},h)$ where $N$ is a $(m+1)$-dimensional smooth manifold with $m\geq 2$ and furnished with a lightlike metric $h$. That is, $h$ is a symmetric $(0,2)$ tensor field on $N$ such that
\begin{enumerate}
\item $h(u,u)\geq 0$ for all $u\in T_{y}N$. 
\item $\mathrm{Rad}(h_{y})=\{u\in T_{y}N: h(u,-)=0\}$ defines a $1$-dimensional distribution on $N$.
\end{enumerate}
\end{definition}

\noindent The lightlike metric $h$ induces a bundle-like Riemannian metric $\overline{h}$ on the vector fiber bundle $\mathcal{E}:=TN/\mathrm{Rad}(h)$ as follows $\overline{h}([u],[v])=h(u,v)$ for all $u,v \in T_{y}N$, here the brackets denote the classes in the quotient space $T_{y}N/\mathrm{Rad}(h_{y})$.
The radical distribution $\mathrm{Rad}(h)$ is said to be orientable when there exists a vector field $Z\in \mathfrak{X}(N)$ which globally spans the radical distribution $\mathrm{Rad}(h)$ and we write $(N,h,Z)$ to fix a such vector field $Z$.
In this case, the Lie derivative $\mathcal{L}_{Z}h$ provides a well-defined bilinear map on each $T_{y}N/\mathrm{Rad}(h_{y})$, denotes by
$\mathcal{L}_{Z} \overline{h}$ and given by $\mathcal{L}_{Z} \overline{h}([u],[v])=\mathcal{L}_{Z}h(u,v).$ The bundle-like metric $\overline{h}$ determines a morphism $A_{Z}$ on the vector fiber bundle $\mathcal{E}$ by the condition 
\begin{equation}\label{211}
\mathcal{L}_{Z} \overline{h}([u],[v])=2\overline{h}(A_{Z}[u],[v])
\end{equation} 
for all $u,v \in T_{y}N$. Following the terminology in \cite{BZ17}, the lightlike manifold $(N,h,Z)$ is said to be generic when $A_{Z}$ is an isomorphism on $\mathcal{E}$. The genericity condition is independent of the choice of $Z$ orienting $\mathrm{Rad}(h)$.

Every isomorphism $D$ of the vector fiber bundle $\mathcal{E}$ on the identity of $N$ produces the new lightlike metric $h_{_{D}}(u, v):= \overline{h}(D[u], D[v])$. The lightlike metrics $h$ and $h_{_{D}}$ shares the radical distribution. Under the genericity condition on $(N,h,Z)$, the lightlike metric
\begin{equation}\label{210120}
h_{A_{Z}^{-1}}(u, v)= \overline{h}(A_{Z}^{-1}[u], A_{Z}^{-1}[v])
\end{equation}
 will have a relevant role in Theorem \ref{main1} .
\smallskip

Let us denote by $\mathrm{Iso}(N,h,Z)=\{f\in \mathrm{Diff}(N): f^{*}(h)=h\,\, \mathrm{ and } \,\,Tf\cdot Z=Z\circ f\}$. Taking into account that $T_{y}f\cdot Z(y)=Z(f(y))$ for all $y\in N$, the differential map $Tf$ induces a vector fiber bundle morphism $\overline{Tf}:\mathcal{E}\to \mathcal{E}$ on the map $f:N \to N$. We have $
f\circ \mathrm{Fl}^{Z}_{\varepsilon}=\mathrm{Fl}^{Z}_{\varepsilon} \circ f
$
for small enough $\varepsilon >0$, where $\mathrm{Fl}^Z$ denotes the flow of the vector field $Z$. An easy computation shows that
$$
f^{*}(\mathcal{L}_{Z}\overline{h})([v], [w])=\frac{d}{d\varepsilon }\mid_{\varepsilon =0}\Big(h(T_{f(y)}\mathrm{Fl}^{Z}_{\varepsilon}\cdot T_{y} f\cdot v,T_{f(y)}\mathrm{Fl}^{Z}_{\varepsilon}\cdot T_{y} f\cdot w)\Big)=
$$
$$\frac{d}{d\varepsilon }\mid_{\varepsilon =0}\Big(h( T_{y}\mathrm{Fl}^{Z}_{\varepsilon}\cdot v,T_{y}\mathrm{Fl}^{Z}_{\varepsilon}\cdot w)\Big)=
2\overline{h}(A_{Z}[v],[w]).
$$
That is, we get $\overline{Tf}\circ A_{Z}=A_{Z}\circ \overline{Tf}$ and therefore, for generic  lightlike manifolds $(N,h,Z)$, we have $\mathrm{Iso}(N,h,Z)= \mathrm{Iso}(N,h_{A_{Z}^{-1}},Z)$.

\begin{definition}\label{82}
A local twistor space for $(N,h,Z)$ is a smooth manifold $M$ together with an open subset $U\subset N$ and a surjective submersion $\pi:U\to M$ such that $\mathrm{Ker}(T_{y}\pi)=\mathrm{Rad}(h_{y})$ for all $y\in U$ (compare with \cite[Def. 1.5.14]{CS09}).
\end{definition} 

\begin{remark}\label{291}
{\rm Since $\mathrm{Rad}(h)$ is a $1$-dimensional distribution such local twistor spaces $\pi :U\to M$ always exist. We can pick
$(U,\phi=(s, r_{1},\cdots , r_{m}))$ a cubic coordinate system with $\phi(U)=(-\varepsilon, +\varepsilon)^{m+1}$ such that $Z\mid_{U}=\frac{\partial}{\partial s}$ and $h=\sum_{i,j=1}^{m}h_{ij}\,dr_{i}\, \otimes dr_{j}$. 
In this case,  the generic condition reads as $\mathrm{det}(\frac{\partial h_{i\, j}}{\partial s}\mid_{1\leq i,j\leq m})\neq 0$.
For $M=(-\varepsilon, +\varepsilon)^{m}$, we have $(-\varepsilon, +\varepsilon) \times M= U$ and every section $\Gamma : M \to U$ endows $M$ with the Riemannian metric $\Gamma^{*}(h)$. 
In the terminology of \cite{BZ17}, every local twistor space can be view as a generalized conformal structure on the manifold $M$. 
The generic condition holds when $\mathcal{L}_{Z}h=2 \tau h$ for $\tau$
a nowhere vanishing smooth function on $N$. In this case, we have $A_{Z}=\tau \,\mathrm{Id}_{\mathcal{E}}$ and $h$ has the following local expression
$$
h=\exp\left(2\int \tau \,ds \right)\sum_{i,j=1}^{m}\,C_{ij}(r_{1},\cdots , r_{m})\,dr_{i}\, \otimes dr_{j}.
$$
}
\end{remark}

\smallskip

Let $(N,h,Z)$ be a lightlike manifold. The set $Q\subset L(N)$ of all admissible linear frames of $N$ is defined as
\begin{equation}\label{laQ}
Q=\big\{b=(Z(y),e_{1}, \cdots , e_{m})\in L_{y}(N): y\in N, \,\, h(e_{i}, e_{j})=\delta_{ij}\big\}.
\end{equation}
Thus, $Q$ is a reduction of the structure group $Gl(m+1, \R)$ of $L(N)$ to the Lie group $H$.
Explicitly, we have
\begin{equation}\label{D201}
b\cdot [\sigma]=\left(Z(y), \sum_{i=1}^{m}g_{i1}(-w_{i}Z(y)+e_{i}),\cdots , \sum_{i=1}^{m}g_{im}(-w_{i}Z(y)+e_{i})\right),
\end{equation}
where $[\sigma]=\tiny{\left[\begin{array}{ccc} 1 & -  w^{t}g & -\frac{1}{2}\| w^{t}\|^2\\ 0 & g & w \\ 0 & 0 & 1 \end{array} \right]\in H}.
$

\subsection{Cartan geometries with model $(G,H)$}

Recall that
$G=PO(m+1,1)$ denotes the M\"{o}bius  group and 
$$
H=\left\{\left[\begin{array}{ccc} 1 & -w^{t}g & -\frac{1}{2}\| w^{t}\|^2 \\ 0 & g& w \\ 0 & 0 & 1 \end{array} \right]: \,\, w\in \R^{m},\,\, g\in O(m)\right\}.
$$
The quotient vector space $\mathfrak{g}/ \mathfrak{h}$ is identified with $\R \times \R^{m}$ as in (\ref{151}) and furnished with the lightlike metric
$$q\Big(\left(
\begin{array}{c}
a \\ X
\end{array}
\right), \left(
\begin{array}{c}
b \\ Y
\end{array}
\right)\Big)=g_{_{\R^{m}}}( X , Y).$$
The constant vector field $(1,0)^{t}\in \mathfrak{X}(\mathfrak{g}/ \mathfrak{h})$ spans the radical distribution $\mathrm{Rad}(q)$. Moreover, it follows for all $[\sigma]\in H$ (Lemma \ref{lemalgebra} $ii)$) 
$$
q\Big(\underline{\mathrm{Ad}}[\sigma]\left(
\begin{array}{c}
a \\ X
\end{array}
\right),\underline{\mathrm{Ad}}[\sigma]\left(
\begin{array}{c}
b \\ Y
\end{array}
\right) \Big)=q\Big(\left(
\begin{array}{c}
a \\ X
\end{array}
\right), \left(
\begin{array}{c}
b \\ Y
\end{array}
\right)\Big) \,\, \text{and}\,\,
\underline{\mathrm{Ad}}[\sigma]\left(
\begin{array}{c}
1 \\ 0
\end{array}
\right)=\left(
\begin{array}{c}
1 \\ 0
\end{array}
\right).
$$
 \begin{theorem}\label{direct}
Every Cartan geometry $(p:\mathcal{P}\to N^{m+1}, \omega)$ of type $(G,H)$ determines a lightlike metric $h^{\omega}$ on the base manifold $N$ and a vector field $Z^{\omega}\in \mathfrak{X}(N)$ that globally generates the distribution $\mathrm{Rad}(h^{\omega})$.  \end{theorem}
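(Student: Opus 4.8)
The plan is to obtain both $h^{\omega}$ and $Z^{\omega}$ by transporting the invariant objects $q$ and $(1,0)^{t}$ on $\mathfrak{g}/\mathfrak{h}$ to $N$ through the canonical fiberwise isomorphisms $\phi_{u}:T_{x}N\to\mathfrak{g}/\mathfrak{h}$ of diagram (\ref{isomor}). These satisfy the equivariance $\phi_{u\sigma}=\underline{\mathrm{Ad}}(\sigma^{-1})\circ\phi_{u}$ for every $\sigma\in H$, which is precisely what allows $\underline{\mathrm{Ad}}(H)$-invariant data on $\mathfrak{g}/\mathfrak{h}$ to descend to globally defined geometric objects on $N$ under the identification $TN\cong\mathcal{P}\times_{H}(\mathfrak{g}/\mathfrak{h})$ of (\ref{142}). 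The two identities recorded just above the statement, namely that $q$ is preserved by each $\underline{\mathrm{Ad}}[\sigma]$ and that $(1,0)^{t}$ is fixed by each $\underline{\mathrm{Ad}}[\sigma]$, are exactly the hypotheses that make this descent work.

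For the metric, I would fix $x\in N$, choose any $u\in\mathcal{P}$ with $p(u)=x$, and set
\begin{equation*}
h^{\omega}_{x}(v,w):=q\big(\phi_{u}(v),\phi_{u}(w)\big),\qquad v,w\in T_{x}N.
\end{equation*}
Independence of the choice of $u$ holds because any other point of the fiber is $u\sigma$ with $\sigma\in H$, and then $q(\phi_{u\sigma}(v),\phi_{u\sigma}(w))=q(\underline{\mathrm{Ad}}(\sigma^{-1})\phi_{u}(v),\underline{\mathrm{Ad}}(\sigma^{-1})\phi_{u}(w))=q(\phi_{u}(v),\phi_{u}(w))$ by the $\underline{\mathrm{Ad}}(H)$-invariance of $q$. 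Symmetry of $h^{\omega}$ is inherited from $q$. Writing $\phi_{u}(v)=(a,X)\in\R\times\R^{m}$, one has $h^{\omega}_{x}(v,v)=q((a,X),(a,X))=g_{\R^{m}}(X,X)\geq 0$, so condition $1$ of Definition \ref{52} holds. Since $\phi_{u}$ is a linear isomorphism and $\mathrm{Rad}(q)=\R\cdot(1,0)^{t}$ is one-dimensional, $\mathrm{Rad}(h^{\omega}_{x})=\phi_{u}^{-1}(\mathrm{Rad}(q))$ is a line at every point, giving condition $2$; hence $h^{\omega}$ is a lightlike metric.

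For the radical vector field I would define
\begin{equation*}
Z^{\omega}(x):=\phi_{u}^{-1}\big((1,0)^{t}\big),\qquad p(u)=x.
\end{equation*}
Here the relevant computation is $\phi_{u\sigma}^{-1}=\phi_{u}^{-1}\circ\underline{\mathrm{Ad}}(\sigma)$, whence $\phi_{u\sigma}^{-1}((1,0)^{t})=\phi_{u}^{-1}(\underline{\mathrm{Ad}}(\sigma)(1,0)^{t})=\phi_{u}^{-1}((1,0)^{t})$ by the fixedness of $(1,0)^{t}$; thus $Z^{\omega}$ does not depend on $u$ and is globally defined and nowhere vanishing. Because $(1,0)^{t}$ spans $\mathrm{Rad}(q)$, its image $Z^{\omega}(x)$ spans $\mathrm{Rad}(h^{\omega}_{x})$, so $Z^{\omega}$ globally generates the radical distribution.

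Smoothness of both objects is not a genuine obstacle: over a trivializing neighborhood one picks a smooth local section of $p:\mathcal{P}\to N$, and the displayed formulas become smooth expressions in $\omega$ and this section. Indeed, the argument is conceptual rather than computational, and the only point requiring verification is the well-definedness in the fiber, which is handled entirely by the two invariance identities established just before the theorem. The essential message is that the associated-bundle formalism of Section 3 carries $\underline{\mathrm{Ad}}(H)$-invariant data on the model $\mathfrak{g}/\mathfrak{h}$ to canonical structures on $N$, and $q$ together with $(1,0)^{t}$ qualify as such data.
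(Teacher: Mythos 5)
Your proposal is correct and takes essentially the same approach as the paper's proof: both transport the $\underline{\mathrm{Ad}}(H)$-invariant form $q$ and the fixed vector $(1,0)^{t}$ through the isomorphisms $\phi_{b}$ of (\ref{isomor}), with well-definedness on fibers secured by exactly the two invariance identities stated before the theorem. The only cosmetic difference is in the smoothness check: the paper uses the identities $h^{\omega}(T_{b}p\cdot \xi, T_{b}p\cdot \eta)=q(\omega(b)(\xi)+\mathfrak{h}, \omega(b)(\eta)+\mathfrak{h})$ and $Z^{\omega}\circ p=Tp\circ \omega^{-1}(E)$ for the grading element $E$, while you argue via a smooth local section, which amounts to the same thing.
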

 \begin{proof}
Let us fix $b\in \mathcal{P}$ with $p(b)=y\in N$. We use the linear isomorphism
$\phi_{b}:T_{y}N \to \mathfrak{g}/\mathfrak{h}$ given in (\ref{isomor}) to transport the bilinear form $q$ to the bilinear form $h^{\omega}_{b}$ on $T_{y}N$ defined by $h^{\omega}_{b}(v,w)=q(\phi_{b}(v), \phi_{b}(w))$. For all $[\sigma]\in H$, we get 
$$
q(\phi_{b}(v), \phi_{b}(w))=q(\underline{\mathrm{Ad}}[\sigma^{-1}]\phi_{b}(v) , \underline{\mathrm{Ad}}[\sigma^{-1}]\phi_{b}(w) )=q(\phi_{b\sigma}(v), \phi_{b\sigma}(w)).
$$
Thus, $h^{\omega}_{b}$ does not depend on the choice of $b\in p^{-1}(y)$ and we write $h^{\omega}$. 
From the commutative diagram (\ref{isomor}), we have that 
\begin{equation}\label{233}
q(\omega (b)(\xi)+\mathfrak{h}, \omega (b)(\eta)+\mathfrak{h})=h^{\omega} (T_{b}p\cdot \xi , T_{b}p\cdot \eta).
\end{equation}
Taking into account that $p$ is a submersion, we get that $h^{\omega}$ is a smooth tensor and therefore a lightlike metric on $N$.
The linear isomorphism 
$\phi_{b}$ also transports the vector field $(1,0)^{t}\in \mathfrak{X}(\mathfrak{g}/ \mathfrak{h})$ to $\mathfrak{X}(N)$ as follows
$Z^{\omega}_{b}(y)=\phi_{b}^{-1}(1,0)^{t}\in T_{y}N.$
In fact, we have for all $[\sigma]\in H$,
$$
Z^{\omega}_{b}(y)=\phi_{b}^{-1} \left(
\begin{array}{c}
1 \\ 0
\end{array}
\right)=\phi_{b}^{-1}\underline{\mathrm{Ad}}[\sigma] \left(
\begin{array}{c}
1 \\ 0
\end{array}
\right)=(\phi_{b[\sigma]})^{-1} \left(
\begin{array}{c}
1 \\ 0
\end{array}
\right)=Z^{\omega}_{b[\sigma]}(y)
$$
and $Z^{\omega}_{b}(y)$ does not depend on the choice of $b\in p^{-1}(y)$ and we write $Z^{\omega}$. Since $p$ is a submersion, to check that $Z^{\omega}\in \mathfrak{X}(N)$, we only need to show that $Z^{\omega}\circ p:\mathcal{P}\to TN$ is smooth. According to the definition, we have
\begin{equation}\label{234}
Z^{\omega}(p(b))=T_{b}p\cdot \omega (b)^{-1}(E),
\end{equation}
where $E=\tiny{\left(\begin{array}{ccc} 1 &  0 & 0\\ 0 & 0 & 0 \\ 0 & 0 & -1 \end{array} \right)}\in \mathfrak{g}$. That is $Z^{\omega}\circ p= Tp\circ \omega^{-1}(E)$, for $\omega^{-1}(E)\in \mathfrak{X}(\mathcal{P})$ the constant vector field corresponding to $E$. Therefore,  $Z^{\omega}$ is a smooth vector field on $N$ and clearly spans the radical distribution of $h^{\omega}$.
\end{proof}

\begin{definition}\label{9123}
A Cartan geometry $(p:\mathcal{P}\to N, \omega)$ modeled on $(G,H)$ is said to be a lightlike Cartan geometry on $N$.
\end{definition}
\noindent Given an isomorphism $(F,f)$ of lightlike Cartan geometries $(p:\mathcal{P}\to N, \omega)$ and $(p':\mathcal{P}'\to N', \omega')$, it is easily deduced that $f^{*}(h^{\omega '})=h^{\omega}$ and $Tf\cdot Z^{\omega}=Z^{\omega '}\circ f$. The converse is not true,  in general  (see Corollary \ref{27121} and Example \ref{235}).

\begin{corollary}\label{140120}
Let $(\pi:\mathcal{P}\to M, \omega)$ be a Cartan geometry with model $(G,P)$ and consider the correspondence space 
$\mathcal{C}(M)=\mathcal{P}/H$ for $H\subset P$.
Then, $(p:\mathcal{P}\to \mathcal{C}(M), \omega)$  is a lightlike Cartan geometry on $\mathcal{C}(M)$.
\end{corollary}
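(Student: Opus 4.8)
The plan is to deduce this almost directly from the general correspondence space construction recalled in Subsection 3.1, specialized to the present $G$, $P$ and $H$, together with Definition \ref{9123}. The corollary is essentially a bookkeeping statement: the substantive work is already contained in the correspondence space theorem and in Theorem \ref{direct}, and what remains is to observe that the resulting object lands in the class named in Definition \ref{9123}.

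First I would verify that the hypotheses of the correspondence space construction are met. By the discussion in Section 2, $H$ is a closed (indeed normal) subgroup of the Poincar\'{e} conformal group $P$, so $H\subset P\subset G$ are closed subgroups and $\mathcal{C}(M)=\mathcal{P}/H$ is a well-defined smooth manifold carrying the fiber bundle $\pi:\mathcal{C}(M)\to M$ with fiber $P/H\cong \R_{>0}$. This closedness is precisely what legitimizes forming the quotient and is the one point meriting explicit care.

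The core step is then to invoke \cite[Prop. 1.5.13]{CS09}, as recalled in Subsection 3.1: the same Cartan connection $\omega\in\Omega^{1}(\mathcal{P},\mathfrak{g})$, now regarded on the principal $H$-bundle $p:\mathcal{P}\to\mathcal{C}(M)$ given by the quotient projection $\mathcal{P}\to\mathcal{P}/H$, satisfies the three defining axioms of a Cartan connection of type $(G,H)$. Concretely, $\omega(b)$ remains a linear isomorphism $T_{b}\mathcal{P}\to\mathfrak{g}$, which is unchanged by the passage from $P$ to $H$; the equivariance $(r^{\sigma})^{*}\omega=\mathrm{Ad}(\sigma^{-1})\circ\omega$ for $\sigma\in H$ is inherited from the corresponding equivariance for all $\sigma\in P$; and the reproduction property $\omega(\xi_{X})=X$ for $X\in\mathfrak{h}$ holds because $\mathfrak{h}\subset\mathfrak{p}$ and the fundamental vector field $\xi_{X}$ for $H$ coincides with the one for $P$. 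Hence $(p:\mathcal{P}\to\mathcal{C}(M),\omega)$ is a Cartan geometry of type $(G,H)$.

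Finally, since $G=PO(m+1,1)$ is the M\"{o}bius group and $H$ is the subgroup fixed in Section 2, a Cartan geometry of type $(G,H)$ is by Definition \ref{9123} precisely a lightlike Cartan geometry, which yields the claim. I do not expect a genuine obstacle: every nontrivial assertion is supplied by the cited correspondence space result, and the corollary merely records that this construction produces a lightlike Cartan geometry on $\mathcal{C}(M)$ (and, via Theorem \ref{direct}, an associated lightlike metric $h^{\omega}$ and spanning field $Z^{\omega}$).
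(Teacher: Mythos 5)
Your proposal is correct and matches the paper's (implicit) argument exactly: the paper states this corollary without proof precisely because it is the direct specialization of \cite[Prop.~1.5.13]{CS09}, as recalled in Subsection~3.1, to the M\"{o}bius group data $H\subset P\subset G$ fixed in Section~2, followed by an appeal to Definition~\ref{9123}. Your additional verification that $H$ is closed in $P$ and that the three Cartan connection axioms restrict correctly from $P$ to $H$ is sound and simply makes explicit what the cited result already guarantees.
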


\begin{remark}\label{292}
{\rm A  Cartan geometry $(\pi:\mathcal{P}\to M, \omega)$ with model $(G,P)$ determines a Riemannian conformal structure $[g]$ on the base manifold $M$ (see for instance \cite[Lemma 7.2.3]{Sharpe}). Once we fix a representative $g\in [g]$, we obtain a global trivialization of $\mathcal{C}(M)\cong M\times \R_{>0}$. In terms of this identification, the lightlike metric in Corollary \ref{140120} reads as $h^{\omega}=s^{2}g\oplus 0$ and the radical distribution is spanned by $Z^{\omega}=s \partial_{s}$, \cite{F-G}. Fefferman and Graham call this lightlike metric $h^{\omega}$ as the tautological symmetric $2$-tensor \cite{F-G}. The correspondence space $\mathcal{C}(M)$ can also be identified with the fiber bundle of scales of the conformal manifold $(M, [g])$ \cite[Section 1.6.5]{CS09}. As well as the case of the future lightlike cone, we have $\mathcal{L}_{s \partial_{s}}h^{\omega}=2 h^{\omega}$ and so $(\mathcal{C}(M), h^{\omega}, s\partial_{s})$ is generic.
}
\end{remark}
\begin{remark}
{\rm On other matters, Liouville type theorem for lightlike geometry \cite{BFZ} seems to indicate that lightlike Cartan geometries on $3$-dimensional manifolds play a special role. Let us recall that a M\"{o}bius structure on a conformal Riemannian surface $(M, [g])$ is a map
$$
[g]\to \mathcal{T}_{(0,2}(M),\quad  g\mapsto P^{g}
$$
such that $P^{g}$ is symmetric, $\mathrm{trace}_{g}P^{g}= K^{g}$, where $K^{g}$ denotes the Gauss curvature of $g$, and the following conformal transformation law is satisfied
$
P^{e^{2\sigma}g}=P^{g}-\frac{1}{2}\| \nabla^{g} \sigma \|_{g}^{2}g- \mathrm{Hess}^{g}(\sigma)+ d\sigma\otimes d\sigma,
$ \cite{Ran}.
The M\"{o}bius structure plays the role of the Schouten tensor for dimension $m=2$. Although may be interesting to relate $3$-dimensional lightlike manifolds with M\"{o}bius structures on conformal Riemannian surfaces, this is out of the scope of this paper. For example, a M\"{o}bius structure can be obtained from certain Weingarten endomorphism for spacelike surfaces through the future lightlike cone $\mathcal{N}^{3}$ (see details in \cite{PR13}).}
\end{remark}

Let
$(p:\mathcal{P}\to N, \omega)$ be a lightlike Cartan geometry on $N$ and $Q^{\omega}$ the set of admissible linear frames given in (\ref{laQ}) for the lightlike manifold $(N,h^{\omega}, Z^{\omega})$ in Theorem \ref{direct}. 
Following \cite[Exercise 3.21]{Sharpe} and Lemma \ref{lemalgebra} $iii)$, the total space $\mathcal{P}$ can be identified with the set of all admissible linear frames $Q^{\omega}$ by means of the following fiber bundle isomorphism 
\begin{equation}\label{236}
\mathcal{P}\to Q^{\omega},\quad b\mapsto \Big(Z^{\omega}(p(b)), \phi_{b}^{-1} \left(
\begin{array}{c}
0 \\ \bar{e}_{1}
\end{array}
\right), \cdots, \phi_{b}^{-1} \left(
\begin{array}{c}
0 \\ \bar{e}_{m}
\end{array}
\right)\Big)
\end{equation}
where  $\left((1,0)^{t}, (0,\bar{e}_{1})^{t}, \cdots ,(0,\bar{e}_{n})^{t}\right)$ is the fixed basis for 
$\mathfrak{g}/\mathfrak{h}\cong  \R \times \R^{m}$ in (\ref{150120}). Hence $Q^{\omega}$ inherits an action of $H$ on the right by means of (\ref{236}) as follows \cite[Exercise 3.21]{Sharpe}, 
$$
\Big(Z^{\omega}(p(b)), \phi_{b}^{-1} \left(
\begin{array}{c}
0 \\ \bar{e}_{1}
\end{array}
\right), \cdots, \phi_{b}^{-1} \left(
\begin{array}{c}
0 \\ \bar{e}_{m}
\end{array}
\right)\Big)\cdot [\sigma]=\Big(Z^{\omega}(p(b)), \phi_{b[\sigma]}^{-1} \left(
\begin{array}{c}
0 \\ \bar{e}_{1}
\end{array}
\right), \cdots, \phi_{b[\sigma]}^{-1} \left(
\begin{array}{c}
0 \\ \bar{e}_{m}
\end{array}
\right)\Big)
$$
for all $[\sigma] \in H$.
A direct computation from Lemma \ref{lemalgebra} shows that 
$$
\phi_{b[\sigma]}^{-1} \left(
\begin{array}{c}
0 \\ \bar{e}_{j}
\end{array}
\right)=\phi_{b}^{-1}\circ \underline{\mathrm{Ad}}[\sigma]\left(
\begin{array}{c}
0 \\ \bar{e}_{j}
\end{array}
\right)=\phi_{b}^{-1}\left(
\begin{array}{c}
-g_{_{\R^{m}}}(w, g\bar{e}_{j}) \\ g\bar{e}_{j}
\end{array}
\right)=\left( \sum_{i=1}^{m}g_{ij}\big(-w_{i}Z(p(b))+e_{i}\big)\right),$$
where $[\sigma]=\tiny{\left[\begin{array}{ccc} 1 & -  w^{t}g & -\frac{1}{2}\| w^{t}\|^2\\ 0 & g & w \\ 0 & 0 & 1 \end{array} \right]}\in H$ and $\phi_{b}^{-1} \left(
\begin{array}{c}
0 \\ \bar{e}_{i}
\end{array}
\right)=e_{i}$ for all $1\leq i\leq m$.
Therefore, (\ref{236}) defines a $H$-principal fiber bundle isomorphism with the action on $Q^{\omega}$ given in (\ref{D201}).
In the sequel, we omit this identification and write $(p:Q^{\omega}\to N, \omega)$ for a lightlike Cartan geometry on the manifold $N$. The Cartan connection is now view as  $\omega\in \Omega^{1}(Q^{\omega}, \mathfrak{g})$.

\bigskip

We end this Section looking for a description for the Lie group $\mathrm{Aut}(Q^{\omega}, \omega)$ when the base manifold $N$ is connected. To start with, let us recall the soldering form $\theta\in \Omega^{1}(L(N), \R^{m+1})$ 
given by $\theta (u)(\xi)=u^{-1}(T_{u}p \cdot \xi)$ for all $\xi \in T_{u}L(N)$. The soldering form $\theta$
is $Gl(m+1,\R)$-equivariant. That is, for all $u\in L(N)$, $ g\in Gl(m+1, \R)$ and $\xi\in T_{u}L(N)$ we have,
\begin{equation}\label{181}
\theta(u\cdot g)(T_{u}r^{g}\cdot \xi)=g^{-1}\circ \theta(u)(\xi),
\end{equation}
where $r^{g}$ is the action on the right by $g$ on $L(N)$. The soldering form $\theta$ is induced on $Q^{\omega}$ and 
\begin{equation}\label{182}
(\theta\mid_{\mathcal{Q^{\omega}}})(b)(\xi):=b^{-1}(T_{b}p \cdot \xi)=\mathrm{proj}\circ\omega(b)(\xi),
\end{equation}
where $\xi \in T_{b}Q^{\omega}$, $\mathrm{proj}: \mathfrak{g}\to \mathfrak{g}/\mathfrak{h}$ is the natural projection and $\mathfrak{g}/\mathfrak{h}\cong \R \times \R^{m}$ as in (\ref{151}). Therefore, the Cartan connection $\omega$ can be written as 
$$
\omega=(\theta\mid_{Q^{\omega}})\oplus \omega_{[\mathfrak{g}_{0}, \mathfrak{g}_{0}]}\oplus \omega_{1},
$$
where the subscripts denote the projections from $\mathfrak{g}$ on $[\mathfrak{g}_{0}, \mathfrak{g}_{0}]$ and $\mathfrak{g}_{1}$, respectively.

For every smooth map $f: N \to N$, we denote by $Tf$ the lift $Tf: L(N)\to L(N)$ deduced from the tangent map of $f$. That is $$Tf\cdot u:=(T_{y}f\cdot v_{1}, T_{y}f\cdot v_{2}, \cdots ,T_{y}f\cdot v_{m+1} ),$$ for $u=( v_{1}, \cdots , v_{m+1})$ a linear frame at $y\in N$.
The following argument is a slight variation of \cite[Remark 1.5.3]{CS09}, we include it here for the sake of completeness. 
Let $(F,f)$ be an automorphism of the lightlike Cartan geometry $(p:Q^{\omega}\to N, \omega)$.
The condition $F^{*}(\omega)=\omega$ and (\ref{182}) imply
 $F^{*}(\theta\mid_{Q^{\omega}})=\theta\mid_{Q^{\omega}}$.  Thus, for every $\xi \in T_{b}Q^{\omega}$ with $p(b)=y$, one has
 $$
b^{-1}(T_{b}p\cdot \xi)=\theta(b)(\xi)=\theta(F(b))(T_{b}F\cdot \xi)= F(b)^{-1}(T_{F(b)}p\cdot T_{b}F\cdot \xi)=F(b)^{-1}(T_{y}f\cdot T_{b}p\cdot \xi).
$$ 
Taking into account that $p$ is a submersion, we get $b^{-1}(v)=F(b)^{-1}(T_{y}f\cdot v)$ for all $v \in T_{y}N$
which implies $F=Tf$.

Let  $(p:Q^{\omega}\to N, \omega)$ be a lightlike Cartan geometry on a connected manifold $N$.
For every $(Tf, f)\in \mathrm{Aut}(Q^{\omega}, \omega)$, the condition $Tf:Q^{\omega}\to Q^{\omega}$ directly gives that $f\in \mathrm{Iso}(N,h^{\omega},Z^{\omega})$. In this way, we get the group homomorphism
\begin{equation}\label{18121}
\beta:\mathrm{Aut}(Q^{\omega}, \omega)\to \mathrm{Iso}(N,h^{\omega},Z^{\omega}), \quad (Tf,f)\mapsto f.
\end{equation}
\noindent The map $\beta$ is not an isomorphism, in general (see Example \ref{235}). 

\begin{remark}\label{21121}
{\rm The distribution $\mathcal{H}=\omega^{-1}\left(\mathfrak{g}_{-1}\oplus \mathfrak{z}(\mathfrak{g}_{0})\right)\subset TQ^{\omega}$ is complementary to the vertical distribution $\mathcal{V}$ of $p:Q^{\omega}\to N$. Nevertheless, $\mathcal{H}$ does not give a (Ehresmann) principal connection. In fact, for every $b\in Q^{\omega}$, $\xi \in \mathcal{H}(b)$ and $[\sigma]\in H$, we have
$$
\omega(b[\sigma])\left(T_{b}r^{[\sigma]}\cdot \xi\right)=\mathrm{Ad}[\sigma]^{-1}\left[\omega(b)(\xi)\right]\subset \mathrm{Ad}[\sigma]^{-1}\left(\mathfrak{g}_{-1}\oplus \mathfrak{z}(\mathfrak{g}_{0})\right).
$$
Therefore, $\mathcal{H}$ is a principal connection if and only if $\mathfrak{m}=\mathfrak{g}_{-1}\oplus \mathfrak{z}(\mathfrak{g}_{0})$ is a reductive complement of $\mathfrak{h}$, and we know from Lemma \ref{lemalgebra} that a such complement does not exist.
}
\end{remark}

The distribution $\mathcal{H}$  is a general connection on $p:Q^{\omega}\to N$ \cite[Sect. 1.3.2]{CS09}. 
Therefore, we can consider the horizontal lift $V^{\text{hor}}$ of a vector field $V\in \mathfrak{X}(N)$. 
For each $1\leq i\leq m$, let us consider the constant vector field $\omega^{-1}(E_{i})\in \mathfrak{X}(Q^{\omega})$ for $E_{i}\in \mathfrak{g}_{-1}$ given in (\ref{170320B}).
The distribution $\mathcal{H}$ is globally spanned by the set of constant vector fields $\{\omega^{-1}(E), \omega^{-1}(E_{1}), \cdots ,\omega^{-1}(E_{m})\}$. For every $y\in N$ and  $b=(Z(y),e_{1}, \cdots , e_{m})\in Q^{\omega}$, we have $T_{b}p\cdot \omega^{-1}(E_{j})(b)= e_{j}$ for $1\leq j\leq m$. On the other hand, we know from (\ref{234}) that  $T_{b}p\cdot \omega^{-1}(E)(b)= Z(y)$.

The curvature of the general connection $\mathcal{H}$ is defined by $R(\xi, \eta)=-\mathcal{V}[\mathcal{H}(\xi), \mathcal{H}(\eta)]$ for $\xi, \eta \in \mathfrak{X}(Q^{\omega})$, where $\mathcal{V}$ and $\mathcal{H}$ also denotes the projections according to the decomposition $TQ^{\omega}= \mathcal{V}\oplus \mathcal{H}$ \cite[Sect. 1.3.2]{CS09}. 
From the Frobenious Theorem, the condition $R=0$ codifies the integrability of the distribution $\mathcal{H}$. 
The complete information about $R$ is contained in $$R(\omega^{-1}(E_{i}), \omega^{-1}(E_{j}))=-\mathcal{V}[\omega^{-1}(E_{i}), \omega^{-1}(E_{j})], \quad 0\leq i,j \leq m,$$
where $E_{0}=E$ is the grading element. 

\begin{lemma}\label{160320B}
Let $(p:Q^{\omega}\to N, \omega)$ be a lightlike Cartan geometry. Then, for every $ f\in \mathrm{Iso}(N,h^{\omega}, Z^{\omega})$, the lift $F=Tf: Q^{\omega}\to Q^{\omega}$ preserves the distribution $\mathcal{H}$ if and only if 
\begin{equation}\label{160320A}
TF\cdot \omega^{-1}(E_{j})=\omega^{-1}(E_{j})\circ F, \quad 0\leq j\leq m.
\end{equation}
\end{lemma}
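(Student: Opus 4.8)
The plan is to exploit the soldering form $\theta|_{Q^{\omega}}=\mathrm{proj}\circ\omega$ of (\ref{182}) together with the naturality of $\theta$ under frame-bundle lifts. First I would record the structural facts underlying the argument. Since $\mathfrak{g}=\mathfrak{h}\oplus(\mathfrak{g}_{-1}\oplus\mathfrak{z}(\mathfrak{g}_{0}))$ as a vector space (even though, by Lemma \ref{lemalgebra}, this is not a reductive splitting), the projection $\mathrm{proj}:\mathfrak{g}\to\mathfrak{g}/\mathfrak{h}$ restricts to a linear isomorphism on $\mathfrak{g}_{-1}\oplus\mathfrak{z}(\mathfrak{g}_{0})$. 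Consequently, $\theta|_{Q^{\omega}}=\mathrm{proj}\circ\omega$ restricts at each $b$ to a linear isomorphism $\theta(b):\mathcal{H}(b)\to\mathfrak{g}/\mathfrak{h}$, because $\mathcal{H}(b)=\omega(b)^{-1}(\mathfrak{g}_{-1}\oplus\mathfrak{z}(\mathfrak{g}_{0}))$. Moreover, the images $\mathrm{proj}(E_{j})$ for $0\le j\le m$ are precisely the fixed basis $((1,0)^{t},(0,\bar{e}_{1})^{t},\cdots,(0,\bar{e}_{m})^{t})$ of $\mathfrak{g}/\mathfrak{h}$ chosen in (\ref{150120}), since the grading element $E=E_{0}$ projects to $(1,0)^{t}$ and each $E_{i}$ projects to $(0,\bar{e}_{i})^{t}$.

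Next I would observe that the lift $F=Tf$ preserves the soldering form, $F^{*}(\theta|_{Q^{\omega}})=\theta|_{Q^{\omega}}$. The crucial point is that this does \emph{not} require $F$ to preserve the full Cartan connection $\omega$ (which it need not, as $f$ is merely an element of $\mathrm{Iso}(N,h^{\omega},Z^{\omega})$); it follows from the naturality of $\theta$ under the lift of a diffeomorphism. Indeed, reading the computation preceding (\ref{18121}) in the converse direction, for $\xi\in T_{b}Q^{\omega}$ with $p(b)=y$ one has $\theta(F(b))(T_{b}F\cdot\xi)=F(b)^{-1}(T_{y}f\cdot T_{b}p\cdot\xi)=b^{-1}(T_{b}p\cdot\xi)=\theta(b)(\xi)$, where the middle equality holds because $F(b)=Tf\cdot b$ is the frame obtained by applying $T_{y}f$ to $b$, so that $F(b)^{-1}\circ T_{y}f=b^{-1}$.

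The easy (``if'') direction is then immediate: if (\ref{160320A}) holds, then $TF$ carries each spanning field $\omega^{-1}(E_{j})$ of $\mathcal{H}$ to $\omega^{-1}(E_{j})\circ F$, which is again a section of $\mathcal{H}$, so $F$ preserves $\mathcal{H}$. For the (``only if'') direction, assume $F$ preserves $\mathcal{H}$, and fix $b$ and $0\le j\le m$. Both $TF\cdot\omega^{-1}(E_{j})(b)$ and $\omega^{-1}(E_{j})(F(b))$ lie in $\mathcal{H}(F(b))$: the former because $\omega^{-1}(E_{j})(b)\in\mathcal{H}(b)$ and $F$ preserves $\mathcal{H}$, the latter by the definition of $\mathcal{H}$. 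I would then compare their images under the isomorphism $\theta(F(b))$. The second gives $\mathrm{proj}(\omega(\omega^{-1}(E_{j})(F(b))))=\mathrm{proj}(E_{j})$, while the first gives, using $F^{*}(\theta|_{Q^{\omega}})=\theta|_{Q^{\omega}}$, the value $\theta(b)(\omega^{-1}(E_{j})(b))=\mathrm{proj}(E_{j})$ as well. Since $\theta(F(b))$ is injective on $\mathcal{H}(F(b))$, the two tangent vectors coincide, which is exactly (\ref{160320A}).

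The only genuinely subtle step is this last direction, and specifically the recognition that $F=Tf$ preserves $\theta|_{Q^{\omega}}$ and that $\theta$ restricts to an isomorphism on $\mathcal{H}$, despite $F$ \emph{not} preserving the full Cartan connection $\omega$. This is precisely what lets one recover the vectorial identity (\ref{160320A}) from equality of the $\mathfrak{g}/\mathfrak{h}$-components alone, with no control whatsoever on the $[\mathfrak{g}_{0},\mathfrak{g}_{0}]\oplus\mathfrak{g}_{1}$-part of $\omega$; I expect the write-up to hinge entirely on making that reduction explicit.
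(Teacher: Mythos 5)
Your proof is correct and is essentially the paper's own argument in invariant dress: the paper pushes forward by $T_{b}p$, which carries the spanning fields $\omega^{-1}(E_{0}),\dots,\omega^{-1}(E_{m})$ to the frame vectors $(Z^{\omega}(y),e_{1},\dots,e_{m})$ and is injective on $\mathcal{H}(b)$, writes $T_{b}F\cdot\omega^{-1}(E_{j})(b)=\sum_{i}a_{ji}\,\omega^{-1}(E_{i})(F(b))$ and solves $T_{y}f\cdot e_{j}=\sum_{i}a_{ji}\,T_{y}f\cdot e_{i}$ to get $a_{ji}=\delta_{ji}$, while composing $T_{b}p$ with the frame isomorphism $F(b)^{-1}$ is precisely your soldering-form map $\theta(F(b))$ and its naturality $F^{*}(\theta\vert_{Q^{\omega}})=\theta\vert_{Q^{\omega}}$. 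In particular, your key observation that only the $\mathfrak{g}/\mathfrak{h}$-component is needed (with no control on the $[\mathfrak{g}_{0},\mathfrak{g}_{0}]\oplus\mathfrak{g}_{1}$-part, since $F^{*}\omega=\omega$ is not assumed) is the same mechanism as the paper's use of $p\circ F=f\circ p$ together with injectivity of $T_{y}f$.
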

\begin{proof}
Assume $F$ preserves the distribution $\mathcal{H}$. That is, for every $b\in Q^{\omega}$ with $p(b)=y$, there are real numbers $a_{ji}$ such that $$T_{b}F\cdot  \omega^{-1}(E_{j})(b)=\sum_{i=0}^{m}a_{ji}\, \omega^{-1}(E_{j})(F(b)).$$
This implies that $T_{y}f \cdot e_{j}=\sum_{i=0}^{m}a_{ji}\, T_{y}f \cdot e_{j}$ and so $a_{ji}=\delta_{ji}$. Therefore equation (\ref{160320A}) holds.
The other implication is obvious
\end{proof}

\begin{corollary}\label{27121}
Given a lightlike Cartan geometry $(p:Q^{\omega}\to N, \omega)$, we have
$$
\mathrm{Aut}(Q^{\omega},\omega)=\left\{f\in \mathrm{Iso}(N,h^{\omega}, Z^{\omega}): F=Tf \textrm{ preserves the distribution }\mathcal{H}\right\}.
$$
\end{corollary}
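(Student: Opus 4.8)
The plan is to prove the two set inclusions separately, in each case reducing the statement to the behaviour of $\omega$ on the two summands of the splitting $TQ^{\omega}=\mathcal{V}\oplus \mathcal{H}$. For the inclusion $\subseteq$, let $(F,f)\in \mathrm{Aut}(Q^{\omega},\omega)$. The discussion preceding $(\ref{18121})$ already forces $F=Tf$, and the homomorphism $\beta$ of $(\ref{18121})$ gives $f\in \mathrm{Iso}(N,h^{\omega},Z^{\omega})$, so only the condition that $Tf$ preserves $\mathcal{H}$ remains. But $\mathcal{H}(b)=\omega(b)^{-1}(\mathfrak{g}_{-1}\oplus \mathfrak{z}(\mathfrak{g}_{0}))$, and since $F^{*}\omega=\omega$ we get, for $\xi\in \mathcal{H}(b)$, that $\omega(F(b))(T_{b}F\cdot \xi)=\omega(b)(\xi)\in \mathfrak{g}_{-1}\oplus \mathfrak{z}(\mathfrak{g}_{0})$; hence $T_{b}F\cdot \xi\in \mathcal{H}(F(b))$, as required.

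For the reverse inclusion, take $f\in \mathrm{Iso}(N,h^{\omega},Z^{\omega})$ whose lift $F=Tf$ preserves $\mathcal{H}$. First I would verify that $F$ is a principal bundle automorphism of $Q^{\omega}$ covering $f$. Because $Tf\cdot Z^{\omega}=Z^{\omega}\circ f$ and $f^{*}h^{\omega}=h^{\omega}$, the lift sends an admissible frame $(Z^{\omega}(y),e_{1},\dots ,e_{m})$ to $(Z^{\omega}(f(y)),T_{y}f\cdot e_{1},\dots ,T_{y}f\cdot e_{m})$, which is again admissible; thus $F$ restricts to a diffeomorphism $Q^{\omega}\to Q^{\omega}$. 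Moreover the natural lift satisfies $Tf(u\cdot g)=Tf(u)\cdot g$, so $F$ is $H$-equivariant, i.e. $F\circ r^{[\sigma]}=r^{[\sigma]}\circ F$. It therefore only remains to prove $F^{*}\omega=\omega$, and then $(F,f)=(Tf,f)$ will be an automorphism.

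To establish $F^{*}\omega=\omega$ I would check the equality on each summand of $TQ^{\omega}=\mathcal{V}\oplus \mathcal{H}$. On the vertical distribution $\mathcal{V}$, spanned by the fundamental vector fields $\xi_{X}$ with $X\in \mathfrak{h}$, the $H$-equivariance of $F$ gives $TF\cdot \xi_{X}=\xi_{X}\circ F$, so axiom (c) of a Cartan connection yields $(F^{*}\omega)(\xi_{X})=\omega(\xi_{X})=X$. On the horizontal distribution $\mathcal{H}$, globally framed by the constant vector fields $\omega^{-1}(E_{0}),\dots ,\omega^{-1}(E_{m})$ with $E_{0}=E$, the hypothesis that $F$ preserves $\mathcal{H}$ lets me apply Lemma $\ref{160320B}$ to obtain $TF\cdot \omega^{-1}(E_{j})=\omega^{-1}(E_{j})\circ F$, whence $(F^{*}\omega)(\omega^{-1}(E_{j}))=E_{j}=\omega(\omega^{-1}(E_{j}))$ for $0\le j\le m$. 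Since $\mathcal{V}$ and $\mathcal{H}$ together span $TQ^{\omega}$, this gives $F^{*}\omega=\omega$.

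The only genuinely substantive step is the passage from the condition ``$Tf$ preserves $\mathcal{H}$'' to the intertwining relations $TF\cdot \omega^{-1}(E_{j})=\omega^{-1}(E_{j})\circ F$, which is exactly the content of Lemma $\ref{160320B}$; everything else is bookkeeping on a frame of $TQ^{\omega}$ adapted to the splitting. The point where care is needed is that $\mathcal{H}$ is only a general, non-principal connection (Remark $\ref{21121}$), so preservation of $\omega$ along horizontal directions is \emph{not} automatic from equivariance and must be supplied by Lemma $\ref{160320B}$.
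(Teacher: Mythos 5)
Your proposal is correct and takes essentially the same route as the paper: the forward inclusion via the discussion around \eqref{18121}, and the converse by checking $H$-equivariance of $F=Tf$, pulling back $\omega$ to $\omega$ on the vertical distribution through the fundamental vector fields, and then invoking Lemma \ref{160320B} to handle the horizontal part. The only difference is cosmetic: where the paper twice cites \cite[Lemma 1.5.2]{CS09} (automorphisms preserve all constant vector fields, and conversely an equivariant map intertwining the constant vector fields $\omega^{-1}(E_{j})$, $0\leq j\leq m$, spanning a complement of $\mathcal{V}$ is an automorphism), you inline these short computations yourself and conclude by linearity on the spanning set of $T_{b}Q^{\omega}=\mathcal{V}(b)\oplus\mathcal{H}(b)$, which is a valid and self-contained substitute.
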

\begin{proof}
For every $(Tf, f)\in \mathrm{Aut}(Q^{\omega},\omega)$, we know that $f\in \mathrm{Iso}(N,h^{\omega}, Z^{\omega})$ from (\ref{18121}).
On the other hand, \cite[Lemma 1.5.2]{CS09} implies that $F:=Tf$ preserves all constant vector fields, in particular $TF\cdot \omega^{-1}(Y)=\omega^{-1}(Y)\circ F$  for $Y\in \mathfrak{g}_{-1}\oplus \mathfrak{z}(\mathfrak{g}_{0})$ and $F$ preserves the distribution $\mathcal{H}$.
 Conversely, consider $f\in \mathrm{Iso}(N,h^{\omega},Z^{\omega})$, then it is clear that $F:=Tf:Q^{\omega}\to Q^{\omega}$ and $F(b\cdot \sigma)=F(b)\cdot \sigma$ for all $u\in Q^{\omega}$ and $\sigma\in H$. Hence, $F$ is a $H$-principal fiber bundle morphism. This gives $T_{b}F\cdot \xi_{X}(b)= \xi_{X}(F(b))$ for every $X\in \mathfrak{h}$ and so
$
\omega(F(b))(T_{b}F\cdot \xi_{X}(b))=\omega(b)(\xi_{X}(b))=X.
$
That is, $F$ pulls back $\omega $ to $\omega$ on the vertical distribution of $p:Q^{\omega}\to N$.
Now, the condition $TF\cdot \omega^{-1}(Y)=\omega^{-1}(Y)\circ F$ for every $Y\in \mathfrak{g}_{-1}\oplus \mathfrak{z}(\mathfrak{g}_{0})$ follows from Lemma \ref{160320B}
and thereofre, \cite[Lemma 1.5.2]{CS09} ends the proof.
\end{proof}

\noindent It would be desirable a description of the condition $F$ preserves the distribution $\mathcal{H}$ in terms of the lightlike manifold $(N, h^{\omega}, Z^{\omega})$.

\begin{example}\label{235}
{\rm 
There are lightlike manifolds $(N,h,Z)$ with $\mathrm{Iso}(N,h,Z)$ infinite-dimensional.
The following example is taken from \cite{BFZ}, where the authors include a deep discussion on the isometry group of a lightlike manifold. They call the following one as  {\it the most flexible example}. Let us consider $\R^{m+1}=\R\oplus \R^{m}$ with canonical coordinates $(x_{0}, \cdots , x_{m})$ and endowed with the lightlike metric $h=0\oplus g_{_{\R^{m}}}$.
The vector field $Z=\frac{\partial}{\partial x_{0}}$ spans the radical distribution of $h$.
The group of isometries of $h$ is infinite-dimensional. Even more, we have
$$
\mathrm{Iso}\Big(\R^{m+1},h,\frac{\partial}{\partial x_{0}}\Big)=
$$
$$\Big\{f=(f_{0}, \cdots , f_{m})\in \mathrm{Diff}(\R^{m+1}):\frac{\partial f_{0}}{\partial x_{0}}=1,\,\frac{\partial f_{i}}{\partial x_{0}}=0\,,1\leq i\leq m,\,\tau=(f_{1}, \cdots , f_{m})\in  \mathrm{Euc}(\E^{m})\Big\}.
$$

\smallskip

The lightlike manifold $(\R^{m+1},h,\frac{\partial}{\partial x_{0}})$ can also be constructed from a Cartan lightlike geometry by mean of Theorem \ref{direct} as follows.
The set $Q$ of admissible linear frames for $(\R^{m+1}, h, \frac{\partial}{\partial x_{0}})$ is a trivial principal $H$-fiber bundle on $\R^{m+1}$. Let us fix the following trivialization $Q\cong\R^{m+1}\times H$,
$$
\Big(\frac{\partial }{\partial x_{0}}\mid_{x}, a_{1}\frac{\partial}{\partial x_{0}}\mid_{x}+\sum_{i=1}^{m}g_{i1}\frac{\partial }{\partial x_{i}}\mid_{x}, \cdots ,a_{m}\frac{\partial}{\partial x_{0}}\mid_{x}+\sum_{i=1}^{m}g_{im}\frac{\partial }{\partial x_{i}}\mid_{x}\Big)\mapsto \big(x, [\sigma]=\tiny{\left[\begin{array}{ccc} 1 &  a^{t} & -\frac{1}{2}\| a^{t}\|^2\\ 0 & g & -ga \\ 0 & 0 & 1 \end{array} \right]}\Big),
$$
where $a^{t}=(a_{1}, \cdots, a_{m})$ and $g=(g_{i j})\in O(m)$. Thus, the canonical action on the right of $H\subset Gl(m+1, \R)$ on $Q$ reduces to the trivial action on the right of $H$ on $\R^{m+1}\times H$.

Let $\omega\in  \Omega^{1}(Q,\mathfrak{g})$ be given by
$$
\omega(x, [\sigma])\Big((r,Y)^{t}, \xi_{X}[\sigma]\Big)=\mathrm{Ad}[\sigma^{-1}]\scriptsize{\left(\begin{array}{ccc} r & 0 &  0\\ Y &  0  & 0 \\ 0& -Y^{t}&  -r \end{array} \right) +X,}
$$
where $(r,Y)^{t}\in T_{x}\R^{m+1}\cong \R^{m+1}$, $X\in \mathfrak{h}$ and $[\sigma] \in H$. It follows that $(p:Q\to \R^{m+1}, \omega)$ is a lightlike Cartan geometry on $\R^{m+1}$ with $h^{\omega}=h$ and $Z^{\omega}= \frac{\partial}{\partial x_{0}}$.

We claim that 
$$
\mathrm{Aut}(Q, \omega)=\big\{f\in \mathrm{Diff}(\R^{m+1}):f(x)=\big(x_{0}+c, \tau(x_{1},\cdots, x_{m})\big), \, c\in \R, \tau\in \mathrm{Euc}(\E^{m})\big\}.
$$
Indeed, for every $f\in \mathrm{Iso}(\R^{m+1}, h, \frac{\partial}{\partial x_{0}})$ with $f=(f_{0}, \tau)$  and $\tau=\tiny{\left(\begin{array}{cc} 1 & 0  \\ v &  A\end{array} \right)}\in \mathrm{Euc}(\E^{m})$, a straightforward computation gives
$$
Tf: Q \to Q, \quad (x,[\sigma])\mapsto \Big(f(x),D(x,\tau) [\sigma ]\Big), \textrm{ where }D(x,\tau)=\scriptsize{\left[\begin{array}{ccc} 1 &   (\nabla^{0} f_{0})_{x}^{t} & -\frac{1}{2}\|(\nabla^{0} f_{0})_{x}^{t} \|^2\\ 0 & A  & -A(\nabla^{0} f_{0})_{x} \\ 0 & 0 & 1 \end{array} \right]}
$$
and $(\nabla^{0} f_{0})^{t}=(\frac{\partial f_{0}}{\partial x_{1}},\cdots, \frac{\partial f_{0}}{\partial x_{m}})$.
The map $F=Tf$ preserves the action on the right of $H$ on $Q$. Hence, the condition $(F,f)\in \mathrm{Aut}(Q, \omega)$ reduces to 
$$
\omega\Big(f(x),D(x,\tau)[\sigma] \Big)\big(T_{x}f\cdot (r, Y)^{t}, 0\big)=\mathrm{Ad}[\sigma^{-1}]\scriptsize{\left(\begin{array}{ccc} r & 0 &  0\\ Y &  0  & 0 \\ 0& -Y^{t}&  -r \end{array} \right)}
$$
for all $(r,Y)^{t}\in T_{x}\R^{m+1}$ and $[\sigma]\in H$.
In other words, we have 
$$
\mathrm{Ad}(D(x,\tau))\scriptsize{\left(\begin{array}{ccc} r & 0 &  0\\ Y &  0  & 0 \\ 0& -Y^{t}&  -r \end{array} \right)}=\scriptsize{\left(\begin{array}{ccc} r+\sum_{i=1}^{m}Y_{i}\frac{\partial f_{0}}{\partial x_{i}}\mid_{x} & 0 &  0\\ AY &  0  & 0 \\ 0& -Y^{t}A^{t}&  -r -\sum_{i=1}^{m}Y_{i}\frac{\partial f_{0}}{\partial x_{i}}\mid_{x}\end{array} \right)},
$$
for all $(r,Y)^{t}\in T_{x}\R^{m+1}$ and $[\sigma]\in H$.
This equation holds if and only if $\nabla^{0} f_{0}=0$, that is, there is $c\in \R$ with $f_{0}(x)=x_{0}+c$.


%
%
%
%
%

}
\end{example}

\begin{remark}\label{15121}
{\rm According to \cite[Theorem 5.1]{Kob}, for every $G$-structure $P\subset L(N)$ on a connected $n$-dimensional manifold $M$ with $\mathfrak{g}\subset \mathfrak{gl}(n , \R)$ of finite type, the group $\mathrm{Aut}(P)$ of automorphisms of $P$ is a Lie group. 
Given a lightlike manifold $(N,h,Z)$, the corresponding $H$-principal fiber bundle $Q\subset L(N)$ given in (\ref{laQ}) satisfies $ \mathrm{Aut}(Q)=\mathrm{Iso}(N,h,Z)$. Hence, from Lemma \ref{lemalgebra} $iv)$, the existence of examples with $
\mathrm{Iso}(N,h,Z)$ infinite dimensional was expected.
%
%

}
\end{remark}

\section{Lightlike hypersurfaces}

The following two sections show methods to construct lightlike Cartan geometries from given lightlike manifolds. 
This section deals with the case of lightlike hypersurfaces.

The lightlike manifolds naturally appear as hypersurfaces of Lorentzian manifolds. That is, we have an immersion $\psi:(N^{m+1},h)\to (M^{m+2},g)$ where $(M,g)$ is a Lorentzian manifold and the lightlike metric $h=\psi^{*}(g)$. 
The Levi-Civita connection $\nabla^{g}$ of $(M,g)$ gives rise to the induced connection $\overline{\nabla}^{g}:\mathfrak{X}(N)\times \overline{\mathfrak{X}}(N)\to \overline{\mathfrak{X}}(N),$ where $\overline{\mathfrak{X}}(M)$
is the set of vector fields along the immersion $\psi$, \cite[Chap. 4]{One83}. 
Assume there is a vector field $Z\in \mathfrak{X}(N)$ which globally spans the radical distribution $\mathrm{Rad}(h)$. 
Following \cite{Galloway2}, the integral curves of the vector field $Z$, when suitably parameterized, are lightlike geodesics in $M$.
That is, there is a smooth function $\lambda\in C^{\infty}(N)$ such that 
\begin{equation}\label{231}
\overline{\nabla}^{g}_{Z}Z=\lambda Z.
\end{equation}
The function $\lambda$ is called the expansion function of $(N,h,Z))$ in $(M,g)$.

In \cite{Galloway}, the null-Weingarten map with respect to $Z$ is defined as the linear map  
$$[\overline{\nabla}^{g}Z]:T_{y}N/\mathrm{Rad}(h_{y})\to T_{y}N/\mathrm{Rad}(h_{y}), \quad [v]\mapsto [\overline{\nabla}^{g}_{v}Z].$$
Moreover, $[\overline{\nabla}^{g}Z]:\mathcal{E} \to \mathcal{E}$ is self-adjoint with respect to the bundle-like Riemannian metric 
$\overline{h}$. The null second fundamental form $B_{Z}$ is defined by 
\begin{equation}\label{10121}
B_{Z}([u], [v])=\overline{h}([\overline{\nabla}^{g}Z][u], [v]).
\end{equation}
Since $[\overline{\nabla}^{g}Z]$ is self-adjoint, $B_{Z}$ is symmetric \cite{Galloway}, \cite{Galloway2}. The lightlike hypersurface is said to be totally umbilical when there is a smooth function $\rho$ on $N$ such that $B_{Z}=\rho \,\bar{h}$ and properly totally umbilical when the function $\rho$ is nowhere vanishing. The geometric meaning of totally umbilicity is analyzed in \cite{Perlick}.

For lightlike hypersurfaces, the endomorphisms field $A_{Z}$ on $\mathcal{E}$ determined in (\ref{211}) satisfies 
\begin{equation}\label{240120}
\mathcal{L}_{Z} \overline{h}([u],[v])=2\overline{h}(A_{Z}[u],[v])=2B_{Z}([u],[v]),
\end{equation}
and we get
$
A_{Z}=[\overline{\nabla}^{g}Z].
$
Therefore, the lightlike hypersurface $(N,h,Z)$ is generic if and only if the null-Weingarten map 
$
[\overline{\nabla}^{g}Z]: \mathcal{E}\to \mathcal{E}
$
 is a vector fiber bundle isomorphism. Whereas $\overline{\nabla}^{g}Z: TN\to TN$ is a  vector bundle isomorphism if and only if to the generic condition we add the expansion function $\lambda$ is nowhere vanishing.

\smallskip
Assume $(M^{m+1}, g)$ is a timelike oriented Lorentzian manifold and fix a globally defined timelike vector field ${\bf T}\in \mathfrak{X}(M)$.
Let $\psi:(N^{m+1},h)\to (M^{m+2}, g)$ be a lightlike hypersurface. In this case, there always exists $Z\in \mathfrak{X}(N)$ which spans the distribution $\mathrm{Rad}(h)$.

\begin{definition}
A lightlike hypersurface $\psi:(N^{m+1},h, Z)\to (M^{m+2}, g)$ as above is said to be causally oriented when the vector field $Z\in \mathfrak{X}(N)$ satisfies at all $y\in N$
 $$g\left(T_{y}\psi \cdot Z(y),{\bf T}(\psi(y))\right)<0.$$
\end{definition}

\noindent 
There is no loss of generality in assuming $\psi:(N^{m+1},h, Z)\to (M^{m+2}, g)$ is a causally oriented lightlike hypersurface.

\smallskip

Let $O^{+}(M)$ be the set of all time-oriented $g$-admissible linear frames on $M$. That is, $u\in O^{+}(M)$ whenever
$$u=(\ell^{+}, w_{1}, \cdots , w_{m}, \ell^{-})\in L_{x}(M), \quad x\in M,$$
where for all $1\leq i,j \leq m$, we have $g(w_{i}, w_{j})= \delta_{ij}$, $g(w_{i}, \ell^{+})=g(w_{i}, \ell^{-})=g(\ell^{+}, \ell^{+})= g(\ell^{-}, \ell^{-})=0$, $g(\ell^{+}, \ell^{-})=1$ and $\ell^{+}$ is future pointed, that is $g(\ell^{+}, {\bf T}(x))<0$.

Let $O^{+}(m+1,1)\subset O(m+1,1)$ be the subgroup of the Lorentz group preserving the future cone $\mathcal{N}^{m+1}$. That is, $\sigma=(v\vert e_{1}\vert\cdots \vert e_{m}\vert y )\in O^{+}(m+1,1)$ if and only if $v\in \mathcal{N}^{m+1}$.
The M\"obius Lie group $G=PO(m+1,1)$ can be identified with $O^{+}(m+1,1)\subset Gl(m+2, \R)$ by means of
$$
G\to O^{+}(m+1,1), \quad [\sigma] \mapsto \sigma^{+},
$$
where $\sigma^{+}\in [\sigma]$ is the unique representative with $\sigma^{+}\in O^{+}(m+1,1)$.
In this way, the M\"obius Lie group $G=PO(m+1,1)$
acts on the right on $O^{+}(M)\subset L(M)$. The natural projection $\pi:O^{+}(M)\to  M$ is a principal fiber bundle with structure group $G$
and $O^{+}(M)$ is a reduction of the structure group $Gl(m+1, \R)$ on $L(M)$ to $G$.

Given $Q\subset L(N)$, the set of all admissible linear frames of $N$ in (\ref{laQ}), the immersion $\psi$ is lifted to $\Psi:=T\psi:Q\to O^{+}(M)$ as follows 
\begin{equation}\label{23121}
\Psi(b)=\Psi \left(Z(y),e_{1}, \cdots , e_{m}\right)=\big(T_{y}\psi\cdot Z(y),T_{y}\psi\cdot e_{1}, \cdots , T_{y}\psi\cdot e_{m}, \eta(b)\big)
\end{equation}
where $\eta(b)\in T_{\psi(y)}M$ is determined by the condition $\Psi(b)\in O^{+}(M)$. In this way, we get a map from $Q$ to the lightlike vectors of $M$
\begin{equation}\label{1501201}
\eta:Q\to TM,\quad  b\mapsto \eta(b).
\end{equation}
It is a straightforward computation to show that $(\Psi, \psi)$ is an homomorphism of principal fiber bundle with group homomorphism the inclusion $H\subset G$. That is, $\Phi( b\cdot h )=\Phi(b)\cdot h 
$ for all $h\in H, b\in Q$ and the following diagram commutes.
$$
\begin{CD}
Q  @>\Psi >> O^{+}(M)\\
@V p VV @VV \pi V \\
N @>> \psi > M
\end{CD}
$$

\noindent A standard argument for immersions shows.
\begin{lemma}\label{tecnico}
For every $b\in Q$ with $p(b)=y$, there are local sections $s:W\to Q$ and $\tau:U\to O^{+}(M)$ such that $s(y)=b$,
$\psi(W)\subset U$ and $\Psi\circ s =\tau\circ \psi$ on $W$.
\end{lemma}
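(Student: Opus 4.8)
The plan is to produce the two sections in three moves: pick a local section of $Q$ through $b$, push it forward along $\Psi$ to get a frame field defined only along the immersed submanifold $\psi(W)$, and then thicken this frame field to an honest local section of $O^{+}(M)$ by using a local trivialization of the principal $G$-bundle $\pi:O^{+}(M)\to M$.

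First I would use that $p:Q\to N$ is a locally trivial principal $H$-bundle to obtain a smooth section $s:W_{0}\to Q$ with $s(y)=b$ on some neighborhood $W_{0}$ of $y$. The homomorphism property established just above the lemma gives $\pi\circ\Psi=\psi\circ p$, so $\Psi\circ s$ already satisfies $\pi\circ(\Psi\circ s)=\psi$ on $W_{0}$; that is, $\Psi\circ s$ is a lift of $\psi$, but one defined only over $W_{0}$ and not over an open set of $M$. Since $\psi$ is an immersion, I would shrink $W_{0}$ to a neighborhood $W_{1}$ on which $\psi$ is an embedding onto a submanifold $\psi(W_{1})\subset M$.

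The essential step is to extend the frame field $\Psi\circ s$, which lives along the submanifold $\psi(W_{1})$, to a section $\tau$ over an open neighborhood $U$ of $\psi(y)$ in $M$. Here I would choose a $G$-equivariant local trivialization $\Phi:\pi^{-1}(U_{0})\to U_{0}\times G$ around $\psi(y)$ with $\mathrm{pr}_{1}\circ\Phi=\pi$, so that (after shrinking $W_{1}$ so that $\psi(W_{1})\subset U_{0}$) the map $\Phi\circ\Psi\circ s$ takes the form $z\mapsto(\psi(z),\gamma(z))$ for a smooth $\gamma:W_{1}\to G$. Transporting $\gamma$ to the embedded submanifold as $\gamma_{0}:=\gamma\circ(\psi|_{W_{1}})^{-1}$ and using a tubular neighborhood $U\subset U_{0}$ of the submanifold $\psi(W_{1})$ with smooth retraction $r:U\to\psi(W_{1})$, I would set $\bar\gamma:=\gamma_{0}\circ r$ and define $\tau(x):=\Phi^{-1}(x,\bar\gamma(x))$. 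This $\tau$ is a genuine section since $\pi\circ\tau=\mathrm{id}_{U}$, and by construction $\tau(\psi(z))=\Phi^{-1}(\psi(z),\gamma(z))=(\Psi\circ s)(z)$ wherever $\psi(z)\in U$.

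Finally I would take $W:=W_{1}\cap\psi^{-1}(U)$, an open neighborhood of $y$ because $\psi(y)\in U$; then $s(y)=b$ still holds, $\psi(W)\subset U$, and $\Psi\circ s=\tau\circ\psi$ on $W$, as required. The only genuinely delicate point is the thickening step: one must extend the frame field transversally to $\psi(W_{1})$ while keeping its base point at $x$ rather than at the foot $r(x)$ of the retraction, which is exactly why I pass through the trivialization $\Phi$ and extend the $G$-valued coordinate $\gamma_{0}$ instead of the frame itself. Everything else is the routine bookkeeping of shrinking the neighborhoods $W_{0}\supset W_{1}\supset W$ and $U_{0}\supset U$ so that the domains match up.
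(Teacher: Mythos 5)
Your proof is correct and follows essentially the same route as the paper's: the paper's auxiliary local section $\hat\tau:U\to O^{+}(M)$ plays the role of your trivialization $\Phi$, its transition map $a:W\to G$ defined by $\Psi\circ s=r^{a}\circ\hat\tau\circ\psi$ is your $G$-coordinate $\gamma$, and its extension $\hat a(x)=(a\circ\psi^{-1})\bigl[\phi^{-1}(r_{0}(x),\cdots,r_{m}(x),0)\bigr]$ is exactly your $\gamma_{0}\circ r$, with the retraction realized concretely as the slice projection of a cubic coordinate system. The only cosmetic difference is that the paper gets the local embedding of $\psi$ and the retraction simultaneously from one slice chart, whereas you invoke the abstract tubular neighborhood theorem.
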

\begin{proof} We can pick $\left(U, \phi=(r_{0},r_{1}, \cdots , r_{m}, r_{m+1})\right)$ a cubic coordinate system centered at $\psi(y_{0})\in M$ such that $\psi(N)\cap U$ consists only of the slice $r_{m+1}=0$. Let $s$ be a local section of $Q$ on an open set $y\in W\subset N$ with $\psi \mid_{W}$ one-to-one and $s(y)=b$. 
Assume $U$ has been chosen small enough so that $\psi(W)=\psi(N)\cap U$ and there exists a local section $\hat{\tau}:U\to O^{+}(M)$.
Let us consider the map $a:W \to G$ determined by the equation $\Psi \circ s = r^{a}\circ \hat{\tau}\circ \psi$. The map $a$ can be extended to all $U$ as follows
$$
\hat{a}:U\to G, \quad \hat{a}(x)=(a\circ \psi^{-1})\Big[\phi^{-1}\Big(r_{0}(x), r_{1}(x),\cdots , r_{m}(x), 0\Big)\Big].
$$
Then, the section $\tau:=r^{\hat{a}}\circ \hat{\tau}$ on $U$ has the stated properties.
\end{proof}

Let $\gamma^{g}\in \Omega^{1}(O^{+}(M), \mathfrak{g})$ be the (Ehresmann) principal connection form corresponding to the Levi-Civita connection $\nabla^g$. Recall that $\gamma^{g}$ is given in terms of the covariant differentation $\nabla^{g}$ as follows. For every $u\in O^{+}(M)$ and $V\in T_{u}O^{+}(M)$, take a local section $\tau=(\ell^{+}, E_{1}, \cdots , E_{m},\ell^{-} )$ on an open set $U\subset M$ around $x=\pi(u)$ with $\tau(x)=u$. We have the decomposition
$
T_{u}O^{+}(M)=\mathrm{Im}(T_{x}\tau)\oplus \mathcal{V}(u)
$
where $\mathcal{V}(u)$ denotes the vertical distribution of the fiber bundle $\pi:O^{+}(M)\to M$. Then, writing $V=T_{x}\tau\cdot w+\xi_{Y}(u)$ with $Y\in \mathfrak{o}(m+1,1)$, we have 
\begin{equation}\label{221}
\gamma^{g}(u)(V)=\left(\begin{array}{ccc} g(\nabla^{g}_{w}\ell^{+}, \ell^{-})  & Z & 0\\ X & A & -Z^{t} \\ 0 & -X^{t}& -g(\nabla^{g}_{w}\ell^{+}, \ell^{-}) \end{array} \right)+ Y\in \mathfrak{g},
\end{equation}
where $X=(g(\nabla^{g}_{w}\ell^{+} ,E_{1}), \cdots , g(\nabla^{g}_{w}\ell^{+} ,E_{m}))^{t}$, $Z=(g(\nabla^{g}_{w}E_{1}, \ell^{-}), \cdots , g(\nabla^{g}_{w}E_{m}, \ell^{-}))$
and $A=\Big(g(E_{i}, \nabla^{g}_{w}E_{j})\Big)_{1\leq i, \, j\leq m}\in \mathfrak{o}(m)$.

\begin{lemma}\label{232}
Let $\omega:= \Psi^{*}(\gamma^g)\in \Omega (Q, \mathfrak{g})$ be
the pull back of the Levi-Civita connection form. For every $b=(Z(y), e_{1}, \cdots, e_{m})\in Q$ and $\xi\in T_{b}Q$, let $s$ be any local section of $p:Q\to N$ around $y\in N$ with $s(y)=b$. Then, we have
$$
[\omega(b)(\xi)]_{\mathfrak{z}(\mathfrak{g}_{0})}=g(T_{y}\psi\cdot \overline{\nabla}^{g}_{v}Z, \eta(b))\,\,\, \text{and}\,\,\,
[\omega(b)(\xi)]_{-1}=\Big(h\Big( \overline{\nabla}^{g}_{v}Z, e_{1}\Big),\cdots , h\Big(\overline{\nabla}^{g}_{v}Z, e_{m}\Big) \Big)^{t},
$$
where  $\xi=T_{y}s\cdot v +\xi_{Y}(b)\in T_{b}Q$, $Y\in \mathfrak{h}$,
and the subscripts denote the projections from $\mathfrak{g}$ on $\mathfrak{z}(\mathfrak{g}_{0})$ and $\mathfrak{g}_{-1}$, respectively. 
\end{lemma}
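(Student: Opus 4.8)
The plan is to reduce everything to the explicit formula (\ref{221}) for $\gamma^{g}$ by choosing compatible local sections on $N$ and on $M$. First I would invoke Lemma \ref{tecnico} to obtain local sections $s:W\to Q$ and $\tau=(\ell^{+},E_{1},\ldots,E_{m},\ell^{-}):U\to O^{+}(M)$ with $s(y)=b$, $\psi(W)\subset U$ and $\Psi\circ s=\tau\circ\psi$ on $W$. Reading the definition (\ref{23121}) of $\Psi$ against this compatibility forces the frame fields of $\tau$ along $\psi(W)$: one gets $\ell^{+}\circ\psi=T\psi\cdot Z$, $E_{i}\circ\psi=T\psi\cdot e_{i}$ and $\ell^{-}\circ\psi=\eta\circ s$. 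In particular, at $x:=\psi(y)$ we have $\ell^{+}(x)=T_{y}\psi\cdot Z(y)$, $E_{i}(x)=T_{y}\psi\cdot e_{i}$ and $\ell^{-}(x)=\eta(b)$.

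Next I would split the tangent vector. Writing $\xi=T_{y}s\cdot v+\xi_{Y}(b)$ with $v\in T_{y}N$ and $Y\in\mathfrak{h}$, and setting $w:=T_{y}\psi\cdot v\in T_{x}M$, the section identity $\Psi\circ s=\tau\circ\psi$ gives $T_{b}\Psi\cdot(T_{y}s\cdot v)=T_{y}(\tau\circ\psi)\cdot v=T_{x}\tau\cdot w$. Since $(\Psi,\psi)$ is a morphism of principal bundles over the inclusion $H\subset G$ and $Y\in\mathfrak{h}\subset\mathfrak{g}$, fundamental vector fields are carried to fundamental vector fields, so $T_{b}\Psi\cdot\xi_{Y}(b)=\xi_{Y}(\Psi(b))$. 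Hence $T_{b}\Psi\cdot\xi=T_{x}\tau\cdot w+\xi_{Y}(\Psi(b))$ is already in the form required by (\ref{221}) with the section $\tau$, and $\omega(b)(\xi)=\gamma^{g}(\Psi(b))(T_{b}\Psi\cdot\xi)$ equals the matrix of (\ref{221}) built from $w$, $\ell^{\pm}$ and the $E_{i}$, augmented by the vertical term $Y$.

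It then remains to extract the graded components and recognize the induced connection. As $Y\in\mathfrak{h}=[\mathfrak{g}_{0},\mathfrak{g}_{0}]\oplus\mathfrak{g}_{1}$, it contributes neither to $\mathfrak{z}(\mathfrak{g}_{0})$ nor to $\mathfrak{g}_{-1}$, so these two components come entirely from the matrix of (\ref{221}): they are $g(\nabla^{g}_{w}\ell^{+},\ell^{-})$ and $(g(\nabla^{g}_{w}\ell^{+},E_{1}),\ldots,g(\nabla^{g}_{w}\ell^{+},E_{m}))^{t}$, respectively. The key observation is that $\ell^{+}$ is a local extension of the vector field $T\psi\cdot Z$ along $\psi$; since $w=T_{y}\psi\cdot v$ is tangent to $\psi(W)$, computing $\nabla^{g}_{w}\ell^{+}$ along a curve inside $\psi(W)$ identifies it with the induced covariant derivative $\overline{\nabla}^{g}_{v}Z$ as a vector of $T_{x}M$. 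Differentiating $g(T\psi\cdot Z,T\psi\cdot Z)=h(Z,Z)=0$ gives $g(\overline{\nabla}^{g}_{v}Z,T\psi\cdot Z)=0$, and since $T\psi(T_{y}N)=(\R\cdot T\psi\cdot Z)^{\perp}$, this shows $\overline{\nabla}^{g}_{v}Z$ is tangent to the hypersurface, i.e.\ lies in the image of $T_{y}\psi$; this is precisely what allows writing $T_{y}\psi\cdot\overline{\nabla}^{g}_{v}Z$. Substituting $\ell^{-}(x)=\eta(b)$ yields the $\mathfrak{z}(\mathfrak{g}_{0})$-formula, while substituting $E_{i}(x)=T_{y}\psi\cdot e_{i}$ and using $h=\psi^{*}g$ converts $g(\nabla^{g}_{w}\ell^{+},E_{i})$ into $h(\overline{\nabla}^{g}_{v}Z,e_{i})$, giving the $\mathfrak{g}_{-1}$-formula.

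The main obstacle I anticipate is the careful bookkeeping among three arenas---tangent vectors of $N$, vectors along $\psi$, and genuine tangent vectors of $M$---together with the two readings of $\overline{\nabla}^{g}_{v}Z$ (as a vector of $T_{y}N$ versus a vector of $T_{x}M$), which is why the $\mathfrak{z}(\mathfrak{g}_{0})$-entry keeps $T_{y}\psi$ explicit (as $\eta(b)$ is transversal) while the $\mathfrak{g}_{-1}$-entries collapse to $h$. The one substantive step is the identification $\nabla^{g}_{w}\ell^{+}=\overline{\nabla}^{g}_{v}Z$ and its tangency to $N$; everything else is routine extraction of graded blocks from (\ref{221}), using only that $Y$ lives in $\mathfrak{h}$.
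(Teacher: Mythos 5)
Your proposal is correct and follows exactly the paper's route: the paper's proof invokes Lemma \ref{tecnico} to obtain compatible sections $s$ and $\tau$, records the identities $\ell^{+}\circ\psi=T\psi\circ Z$ and $\ell^{-}\circ\psi=\eta\circ s$, and then declares the rest ``a straightforward computation from (\ref{221})''. What you have written is simply that computation carried out in full --- the pushforward $T_{b}\Psi\cdot\xi=T_{x}\tau\cdot w+\xi_{Y}(\Psi(b))$, the vanishing of the $\mathfrak{z}(\mathfrak{g}_{0})$- and $\mathfrak{g}_{-1}$-components of $Y\in\mathfrak{h}$, the identification $\nabla^{g}_{w}\ell^{+}=\overline{\nabla}^{g}_{v}Z$, and the tangency of $\overline{\nabla}^{g}_{v}Z$ --- all of which the paper leaves implicit, so no substantive difference.
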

\begin{proof}
Assume local sections $s=(Z,F_{1}, \cdots , F_{m})$ and $\tau=(\ell^{+},E_{1}, \cdots , E_{m},\ell^{-})$ as in Lemma \ref{tecnico}.
Then, we have $\ell^{+}\circ \psi=T\psi \circ Z$ and $\ell^{-}\circ \psi=\eta \circ s$ on $W\subset N$ where the map $\eta$ is given in (\ref{1501201}). Now, the proof is a straightforward computation from (\ref{221}).
%
%
%
%
%
\end{proof}

\begin{theorem}\label{main1} Let $\psi:(N,h, Z)\to (M,g)$ be a causally oriented lightlike hypersurface of a Lorentzian manifold $(M,g)$.
Then $\omega:=\Psi^{*}(\gamma^{g})\in \Omega^{1}(Q, \mathfrak{g})$ is a Cartan connection if and only if $\overline{\nabla}^{g}Z$ is a vector fiber bundle isomorphism. That is, $(N,h,Z)$ is generic and the expansion function $\lambda$ given in (\ref{231}) is nowhere vanishing. 
In this case, the lightlike metric $h^{\omega}$ and the vector field $Z^{\omega}$ deduced from $\omega$ by means of Theorem \ref{direct} satisfy
$$
h^{\omega}_{_{[\overline{\nabla}^{g}Z]^{-1}}}=h\quad\,\mathrm{and } \quad \, Z^{\omega}=\frac{1}{\lambda}Z,
$$
where $h^{\omega}_{_{[\overline{\nabla}^{g}Z]^{-1}}}$ is given in (\ref{210120}).
\end{theorem}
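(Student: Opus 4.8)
The plan is to check the three defining properties of a Cartan connection, observing that two of them are automatic and that only the pointwise isomorphism property carries the content. First I would note that $\Psi=T\psi:Q\to O^{+}(M)$ is a morphism of principal bundles over the inclusion $H\hookrightarrow G$, so $\Psi\circ r^{\sigma}=r^{\sigma}\circ\Psi$ for $\sigma\in H$ and $T\Psi\cdot\xi_{X}=\zeta_{X}\circ\Psi$ for $X\in\mathfrak{h}$, where $\zeta_{X}$ denotes the fundamental field of the $G$-action on $O^{+}(M)$. Since $\gamma^{g}$ is an Ehresmann principal connection for the $G$-bundle $O^{+}(M)$, the reproducing identity $\gamma^{g}(\zeta_{X})=X$ and the equivariance $(r^{\sigma})^{*}\gamma^{g}=\mathrm{Ad}(\sigma^{-1})\circ\gamma^{g}$ pull back verbatim to yield properties (2b) and (2c) for $\omega=\Psi^{*}(\gamma^{g})$. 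This reduces the entire statement to deciding when $\omega(b):T_{b}Q\to\mathfrak{g}$ is a linear isomorphism.

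Because $\dim Q=(m+1)+\dim H=\tfrac{(m+1)(m+2)}{2}=\dim\mathfrak{g}$, it suffices to characterize injectivity of $\omega(b)$, and the grading $\mathfrak{g}=\mathfrak{g}_{-1}\oplus\mathfrak{z}(\mathfrak{g}_{0})\oplus[\mathfrak{g}_{0},\mathfrak{g}_{0}]\oplus\mathfrak{g}_{1}$ with $\mathfrak{h}=[\mathfrak{g}_{0},\mathfrak{g}_{0}]\oplus\mathfrak{g}_{1}$ is the organizing tool. Writing $\xi=T_{y}s\cdot v+\xi_{Y}(b)$ with $v=T_{b}p\cdot\xi$ and $Y\in\mathfrak{h}$, Lemma \ref{232} shows that the $\mathfrak{z}(\mathfrak{g}_{0})$- and $\mathfrak{g}_{-1}$-components of $\omega(b)(\xi)$ depend only on $v$, namely $g(T_{y}\psi\cdot\overline{\nabla}^{g}_{v}Z,\eta(b))$ and $(h(\overline{\nabla}^{g}_{v}Z,e_{i}))_{i}$. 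The geometric input I would insert here is that $T_{y}N=Z(y)^{\perp}$: differentiating $g(Z,Z)=0$ forces $\overline{\nabla}^{g}_{v}Z\perp Z$, hence $\overline{\nabla}^{g}_{v}Z$ is tangent to $N$. Expanding $\overline{\nabla}^{g}_{v}Z=c_{0}Z+\sum_{i}c_{i}e_{i}$ in the admissible frame $b$ and using $h(e_{i},e_{j})=\delta_{ij}$ together with $g(\eta(b),T_{y}\psi\cdot Z)=1$, these two components equal $(c_{i})_{i}$ and $c_{0}$ respectively, so their simultaneous vanishing is precisely $\overline{\nabla}^{g}_{v}Z=0$.

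With this dictionary the equivalence falls out. If $\overline{\nabla}^{g}Z$ is a bundle isomorphism, then $\omega(b)(\xi)=0$ forces $\overline{\nabla}^{g}_{v}Z=0$, hence $v=0$, so $\xi=\xi_{Y}(b)$ and $0=\omega(b)(\xi)=Y$, giving $\xi=0$: injectivity holds. Conversely, if some $0\neq v\in T_{y}N$ has $\overline{\nabla}^{g}_{v}Z=0$, I would kill the remaining $[\mathfrak{g}_{0},\mathfrak{g}_{0}]\oplus\mathfrak{g}_{1}$-part by choosing $Y\in\mathfrak{h}$ equal to minus the $\mathfrak{h}$-component of $\omega(b)(T_{y}s\cdot v)$; then $\xi=T_{y}s\cdot v+\xi_{Y}(b)$ satisfies $\omega(b)(\xi)=0$ while $T_{b}p\cdot\xi=v\neq0$, contradicting injectivity. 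Thus $\omega$ is a Cartan connection if and only if $\overline{\nabla}^{g}Z:TN\to TN$ is an isomorphism. Finally, since $\overline{\nabla}^{g}_{Z}Z=\lambda Z$ preserves $\mathrm{Rad}(h)$ and induces $A_{Z}=[\overline{\nabla}^{g}Z]$ on $\mathcal{E}$, a block-triangular argument shows this isomorphism property is equivalent to $\lambda$ being nowhere vanishing together with genericity of $(N,h,Z)$, as recorded in \eqref{240120}.

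For the last two assertions I would read off $\phi_{b}$ directly from Lemma \ref{232} and the identification \eqref{151}, obtaining $\phi_{b}(v)=\big(g(T_{y}\psi\cdot\overline{\nabla}^{g}_{v}Z,\eta(b)),\,(h(\overline{\nabla}^{g}_{v}Z,e_{i}))_{i}\big)$. Then $h^{\omega}(v,w)=q(\phi_{b}(v),\phi_{b}(w))=\sum_{i}h(\overline{\nabla}^{g}_{v}Z,e_{i})\,h(\overline{\nabla}^{g}_{w}Z,e_{i})=\bar{h}(A_{Z}[v],A_{Z}[w])$, whence $h^{\omega}_{[\overline{\nabla}^{g}Z]^{-1}}(v,w)=\bar{h}(A_{Z}A_{Z}^{-1}[v],A_{Z}A_{Z}^{-1}[w])=h(v,w)$. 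For the radical generator, $Z^{\omega}=\phi_{b}^{-1}(1,0)^{t}$ is the unique $v$ with $\overline{\nabla}^{g}_{v}Z=Z$, and since $\overline{\nabla}^{g}_{Z}Z=\lambda Z$ this is $v=\tfrac{1}{\lambda}Z$. The main obstacle I anticipate is purely organizational: correctly isolating which graded components of $\omega(b)(\xi)$ are fed by the horizontal direction versus the vertical direction, and handling the converse by the vertical adjustment just described; once the identity $T_{y}N=Z(y)^{\perp}$ is in hand, everything else is forced by Lemma \ref{232}.
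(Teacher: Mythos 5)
Your proposal is correct and follows essentially the same route as the paper: properties (2b) and (2c) are pulled back verbatim from the principal connection $\gamma^{g}$, and the whole content is reduced via Lemma \ref{232} to the pointwise regularity of $\omega(b)$, with the identifications $h^{\omega}(v,w)=h(\overline{\nabla}^{g}_{v}Z,\overline{\nabla}^{g}_{w}Z)$ and $Z^{\omega}=\tfrac{1}{\lambda}Z$ obtained exactly as in the paper's Equations (\ref{237}) and (\ref{234}). The only (harmless, dual) difference is that you establish injectivity of $\omega(b)$ plus the dimension count $\dim Q=\dim\mathfrak{g}$, producing an explicit kernel element by a vertical $\mathfrak{h}$-adjustment for the converse, whereas the paper argues surjectivity from $\mathfrak{h}\subseteq\mathrm{Im}(\omega(b))$ together with $\mathfrak{g}_{-1}\oplus\mathfrak{z}(\mathfrak{g}_{0})\subseteq\mathrm{Im}(\omega(b))$; in fact your treatment of $Z^{\omega}$ through $\phi_{b}$ sidesteps the implicit assumption in (\ref{23123}) that the $\mathfrak{h}$-component of $\omega(b)(T_{y}s\cdot Z(y))$ vanishes.
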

\begin{proof} 
Taking into account that $\gamma^{g}$ is a principal connection and $(\Psi, \psi)$ a principal fiber bundle morphism, one easily shows that 
$\omega$ is $H$-equivariant and reproduces the generators of the fundamental vector fields. That is, items $b)$ and $c)$ in the definition of a Cartan connection. Therefore, the only non-trivial assertion is the equivalence between the regularities of
$\overline{\nabla}^{g}Z$ and $\omega(b):T_{b}Q\to \mathfrak{g}$ for every $b\in Q$. We have that $\omega$ reproduces the generators of the fundamental vector fields and therefore $  \mathrm{Im}(\omega(b))  \supseteq \mathfrak{h}$.
In light of this argument, one has to check that
$\overline{\nabla}^{g}Z$ is a vector bundle isomorphism if and only if
$\mathfrak{g}_{-1}\oplus \mathfrak{z}(\mathfrak{g}_{0})\subset \mathrm{Im}(\omega(b))$.
Let $s$ be a local section of $p:Q\to N$ around $y\in N$ with $s(y)=b=(Z(y), e_{1}, \cdots , e_{m})$. Then, Lemma \ref{232} gives
\begin{equation}\label{23123}
\omega(b)(T_{y}s\cdot Z(y))=\lambda(y)E\neq 0
\end{equation}
and so $\mathfrak{z}(\mathfrak{g}_{0})\subset \mathrm{Im}(\omega(b))$.
Moreover, for every $v\in  T_{y}N$, we also have from Lemma \ref{232}
\begin{equation}\label{23122}
[\omega(b)(T_{y}s\cdot v)]_{-1}=\Big(h\Big( \overline{\nabla}^{g}_{v}Z, e_{1}\Big),\cdots , h\Big(\overline{\nabla}^{g}_{v}Z, e_{m}\Big) \Big)^{t}.\end{equation}
Thus, it is easily seen that the generic condition implies $ \mathfrak{g}_{-1}\subset \mathrm{Im}(\omega(b))$.
The proof for the converse is similar and it also follows from Lemma \ref{232}.  
%

Assume now $(p:Q\to N, \omega=\Psi^{*}(\gamma^{g}))$ is a lightlike Cartan geometry on $N$. From (\ref{233}) and (\ref{23122}), we get for the lightlike metric $h^{\omega}$,
\begin{equation}\label{237}
h^{\omega}(u,v)=\sum_{i=1}^{m}h(\overline{\nabla}^{g}_{u}Z, e_{i}) h(\overline{\nabla}^{g}_{v}Z, e_{i})=h(\overline{\nabla}^{g}_{u}Z,\overline{\nabla}^{g}_{v}Z), \quad u,v\in T_{y}N,
\end{equation}
which implies $h^{\omega}_{_{[\overline{\nabla}^{g}Z]^{-1}}}=h$,
Finally, from (\ref{234}) and (\ref{23123}), we have $Z^{\omega}(y)=T_{b}p\cdot \omega(b)^{-1}(E)=\frac{1}{\lambda(y)}Z(y).$
\end{proof}

\begin{remark}
{\rm Under the assumption of Theorem \ref{main1}, there are two different lightlike metrics $h$ and $h^{\omega}$ on $N$, in general. 
The lightlike metrics $h$ and $h^{\omega}$ agree if and only if $[\overline{\nabla}^{g}Z]:\mathcal{E}\to \mathcal{E}$ is a $\overline{h}$-isometry but
$h$ and $h^{\omega}$ share the radical distribution. 
The principal fiber bundle $Q^{\omega}$ given in (\ref{236}) may also be different from $Q$. In spite of this situation, $Q^{\omega}$ and $Q$ are isomorphic as $H$-principal fiber bundles on $N$. In fact, it follows from (\ref{237}) that $\overline{\nabla}^{g}Z(Q^{\omega})=Q$, where $\overline{\nabla}^{g}Z(u):=(Z(y),\overline{\nabla}^{g}_{u_{1}}Z, \cdots ,\overline{\nabla}^{g}_{u_{1}}Z)\in Q$ for every $u=(Z^{\omega}(y), u_{1}, \cdots , u_{m})\in Q^{\omega}$.
These facts are summarized as follows
$$
(N,h,Z)\stackrel{Th. \ref{main1}}{\longrightarrow}\Big(p:Q\to N, \omega=\Psi^{*}(\gamma^{g})\Big) \stackrel{Th. \ref{direct}}{\longrightarrow} \Big(N, h^{\omega}, Z^{\omega}=\frac{1}{\lambda}Z\Big).
$$
}
\end{remark}

\begin{corollary}\label{umbilical1}
Let $\psi:(N,h, Z)\to (M,g)$ be a totally umbilical causally oriented lightlike hypersurface of a Lorentzian manifold $(M,g)$ with $B_{Z}=\rho \, \bar{h}$.
Then $\omega=\Psi^{*}(\gamma^{g})\in \Omega^{1}(Q, \mathfrak{g})$ is a Cartan connection if and only if the functions $\rho$ and the expansion $\lambda$ are nowhere vanishing. In this case, the lightlike metric $h^{\omega}$ and the vector field $Z^{\omega}$ deduced from $\omega$ are
$h^{\omega}=\rho^{2}\, h$ and  $Z^{\omega}=\frac{1}{\lambda}Z$, respectively. 
\end{corollary}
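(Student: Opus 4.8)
The plan is to obtain the statement as a direct specialization of Theorem \ref{main1}, feeding in the umbilical hypothesis so that essentially no new computation is needed. The first step is to translate the condition $B_{Z}=\rho\,\bar{h}$ into a statement about the endomorphism $A_{Z}$. By the definition (\ref{10121}) we have $\bar{h}([\overline{\nabla}^{g}Z][u],[v])=B_{Z}([u],[v])=\rho\,\bar{h}([u],[v])$ for all $u,v$, and since $\bar{h}$ is a non-degenerate (bundle-like Riemannian) metric on $\mathcal{E}$, this forces $[\overline{\nabla}^{g}Z]=\rho\,\mathrm{Id}_{\mathcal{E}}$. Recalling from (\ref{240120}) that $A_{Z}=[\overline{\nabla}^{g}Z]$, the endomorphism $A_{Z}$ is thus an isomorphism of $\mathcal{E}$ if and only if $\rho$ is nowhere vanishing; that is, the genericity of $(N,h,Z)$ is exactly the condition $\rho\neq 0$ everywhere. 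Substituting this equivalence into Theorem \ref{main1} gives the first assertion at once: $\omega=\Psi^{*}(\gamma^{g})$ is a Cartan connection if and only if both $\rho$ and the expansion $\lambda$ are nowhere vanishing. The identity $Z^{\omega}=\frac{1}{\lambda}Z$ is then inherited verbatim from Theorem \ref{main1}.

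For the lightlike metric I would simply unwind the conclusion $h^{\omega}_{[\overline{\nabla}^{g}Z]^{-1}}=h$ of Theorem \ref{main1} using $A_{Z}=\rho\,\mathrm{Id}_{\mathcal{E}}$. Since $A_{Z}^{-1}=\rho^{-1}\mathrm{Id}_{\mathcal{E}}$, the definition (\ref{210120}) applied to $h^{\omega}$ reads $h^{\omega}_{A_{Z}^{-1}}(u,v)=\overline{h^{\omega}}(\rho^{-1}[u],\rho^{-1}[v])=\rho^{-2}\,h^{\omega}(u,v)$, where $\overline{h^{\omega}}$ is the bundle-like metric of $h^{\omega}$ on $\mathcal{E}$. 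Setting this equal to $h(u,v)$ yields $h^{\omega}=\rho^{2}h$, as claimed. As an independent check I would recompute directly from (\ref{237}): since $Z$ is lightlike, $g(\overline{\nabla}^{g}_{u}Z,Z)=0$, so $\overline{\nabla}^{g}_{u}Z$ is tangent to $N$, and the umbilical relation $[\overline{\nabla}^{g}_{u}Z]=\rho[u]$ lifts to $\overline{\nabla}^{g}_{u}Z=\rho\,u+c(u)\,Z$ for some scalar $c(u)$; plugging this into $h^{\omega}(u,v)=h(\overline{\nabla}^{g}_{u}Z,\overline{\nabla}^{g}_{v}Z)$ and using $Z\in\mathrm{Rad}(h)$ to annihilate every term containing a factor of $Z$ again produces $h^{\omega}=\rho^{2}h$.

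Because this is a corollary, I do not expect a genuine obstacle. The only points that must be handled carefully are, first, the passage from $B_{Z}=\rho\,\bar{h}$ to $A_{Z}=\rho\,\mathrm{Id}_{\mathcal{E}}$, which relies on the non-degeneracy of $\bar{h}$ on $\mathcal{E}$ (and which is what turns the abstract genericity hypothesis of Theorem \ref{main1} into the concrete condition $\rho\neq 0$); and second, correctly tracking the $A_{Z}$-rescaling built into the notation $h^{\omega}_{[\overline{\nabla}^{g}Z]^{-1}}$ in (\ref{210120}), so that the factor emerges as $\rho^{2}$ rather than $\rho^{-2}$. In the direct verification the corresponding subtlety is confirming that the defect $\overline{\nabla}^{g}_{u}Z-\rho\,u$ lies in the radical line $\R\cdot Z$ and not in a transverse direction, which is precisely what guarantees that the cross terms vanish under $h$.
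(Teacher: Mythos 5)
Your proposal is correct and follows exactly the route the paper intends: the corollary is stated without proof precisely because it is the specialization of Theorem \ref{main1} obtained by observing that $B_{Z}=\rho\,\bar{h}$ together with the non-degeneracy of $\bar{h}$ forces $A_{Z}=[\overline{\nabla}^{g}Z]=\rho\,\mathrm{Id}_{\mathcal{E}}$, so genericity becomes $\rho\neq 0$ everywhere and $h^{\omega}=\rho^{2}h$ follows from $h^{\omega}_{[\overline{\nabla}^{g}Z]^{-1}}=h$ (or directly from (\ref{237})). Your cross-check that $\overline{\nabla}^{g}_{u}Z-\rho\,u$ lies in $\R\cdot Z$, using $g(\overline{\nabla}^{g}_{u}Z,Z)=0$ and $T_{y}N=Z^{\perp}$, correctly justifies the vanishing of the cross terms.
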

%
%
%
\begin{example}\label{1701201}
{\rm Lorentzian manifolds $(M,g)$, with a recurrent lightlike vector field $Z$, are considered in \cite{Leistner}.  In this case, there is a $1$-form $\alpha$ on $M$ such that $\nabla^{g}Z=\alpha \otimes Z$. The manifold $M$ is foliated by the lightlike hypersurfaces $Z^{\perp}$. For every $f\in C^{\infty}(M)$, let us consider the conformally related metric $\widehat{g}=e^{2f}g$. The induced connection $\overline{\nabla}^{\widehat{g}}$ on each lightlike hypersurface $Z^{\perp}$ 
satisfies
$$
\overline{\nabla}_{V}^{\widehat{g}}Z=\alpha(V) Z+V(f)Z+Z(f)V,\quad V\in \mathfrak{X}(Z^{\perp}).
$$
The null Weingarten map is $[\overline{\nabla}^{\widehat{g}}Z]=Z(f)\cdot \mathrm{Id}_{\mathcal{E}}$ and the expansion function $\lambda=\alpha(Z)+ 2 Z(f)$. Thus, Corollary \ref{umbilical1} applies to the every  lightlike hypersurface $Z^{\perp}$ where $Z(f)$ and $\alpha(Z)+ 2 Z(f)$ are never vanishing functions.
}
\end{example}

\begin{example}\label{10123}
{\rm The notion of {\it lightlike hypersurface $N$ with base $M^2$} in $\L^{4}$ was introduced in \cite{Koss} as a smooth map $i:M\times \R= N \to \L^{4}$ (nonimmersive on a set $S(i)$) such that
\begin{enumerate}
\item For all $y\in N-S(i)$ the induced metric $i^{*}\langle \,,\, \rangle$ is lightlike. 
\item The fibers of $\pi :N-S(i) \to M$ are lightlike geodesics.
\item There exists a  smooth spacelike section $r$ of $\pi :N \to M$ which does not intersect $S(i)$.
\end{enumerate}
Let us consider the upper half lightlike hypersurface $(N^{+}- S(i), h:=i^{*}(g), Z=t\partial_{t})$ with $N^{+}=M\times \R_{>0}$.
Given a frame $(Z(y), v_{1},v_{2})$ of $T_{y}N$, Kossowski sets the following curvature quantity  \cite[Def. 5]{Koss} (a normalization condition on $Z$ is added in \cite{Koss}) 
$$\overline{K}(y):= \frac{\mathrm{det}\,h(\overline{\nabla}^{0}_{v_{i}}Z,v_{j})}{\mathrm{det}\,h(v_{i}, v_{j})}.$$ 
The generic condition is satisfied if and only if $\overline{K}$ is a nowhere vanishing function.
Moreover, the above condition $2)$ on the fibers of $\pi$ is equivalent to $\overline{\nabla}^{0}_{\partial_{t}}\partial_{t}=0$ and hence $\lambda=1$. 
The notion of lightlike hypersurface $N$ with base $M$ can be extended to general ambient Lorentzian manifolds in a natural way.
}
\end{example}

Recall that the curvature of the principal connection form $\gamma^{g}\in \Omega^{1}(O^{+}(M), \mathfrak{g})$ is defined by
$
\Omega=d\gamma^{g}+1/2[\gamma^{g}, \gamma^{g}]\in \Omega^{2}(O^{+}(M), \mathfrak{g}).
$
For tangent vectors $v,w\in T_{x}M$ and $u\in O^{+}(M)$ with $\pi(u)=x$, the Riemann curvature tensor $R^{g}$ is determined by
$$
R^{g}(v,w)=u\Big[\Omega(v^{*}, w^{*})\circ u^{-1}\Big],
$$
where $v^{*}, w^{*}\in T_{u}O^{+}(M)$ with $T_{u}\pi\cdot v^{*}=v$ and $T_{u}\pi\cdot w^{*}=w$. Here $\Omega(v^{*}, w^{*})\in \mathfrak{g}$ denotes the corresponding endomorphism of $\R^{m+2}$ and $u:\R^{m+2}\to T_{x}M$ the linear isomorphism deduced from $u\in O^{+}(M)$, \cite[Chap. 3]{KN63}. Now, let $\psi:(N,h, Z)\to (M,g)$ be a causally oriented lightlike hypersurface 
 with $\overline{\nabla}^{g}Z$ a vector fiber bundle isomorphism. 
 The curvature form $K^{\omega}$ of the Cartan connection $\omega$ in Theorem \ref{main1} satisfies
$
K^{\omega}=\Psi^{*}(\Omega).
$
Therefore, for $\xi_{b}, \zeta_{b}\in T_{b}Q$ with $p(b)=y$, we get
\begin{equation}\label{26121}
K^{\omega}(\xi_{b}, \zeta_{b})=\Omega(T_{b}\Psi\cdot \xi_{b}, T_{b}\Psi\cdot \zeta_{b})=
\Psi(b)^{-1}\Big[R^{g}\Big(T_{b}(\psi \circ p)\cdot \xi_{b}, T_{b}(\psi \circ p)\cdot \zeta_{b}\Big)\circ \Psi(b)\Big].
\end{equation}
As a direct consequence from (\ref{26121}), we have.
\begin{enumerate}
\item The curvature form $K^{\omega}=0$ if and only $R^{g}$ vanishes identically on $\psi(N)$. That is, $R^{g}(T_{y}\psi\cdot v,T_{y}\psi\cdot w)=0$ on $T_{\psi(y)}M$ for all $v,w\in T_{y}N$.

\item For $E=\tiny{\left(\begin{array}{ccc} 1 &  0 & 0\\ 0 & 0 & 0 \\ 0 & 0 & -1 \end{array} \right)}\in \mathfrak{g}$, the curvature function $k^{\omega}:Q\to \Lambda^{2}(\mathfrak{g}/\mathfrak{h})^{*}\otimes \mathfrak{g}$ satisfies
$$
k^{\omega}(b)(E+\mathfrak{h}, Y+\mathfrak{h})= \Psi(b)^{-1}\Big[\frac{1}{\lambda(y)}R^{g}\Big(T_{y}\psi\cdot Z(y), T_{y}\psi\cdot \omega^{-1}(Y)(y)\Big)\circ \Psi(b)\Big].
$$
Hence, $k^{\omega}(b)(E+\mathfrak{h}, *)=0$ if and only if $R^{g}(T_{y}\psi\cdot Z(y), T_{y}\psi\cdot v)=0$
on $T_{\psi(y)}M$ for all $v\in T_{y}N$. 


\end{enumerate}
Combining these two vanishing results with general characterizations of homogeneous models \cite[Prop. 1.5.2]{CS09} and correspondence spaces \cite[Theor. 1.5.14]{CS09}, we obtain. 

\begin{corollary}\label{13121}
 Let $\psi:(N^{m+1},h, Z)\to (M,g)$ be causally oriented lightlike hypersurface of a Lorentzian manifold $(M,g)$. Assume $(N,h,Z)$ is generic and the expansion function $\lambda$ is nowhere vanishing. Then,
 \begin{enumerate}
 \item $(N, h^{\omega}, \lambda^{-1}Z)$ is locally isomorphic, as lightlike Cartan geometry, to the future lightlike cone $\mathcal{N}^{m+1}\subset \L^{m+2}$ if and only if $R^{g}$ vanishes identically on $\psi(N)$. In this case, the lightlike metric $h^{\omega}=\lambda^{2}h$.
 \item For every $y \in N$ there is a small enough local twistor space $\pi:U\to M$ and a section $\tau:M \to U$ such that $(U, h^{\omega},  \lambda^{-1}Z)$ is isomorphic, as lightlike Cartan geometry, to the fiber bundle of scales of the conformal Riemannian manifold $(M, [\tau^{*}(h^{\omega})])$
 if and only if $R^{g}(T\psi\cdot Z, T\psi\cdot V)=0$ for every $V\in \mathfrak{X}(N)$ (see Remark \ref{292}). In this case, the lightlike metric $h^{\omega}=\lambda^{2}h$.
 \end{enumerate}
\end{corollary}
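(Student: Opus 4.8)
The plan is to read off both equivalences directly from the two vanishing results recorded after Equation (\ref{26121}) and to feed them into the two structural theorems of \cite{CS09}: the characterisation of flat Cartan geometries as local homogeneous models \cite[Prop. 1.5.2]{CS09}, and the characterisation of correspondence spaces \cite[Theor. 1.5.14]{CS09}. Throughout, $\omega=\Psi^{*}(\gamma^{g})$ is the Cartan connection of Theorem \ref{main1}, which exists precisely because $(N,h,Z)$ is generic with $\lambda$ nowhere vanishing, and $(h^{\omega},Z^{\omega}=\lambda^{-1}Z)$ are the data induced through Theorem \ref{direct}.

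For item 1 I would argue as follows. By the first consequence of (\ref{26121}), the curvature form $K^{\omega}$ vanishes identically if and only if $R^{g}$ vanishes on $\psi(N)$. On the other hand \cite[Prop. 1.5.2]{CS09} asserts that $K^{\omega}=0$ holds if and only if every point of $N$ has a neighbourhood on which $(p\colon Q\to N,\omega)$ is isomorphic, as a Cartan geometry of type $(G,H)$, to the restriction of the homogeneous model $(G\to G/H,\omega_{G})$ about $o=eH$. Since $G/H=\mathcal{N}^{m+1}$ is the future lightlike cone and the Maurer--Cartan form induces there exactly the lightlike structure $(h,\mathcal{Z})$ (Subsection 3.2), such an isomorphism is a local isomorphism of lightlike Cartan geometries onto the cone. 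Chaining the two equivalences proves the characterisation; the ``only if'' direction also uses that $K^{\omega}$ is a local invariant of the Cartan geometry and that the model has vanishing curvature.

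For item 2 the relevant input is the second consequence of (\ref{26121}): $k^{\omega}(b)(E+\mathfrak{h},\ast)=0$ for all $b$ if and only if $R^{g}(T\psi\cdot Z,T\psi\cdot V)=0$ for every $V$. I would first observe that this is exactly the hypothesis of \cite[Theor. 1.5.14]{CS09} for the chain $H\subset P\subset G$: indeed $\mathfrak{p}/\mathfrak{h}=\mathfrak{z}(\mathfrak{g}_{0})$ is the line spanned by the grading element $E$, so the condition $k^{\omega}(u)(X+\mathfrak{h},\ast)=0$ for all $X\in\mathfrak{p}$ collapses to the single condition on $E$. Moreover the vertical distribution $\mathcal{V}(N)=Q\times_{H}(\mathfrak{p}/\mathfrak{h})$ is the one-dimensional radical distribution spanned by $Z^{\omega}$, hence automatically integrable, and a local twistor space $\pi\colon U\to M$ in the sense of Definition \ref{82} always exists, its leaves being the integral curves of $Z$. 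Applying \cite[Theor. 1.5.14]{CS09} then yields, for $U$ small enough, a Cartan geometry of type $(G,P)$ on $M$ together with an isomorphism of $(p^{-1}(U)\to U,\omega|_{U})$ onto an open subset of the correspondence space $\mathcal{C}(M)$; by Remark \ref{292} this $(G,P)$-geometry is the fibre bundle of scales of the induced conformal structure, which for a chosen section $\tau$ is represented by $\tau^{*}(h^{\omega})$. The converse uses that every correspondence space has curvature function annihilated by $E+\mathfrak{h}$ (recalled in Subsection 3.1), which through (\ref{26121}) forces $R^{g}(T\psi\cdot Z,T\psi\cdot V)=0$.

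The delicate point in both items is the asserted identity $h^{\omega}=\lambda^{2}h$, which is not part of the abstract Cartan data and must be extracted from the embedding; I expect this to be the main obstacle. Writing $b:=[\overline{\nabla}^{g}Z]$ for the null-Weingarten map and $\nabla^{\mathcal{E}}$ for the connection induced by $\overline{\nabla}^{g}$ on $\mathcal{E}=TN/\mathrm{Rad}(h)$, Equation (\ref{237}) gives $h^{\omega}=\overline{h}(b\,\cdot,b\,\cdot)$, so the claim is equivalent to the pointwise algebraic identity $b^{2}=\lambda^{2}\,\mathrm{Id}_{\mathcal{E}}$. The natural tool is the propagation (Riccati) equation along the null generator, $\nabla^{\mathcal{E}}_{Z}b=-b^{2}+\lambda b-R$, where $R[\cdot]=[R^{g}(\cdot,Z)Z]$ is the null tidal operator. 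Under the curvature hypotheses $R$ vanishes, and a short computation using $\nabla^{\mathcal{E}}_{Z}\overline{h}=0$ then already gives $\mathcal{L}_{Z}h^{\omega}=2\lambda\,h^{\omega}$, i.e. the expansion operator of $(h^{\omega},Z)$ equals $\lambda\,\mathrm{Id}_{\mathcal{E}}$, matching the homothetic behaviour $\mathcal{L}_{\mathcal{Z}}h=2h$ of the cone and $\mathcal{L}_{s\partial_{s}}h^{\omega}=2h^{\omega}$ of the bundle of scales. Upgrading this conformal statement to $b^{2}=\lambda^{2}\,\mathrm{Id}_{\mathcal{E}}$ is where the full strength of the hypotheses must be used: for item 1 I would exploit the complete vanishing of $R^{g}$ on $\psi(N)$ via the Codazzi equation, so that $b$ is a Codazzi tensor, together with its self-adjointness and genericity, while for item 2 I would transport the normalisation carried by the correspondence-space identification back through the isomorphism. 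Making this final step clean and hypothesis-minimal is the part of the argument that will require the most care.
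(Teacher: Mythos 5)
Your treatment of the two equivalences coincides with the paper's: the proof there states that items 1) and 2) are direct applications of \cite[Prop. 1.5.2]{CS09} and \cite[Theor. 1.5.14]{CS09} to the two vanishing consequences of (\ref{26121}), and your two supporting observations --- that $\mathfrak{p}/\mathfrak{h}=\mathfrak{z}(\mathfrak{g}_{0})$ is spanned by the grading element $E$, so the hypothesis of \cite[Theor. 1.5.14]{CS09} collapses to the single condition $k^{\omega}(b)(E+\mathfrak{h},\ast)=0$, and that the vertical distribution $Q\times_{H}(\mathfrak{p}/\mathfrak{h})$ is the radical distribution, hence integrable with twistor spaces as in Definition \ref{82} --- are exactly what the paper leaves implicit. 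No gap there.

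Where you stop short, the paper argues in two lines, and it is in fact your own ``transport the normalisation'' idea applied uniformly to \emph{both} items: the cone satisfies $\mathcal{L}_{\mathcal{Z}}h=2h$ and the bundle of scales satisfies $\mathcal{L}_{s\partial_{s}}h^{\omega}=2h^{\omega}$ (Remark \ref{292}); since an isomorphism of lightlike Cartan geometries preserves $(h^{\omega},Z^{\omega})$ (remark after Definition \ref{9123}), in both cases the paper asserts $[A_{Z^{\omega}}]=\mathrm{Id}_{\mathcal{E}}$ and concludes via the chain $[v]=[A_{Z^{\omega}}(v)]=[\overline{\nabla}^{g}_{v}(\lambda^{-1}Z)]=\lambda^{-1}[\overline{\nabla}^{g}_{v}Z]$, so that $[\overline{\nabla}^{g}Z]=\lambda\,\mathrm{Id}$ and $h^{\omega}=\lambda^{2}h$ follows from (\ref{237}). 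However, your hesitation at this step is better founded than you may realize, and your Codazzi--Riccati plan for item 1 cannot close it. Your Riccati computation yields precisely $\mathcal{L}_{Z^{\omega}}h^{\omega}=2h^{\omega}$ under the curvature hypothesis, with no umbilicity input; this is the \emph{same} statement that transport from the model produces, namely $A_{Z^{\omega}}=\mathrm{Id}$ for the operator of (\ref{211}) formed with the metric $h^{\omega}$. The paper's chain then reads $A_{Z^{\omega}}$ as $[\overline{\nabla}^{g}Z^{\omega}]$, but the identity (\ref{240120}) computes the operator attached to the \emph{induced} metric $h$, and the two operators agree only when $b=[\overline{\nabla}^{g}Z]$ is already proportional to the identity. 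Concretely: the null-normal congruence of a strictly convex, non-umbilic spacelike surface in $\L^{4}$, with $Z=e^{t}\partial_{t}$ along the affine generators, is causally oriented and generic, has $\lambda=e^{t}$ nowhere zero and flat ambient curvature, so all hypotheses of item 1 hold, yet $b$ has distinct eigenvalues and $h^{\omega}=\overline{h}(b\,\cdot,b\,\cdot)\neq\lambda^{2}h$. So the homothety $\mathcal{L}_{Z^{\omega}}h^{\omega}=2h^{\omega}$ is the full strength of what the hypotheses yield; your suspicion that the upgrade to $b^{2}=\lambda^{2}\,\mathrm{Id}$ requires extra input was correct, and no Codazzi argument from $R^{g}=0$ alone will supply it --- the obstruction lies in the statement itself, not in your technique.
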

\begin{proof}
Only the assertion $h^{\omega}=\lambda^{2}h$ is not a direct application of \cite[Prop. 1.5.2]{CS09} and \cite[Theor. 1.5.14]{CS09} from the above items $1)$ and $2)$, respectively.
In both cases, $[A_{Z^{\omega}}]$ is the identity map, hence, for the vector field $Z^{\omega}=\frac{1}{\lambda}Z$ we get
$$
[v]=[A_{Z^{\omega}}(v)]=\Big[\overline{\nabla}^{g}_{v}\left(\frac{1}{\lambda}Z\right)\Big]= \frac{1}{\lambda} [\overline{\nabla}^{g}_{v }Z], \quad v\in T_{y}N,
$$
and therefore $h^{\omega}=\lambda^{2}h$.
%
%
\end{proof}

\begin{corollary}
Let $\psi:(N^{m+1},h, Z)\to \L^{m+2}$ be causally oriented lightlike hypersurface. Assume $(N,h,Z)$ is generic and the function $\lambda$ is nowhere vanishing. Then, $(N,\lambda^{2}\, h, \lambda^{-1}Z)$ is locally isomorphic, as lightlike Cartan geometry, to the future lightlike cone $\mathcal{N}^{m+1}\subset \L^{m+2}$.
\end{corollary}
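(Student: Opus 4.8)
The plan is to recognize this statement as the special case of Corollary \ref{13121}(1) in which the ambient Lorentzian manifold is the flat Lorentz-Minkowski spacetime $\L^{m+2}$ itself. First I would invoke the standing hypotheses: since $(N,h,Z)$ is generic and the expansion function $\lambda$ is nowhere vanishing, Theorem \ref{main1} guarantees that $\omega:=\Psi^{*}(\gamma^{g})$ is a genuine Cartan connection on the $H$-principal bundle $Q$ of admissible frames, so $(p:Q\to N,\omega)$ is a lightlike Cartan geometry on $N$, and the data deduced via Theorem \ref{direct} are $h^{\omega}=h(\overline{\nabla}^{g}Z,\overline{\nabla}^{g}Z)$ and $Z^{\omega}=\lambda^{-1}Z$.

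The only input special to this ambient space is that the Lorentz-Minkowski metric on $\L^{m+2}$ is flat, so its Riemann curvature tensor $R^{g}$ vanishes identically; in particular it vanishes on $\psi(N)$. Hence the hypothesis ``$R^{g}$ vanishes identically on $\psi(N)$'' of Corollary \ref{13121}(1) is automatically satisfied, and I can apply that item verbatim: $(N,h^{\omega},\lambda^{-1}Z)$ is locally isomorphic, as a lightlike Cartan geometry, to the future lightlike cone $\mathcal{N}^{m+1}\subset\L^{m+2}$, and moreover $h^{\omega}=\lambda^{2}h$. Substituting this last identity shows that the triple $(N,\lambda^{2}h,\lambda^{-1}Z)$ is precisely $(N,h^{\omega},\lambda^{-1}Z)$, which yields the claim.

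There is essentially no obstacle beyond the already-cited results, so this is a one-line specialization. If one prefers to argue directly, without invoking Corollary \ref{13121}, the route is equally short: by formula (\ref{26121}) the curvature function of $\omega$ is $K^{\omega}=\Psi^{*}(\Omega)$, which is pulled back from $R^{g}$, so flatness of $\L^{m+2}$ forces $K^{\omega}=0$; then \cite[Prop. 1.5.2]{CS09} asserts that a zero-curvature Cartan geometry of type $(G,H)$ is locally isomorphic to its homogeneous model $(G\to G/H,\omega_{G})$, which here is exactly the future lightlike cone $\mathcal{N}^{m+1}=G/H$. The only bookkeeping is the metric normalization $h^{\omega}=\lambda^{2}h$, obtained as in the proof of Corollary \ref{13121} from the fact that $A_{Z^{\omega}}=\lambda^{-1}[\overline{\nabla}^{g}Z]$ becomes the identity on $\mathcal{E}$, whence $[v]=\lambda^{-1}[\overline{\nabla}^{g}_{v}Z]$ for all $v$.
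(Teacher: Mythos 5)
Your proposal is correct and follows exactly the paper's route: the corollary is the specialization of Corollary \ref{13121}(1) to the flat ambient space $\L^{m+2}$, where $R^{g}\equiv 0$ makes the curvature hypothesis automatic, and the identification $h^{\omega}=\lambda^{2}h$ comes from $[A_{Z^{\omega}}]=\mathrm{Id}_{\mathcal{E}}$ just as in the paper's proof of that corollary. Your alternative direct argument via $K^{\omega}=\Psi^{*}(\Omega)=0$ and \cite[Prop. 1.5.2]{CS09} is likewise precisely how the paper itself establishes Corollary \ref{13121}(1), so there is nothing to add.
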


\begin{remark}
{\rm The assumptions of Corollary \ref{13121} imply $[A_{Z^{\omega}}]=\mathrm{Id}_{\mathcal{E}}$. Hence, from Remark \ref{291}, there exist local coordinates $(s, r_{1}, \cdots , r_{m})$ in which the lightike metric has the following local shape $$h= \exp(2s)\sum_{1\leq i,j, \leq m}C_{ij }(r_{1}, \cdots , r_{m})dr_{i}\otimes dr_{j}.$$
}
\end{remark}

\section{Ambient Lorentzian metrics for lightlike manifolds}

This section deals with a construction of an ambient Lorentzian metric $(M,g)$ for certain lightlike manifolds $(N^{m+1},h,Z)$. 
That is, a Lorentzian manifold $(M,g)$, which admits $(N,h,Z)$ as lightlike hypersurface, with expansion function $\lambda=1$ and such that the generic condition will permit to assure that $\overline{\nabla}^{g}Z$ is a vector bundle isomorphism and, therefore, Theorem \ref{main1} can be applied.

Let  $(N^{m+1},h,Z)$ be a lightlike manifold.
To star with, let us note that, without loss of generality, the vector field $Z$ can be assumed to be complete. In fact, we endow $N$ with a complete Riemannian metric $g_{_{R}}$. This metric $g_{_{R}}$ always exists due to a theorem by Nomizu and Ozequi \cite{NoOz}. Then, we replace $Z$ by the vector field $Z/\| Z\|_{g_{_{R}}}$ which is complete. Let us recall that the genericity condition on $(N^{m+1},h,Z)$ is independent of the choice of $Z$ orienting $\mathrm{Rad}(h)$ \cite{BZ17}.
Thus, we get an action on the right $N\times \R \to N$
given by the flows $y \mapsto \mathrm{Fl}^{Z}_{t}(y)$.

Following the guideline example (section 2), we prefer a multiplicative action on the right as follows
\begin{equation}\label{301}
N\times \R_{>0}\to N, \quad (y, s)\mapsto   \mathrm{Fl}^{Z}_{\log s}(y):=y\cdot s.
\end{equation}
Let us assume this action is proper and free. In this case, the orbit space $M:=N/\R_{>0}$ is an $m$-dimensional smooth manifold and the natural projection $\pi: N\to M$ is a trivial principal fiber bundle with structure group $\R_{>0}$ (see for instance \cite[App. E]{Sharpe} or \cite[Theor. 2.9]{Flores}). 
The manifold $M=N/\R_{>0}$ is physically interpreted as the surface of the light source and $\pi: N\to M$ is a global twistor space for $(N,h,Z)$ (Definition \ref{82}).
In this situation, the manifold $N$ is a generalized conformal structure on $M$ in the sense of \cite{BZ17} as follows,
$$
N\to \mathrm{Sym}^{+}(TM), \quad y \mapsto g_{y},
$$
here $\mathrm{Sym}^{+}(TM)$ denotes the positive definite symmetric bilinear forms on $M$ and $g_{y}$ is the positive definite scalar product on $T_{x}M$ obtained from the projection
$T_{y}\pi:T_{y}N\to T_{x}M$. Every section $\Gamma:M\to N$ of $\pi$ gives a Riemannian metric on $M$ by means of $\Gamma^{*}(h)$. In general, there is no relation between Riemannian metrics on $M$ obtained from different sections of $\pi:N\to M$.

\begin{remark}\label{11121}
{\rm For lightlike hypersurfaces $(N,h,Z)$ of a Lorentzian manifold $(M,g)$, the conditions proper and free for the action (\ref{301}) are closely related with causality conditions on the ambient manifold. For example, if $(M,g)$ is globally hyperbolic, $N$ is embeded  in $M$ and the vector field $Z$ can be extended to $M$ as a causal vector field, then it is not difficult to get  from \cite[Cor. 3.14]{Flores} that the action (\ref{301}) is proper and free.
}
\end{remark}
\noindent Every (spacelike) section $\Gamma:M\to N$ provides a diffeomorphism, that is, a global trivialization of $\pi: N\to M$ as follows
\begin{equation}\label{diff}
M\times \R_{>o}\to N, \quad (x,s)\mapsto \mathrm{Fl}_{\log s}^{Z}(\Gamma(x))=\Gamma(x)\cdot s.
\end{equation}
Thus, $M$ is endowed with a $1$-parametric family of Riemannian metrics $g_{s}:=(\Gamma_{s})^{*}(h)$ induced from the sections $\Gamma_{s}(x):= \Gamma (x)\cdot s$ for all $s>0$.
Under the diffeomorphism (\ref{diff}), the lightlike metric $h$ and the vector field $Z$ read as $q\vert_{(x,s)}=g_{s}\oplus 0$ and $s\partial_{s}$, respectively (compare with Remark \ref{292}).

In a similar way to the Lorentzian metric given in (\ref{29122}) for the guideline example $\mathcal{N}^{m+1}$, we will construct an ambient Lorentzian metric for $(N,h,Z)$ when the vector field $Z$ is complete and the action (\ref{301}) is proper and free. This construction is inspired by the ambient metric construction by Feffermann and Graham \cite{F-G}. To start with, let us fix a section $\Gamma:M \to N$ and the corresponding trivialization (\ref{diff}). On the product manifold ${\bf M}:=(-\varepsilon, +\varepsilon)\times M\times \R_{>0}$ (for small enough $\varepsilon >0$), we define the
family of {\it ambient} Lorentzian metrics
\begin{equation}\label{29123}
g^{\sigma}\mid_{(\rho, x,s)}=ds\otimes d(\rho s)+d(\rho s)\otimes ds +\sigma^{2}(\rho)g_{s},
\end{equation}
where $\sigma\in C^{\infty}(-\varepsilon, +\varepsilon)$ with $\sigma >0$ and $\sigma(0)=1$. The function $\sigma$ will be determined later. The natural embedding of $M\times \R_{>0}$ in ${\bf M}$ at $\rho=0$ recovers the original lightlike metric $q$. All metrics $g^{\sigma}$ satisfy $g^{\sigma}(\partial_{s}, \partial_{s})=2\rho$, $g^{\sigma}(\partial_{\rho}, \partial_{\rho})=0$ and $g^{\sigma}(\partial_{s}, \partial_{\rho})=s$. The causal character of the hypersurfaces $M\times \R_{>0}$ at constant $\rho_{0}$ depends on the sign of $\rho_{0}$. These hypersurfaces are Lorentzian for $\rho_{0}< 0$ and Riemannian for $\rho_{0}>0$. 
All metrics $g^{\sigma}$ agree on the the band $B=(-\varepsilon, +\varepsilon)\times \R_{>0}$ and we write
$g^{\sigma}\vert_{B}:=g_{_{B}}$ omitting the superscript $\sigma$.

\begin{remark}\label{warped}
{\rm The metrics $g^{\sigma}$ in (\ref{29123}) are not warped product metrics $B\times_{f} M$, \cite[Chap. 7]{One83}, in general. Even more, we have $g^{\sigma}=g_{_{B}}+ f^{2}(\rho, s)g_{1}$ for some $f >0$ if and only if there is a smooth function $u(s)$ such that $g_{s}= e^{2u(s)}g_{1}$. 
Of course, $g_{s}= e^{2u(s)}g_{1}$ implies $g^{\sigma}$ is a warped product metric. Conversely, assume $\sigma^{2}(\rho)g_{s}=f^{2}(\rho, s)g_{1}$. Take any $g_{s}$-unitary tangent vector $v_{s}$. Then we have $f(\rho, s)=\sigma(\rho)/\|v_{s}\|_{1}$. Thus, the value $\epsilon (s)=1/\|v_{s}\|_{1}$ depends only on $s$. Hence, $f(\rho, s)=\epsilon (s)\sigma(\rho)$ and we get $g_{s}=\epsilon^{2} (s)g_{1}$. In this case, the manifold $N$ is a {\it generic} generalized conformal structure on the manifold $M$ \cite[Def. 1.2]{BZ17} if and only if $\epsilon'(s)\neq 0$. Under the assumption $\epsilon'(s)\neq 0$,
$$
N\to \mathrm{Sym}^{+}(TM), \quad y=\Gamma(x)\cdot s \mapsto \epsilon^{2}(s)g_{1}\mid_{T_{x}M}
$$
is a {\it classical} conformal structure on $M$.
There is a natural choice for $g^{\sigma}$ when the metrics $g_{s}$ are conformally related and the metric $g_{1}$ is Einstein with 
$\mathrm{Ric}^{g_{1}}=2\lambda m g_{1}$ ($m\geq 3$). Namely, the Ricci flat metric \cite[Sec. 7]{F-G}
$$g^{\sigma}=ds\otimes d(\rho s)+d(\rho s)\otimes ds +(1+\lambda\rho)^2s^{2}g_{1}.$$
}
\end{remark}

As a consequence of Remark \ref{warped}, the formulas for the Levi-Civita connection and the curvature of warped product metrics \cite[Chap. 7]{One83} do not work for the metrics $g^{\sigma}$. The following Lemmas provide such formulas. Let $\pi_{B}$ and $\pi_{M}$ be the projections of ${\bf M}=B \times M$ onto $B$ and $M$, respectively. The vector fields on $B$ or $M$ can be lifted to ${\bf M}$. The sets of such lifts will be denoted by $\mathfrak{L}(B)$ and $\mathfrak{L}(M)$, respectively. We simplify the notation by writing $\sigma$ for $\sigma\circ \pi_{1}$, where $\pi_{1}:{\bf M}\to (-\varepsilon, \varepsilon)$ is the projection on the first factor. 
Following the terminology for warped products,  the vectors tangent to $B$ are called horizontal and vectors tangent to $M$ vertical. We write $\mathbf{nor}$ and $\mathbf{tan}$ for the projections from $\mathfrak{X}({\bf M})$ to $\mathfrak{L}(B)$ and $\mathfrak{L}(M)$, respectively. 
The vector fields 
\begin{equation}\label{21}
T=\frac{1}{\sqrt{2}}\Big[(1+\rho/s^2)\partial_{\rho}-(1/s)\partial_{s}\Big],\,\, E=\frac{1}{\sqrt{2}}\Big[(1-\rho/s^2)\partial_{\rho}+(1/s)\partial_{s}\Big] \in \mathfrak{X}(B)\subset \mathfrak{L}(B)
\end{equation}
 give an orthnormal basis of $g_{B}$ with $T$ timelike. Thus, the Lorentzian manifold $({\bf M}, g^{\sigma})$ is timelike orientable.

\begin{lemma}\label{LC}
On $({\bf M}, g^{\sigma})$, for $V,W\in \mathfrak{L}(M)$, we have for the Levi-Civita connection $\nabla^{\sigma}$
\begin{enumerate}
\item $
\nabla^{\sigma}_{\partial_{\rho}}\partial_{\rho}=0, \,\,\nabla^{\sigma}_{\partial_{s}}\partial_{\rho}=\frac{1}{s}\partial_{\rho},\,\, \nabla^{\sigma}_{V}\partial_{\rho}=\frac{\sigma'}{\sigma}V \,\,\textrm{and}\,\,\,\nabla^{\sigma}_{\partial_{s}}\partial_{s}=0.$ In particular, the expansion function of the lightlike hypersurface  $M\times \R_{>0}$ in ${\bf M}$ at $\rho=0$ satisfies 
$\lambda=1$.

\item $\mathbf{tan}(\nabla^{\sigma}_{V}W\vert_{(\rho,x,s)})\in \mathfrak{L}(M)$ is the lift of $\nabla^{s}_{V}W$ where $\nabla^s$ is the Levi-Civita connection of $g_{s}=(\Gamma_{s})^{*}(h)$ and 
$$\mathbf{nor}(\nabla^{\sigma}_{V}W\vert_{(\rho,x,s)})=\frac{\sigma'}{s\sigma}g^{\sigma}(V,W )\Big(-\partial_{s}+ \frac{2\rho}{s}\partial_{\rho}\Big)-\frac{1}{s}g^{\sigma}\Big(\nabla^{\sigma}_{\partial_{s}}V,W\Big)\partial_{\rho}.
$$
\end{enumerate}
\end{lemma}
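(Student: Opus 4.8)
The plan is to read off everything from the Koszul formula
$$2g^{\sigma}(\nabla^{\sigma}_{X}Y,Z)=Xg^{\sigma}(Y,Z)+Yg^{\sigma}(X,Z)-Zg^{\sigma}(X,Y)+g^{\sigma}([X,Y],Z)-g^{\sigma}([X,Z],Y)-g^{\sigma}([Y,Z],X),$$
feeding in two sets of elementary data. The first is the bracket relations for the adapted fields: since $V,W\in\mathfrak{L}(M)$ depend only on the $M$-coordinates while $\partial_{\rho},\partial_{s}$ are horizontal, one has $[\partial_{\rho},V]=[\partial_{s},V]=0$, while $[V,W]\in\mathfrak{L}(M)$ is the lift of the bracket on $M$. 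The second is the coefficient table already recorded from (\ref{29123}): $g^{\sigma}(\partial_{s},\partial_{s})=2\rho$, $g^{\sigma}(\partial_{s},\partial_{\rho})=s$, $g^{\sigma}(\partial_{\rho},\partial_{\rho})=0$, $g^{\sigma}(\partial_{\rho},V)=g^{\sigma}(\partial_{s},V)=0$ and $g^{\sigma}(V,W)=\sigma^{2}g_{s}(V,W)$, together with the observation that $V$ annihilates the functions $\rho$ and $s$, whereas $\partial_{\rho}$ only sees $\sigma^{2}(\rho)$ and $\partial_{s}$ only sees the $s$-dependence of $g_{s}$.

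For item (1) I would run the Koszul formula four times, each time testing the unknown covariant derivative against $\partial_{\rho}$, $\partial_{s}$ and an arbitrary $U\in\mathfrak{L}(M)$, so that nondegeneracy of $g^{\sigma}$ pins down the field. Each computation is short because most terms drop out; for instance $g^{\sigma}(\nabla^{\sigma}_{\partial_{s}}\partial_{\rho},\partial_{s})=1$ forces $\nabla^{\sigma}_{\partial_{s}}\partial_{\rho}=\tfrac1s\partial_{\rho}$, and the surviving term $\partial_{\rho}g^{\sigma}(V,W)=2\sigma\sigma'g_{s}(V,W)$ forces $\nabla^{\sigma}_{V}\partial_{\rho}=\tfrac{\sigma'}{\sigma}V$, while $\nabla^{\sigma}_{\partial_{\rho}}\partial_{\rho}=0$ and $\nabla^{\sigma}_{\partial_{s}}\partial_{s}=0$ are equally immediate. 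The expansion statement then follows: at $\rho=0$ the radical is spanned by $s\partial_{s}$, and $\nabla^{\sigma}_{s\partial_{s}}(s\partial_{s})=s\bigl((\partial_{s}s)\partial_{s}+s\nabla^{\sigma}_{\partial_{s}}\partial_{s}\bigr)=s\partial_{s}$, so comparison with (\ref{231}) yields $\lambda=1$.

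For item (2) I would split $\nabla^{\sigma}_{V}W$ into its $\mathbf{tan}$ and $\mathbf{nor}$ parts. Testing against vertical $U$, the factor $\sigma^{2}$ comes out of all six Koszul terms and the remainder collapses exactly to $2\sigma^{2}g_{s}(\nabla^{s}_{V}W,U)=2g^{\sigma}(\nabla^{s}_{V}W,U)$, identifying $\mathbf{tan}(\nabla^{\sigma}_{V}W)$ with the lift of $\nabla^{s}_{V}W$; this is the clean part. The delicate part is the normal component, and this is where I expect the main obstacle, precisely because $g^{\sigma}$ is not a warped product (Remark \ref{warped}): the horizontal metric is the null-type block $\left(\begin{smallmatrix}0&s\\ s&2\rho\end{smallmatrix}\right)$, so one cannot merely divide by a squared norm but must invert this matrix. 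Testing against $\partial_{\rho}$ and $\partial_{s}$ gives $g^{\sigma}(\nabla^{\sigma}_{V}W,\partial_{\rho})=-\tfrac{\sigma'}{\sigma}g^{\sigma}(V,W)$ and $g^{\sigma}(\nabla^{\sigma}_{V}W,\partial_{s})=-\tfrac12\sigma^{2}\partial_{s}\bigl(g_{s}(V,W)\bigr)$, and applying the inverse $\left(\begin{smallmatrix}-2\rho/s^{2}&1/s\\ 1/s&0\end{smallmatrix}\right)$ produces the coefficients of $\partial_{\rho}$ and $\partial_{s}$.

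The final step is to recognise the $s$-derivative term. A separate one-line Koszul computation, testing $\nabla^{\sigma}_{\partial_{s}}V$ against vertical $U$, gives $g^{\sigma}(\nabla^{\sigma}_{\partial_{s}}V,W)=\tfrac12\sigma^{2}\partial_{s}\bigl(g_{s}(V,W)\bigr)$, so the $\partial_{s}$-pairing above equals $-g^{\sigma}(\nabla^{\sigma}_{\partial_{s}}V,W)$; regrouping the two horizontal coefficients then gives exactly $\frac{\sigma'}{s\sigma}g^{\sigma}(V,W)\bigl(-\partial_{s}+\tfrac{2\rho}{s}\partial_{\rho}\bigr)-\tfrac1s g^{\sigma}(\nabla^{\sigma}_{\partial_{s}}V,W)\partial_{\rho}$, as claimed. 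I would emphasise that the nonstandard term $-\tfrac1s g^{\sigma}(\nabla^{\sigma}_{\partial_{s}}V,W)\partial_{\rho}$ is the genuine deviation from the warped-product formulas, arising solely from the dependence of $g_{s}$ on $s$.
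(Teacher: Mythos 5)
Your proof is correct, and its skeleton --- the bracket relations $[\partial_{\rho},V]=[\partial_{s},V]=[\partial_{\rho},\partial_{s}]=0$, $[V,W]\in\mathfrak{L}(M)$, fed into the Koszul formula --- is exactly the paper's; all your intermediate pairings ($g^{\sigma}(\nabla^{\sigma}_{V}W,\partial_{\rho})=-\tfrac{\sigma'}{\sigma}g^{\sigma}(V,W)$, $g^{\sigma}(\nabla^{\sigma}_{V}W,\partial_{s})=-g^{\sigma}(\nabla^{\sigma}_{\partial_{s}}V,W)=-\tfrac12\sigma^{2}\partial_{s}(g_{s}(V,W))$) check out, as does the expansion computation $\nabla^{\sigma}_{s\partial_{s}}(s\partial_{s})=s\partial_{s}$. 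Where you genuinely diverge from the paper is in the mechanics of item (2). For the tangential part the paper computes nothing: it invokes the standard submanifold fact (\cite[Chap.~4]{One83}) that $\mathbf{tan}(\nabla^{\sigma}_{V}W)$ on the nondegenerate slice $\{\rho\}\times M\times\{s\}$ is the Levi-Civita connection of the induced metric $\sigma^{2}(\rho)g_{s}$, and then observes that a homothety leaves the Levi-Civita connection unchanged; your Koszul computation against vertical $U$ re-proves this textbook lemma from scratch, which is more self-contained but longer. For the normal part the paper does not invert your block $\left(\begin{smallmatrix}0&s\\ s&2\rho\end{smallmatrix}\right)$: it expands $\mathbf{nor}(\nabla^{\sigma}_{V}W)=-g^{\sigma}(\nabla^{\sigma}_{V}W,T)T+g^{\sigma}(\nabla^{\sigma}_{V}W,E)E$ in the orthonormal frame $T,E$ of (\ref{21}), uses metric compatibility to rewrite the coefficients as $g^{\sigma}(\nabla^{\sigma}_{V}T,W)$ and $-g^{\sigma}(\nabla^{\sigma}_{V}E,W)$, and then substitutes the formulas already proved. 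The two extractions are linearly equivalent; the paper's reuses the frame $T,E$ it needs anyway for time-orientability, while yours makes explicit, via the null-type horizontal block, exactly why the warped-product shortcut of \cite[Prop.~7.35]{One83} is unavailable --- a point the paper only makes in Remark \ref{warped}.
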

\begin{proof} 
 Taking into account that $\partial_{\rho}, \partial_{s}\in \mathfrak{L}(B)$ and $V\in \mathfrak{L}(M)$, we have $[\partial_{\rho},V]=[\partial_{s},V]=0$ and, of course $[\partial_{\rho}, \partial_{s}]=0$ and $[V,W]\in \mathfrak{L}(M)$. Now, the Koszul formula gives $g^{\sigma}(\nabla^{\sigma}_{\partial_{\rho}}\partial_\rho, F)=g^{\sigma}(\nabla^{\sigma}_{\partial_{s}}\partial_s, F)=0$ for all $F\in \mathfrak{X}({\bf M})$ and 
$$g^{\sigma}(\nabla^{\sigma}_{\partial_{s}}\partial_\rho, V)=0,\,\, g^{\sigma}(\nabla^{\sigma}_{\partial_{s}}\partial_\rho, \partial_{s})=1, \,\,
g^{\sigma}(\nabla^{\sigma}_{V}\partial_\rho, \partial_s)=0,\,\, g^{\sigma}(\nabla^{\sigma}_{V}\partial_\rho, W)=\frac{\sigma'}{\sigma}g^{\sigma}(V,W).$$
Thus, we obtain the required formulas in item $1)$.

Since $V$ and $W$ are tangent to $\{\rho\}\times M \times \{s\}$, we know that $\mathbf{tan}(\nabla^{\sigma}_{V}W\mid_{(\rho,x,s)})$ is the covariant derivative applied to $V$ and $W$ restricted on $\{\rho\}\times M \times \{s\}$ (see for instance \cite[Chap. 4]{One83}). The metric on $\{\rho\}\times M \times \{s\}$ is $\sigma^{2}(\rho)g_{s}$ and hence, it is homothetic to $g_{s}$ and the homotheties preserves the Levi-Civita connection.
Finally, we proceed as follows
$$
\mathbf{nor}(\nabla^{\sigma}_{V}W)=-g^{\sigma}(\nabla^{\sigma}_{V}W, T)T+ g^{\sigma}(\nabla^{\sigma}_{V}W, E)E=
g^{\sigma}(\nabla^{\sigma}_{V}T, W)T- g^{\sigma}(\nabla^{\sigma}_{V}E,W)E.
$$
Substituting (\ref{21}) into this formula and using item $2)$, we conclude the proof. 
\end{proof}

\begin{remark}
{\rm When the metric $g^{\sigma}$ is a warped product, we know from Remark \ref{warped} that 
$g^{\sigma}=g_{B}+\sigma^{2}(\rho)\epsilon^{2}(s)g_{1}$ and hence $\nabla^{\sigma}_{\partial_{s}}V=(\epsilon'(s)/\epsilon(s)) V$.
In this case, Lemma \ref{LC} reduces to the formulas for the Levi-Civita connection of such warped product metrics \cite[Prop. 7.35]{One83}. 
On the contrary to the warped product metrics, the spacelike fibers $\{\rho \}\times M\times \{s \}\subset{\bf M}$ are not totally umbilical, in general. The mean curvature vector field of every such spacelike fibers can be computed from Lemma \ref{LC} as follows
$$\overrightarrow{{\bf H}}(\rho,x,s)=\frac{\sigma'}{s\sigma}\Big(-\partial_{s}+ \frac{2\rho}{s}\partial_{\rho}\Big)-\frac{1}{ms^{2}}\left(s\,\mathrm{div}^{\sigma}(\partial_{s})-1\right)\,\partial_{\rho}.
$$

}
\end{remark}

\begin{lemma}\label{Rs}
On $({\bf M}, g^{\sigma})$, for $V,W\in \mathfrak{L}(M)$, we have for the curvature tensor $R^{\sigma}$
\begin{enumerate}
\item $R^{\sigma}(\partial_{s},\partial_{\rho})\partial_{\rho}=0,\,\, R^{\sigma}(V,\partial_{\rho})\partial_{\rho}=-(\sigma''/\sigma)V,\,\, R^{\sigma}(\partial_{\rho},\partial_{s})\partial_{s}=-(2/s^{2})\partial_{\rho}$.

\item $R^{\sigma}(V, \partial_{s})\partial_{s}= - \nabla^{\sigma}_{\partial_{s}}(\nabla^{\sigma}_{V}\partial_{s})$,\,\, $R^{\sigma}(V, \partial_{\rho})\partial_{s}=\frac{\sigma '}{s\sigma}V- \nabla^{\sigma}_{\partial_{\rho}}(\nabla^{\sigma}_{V}\partial_{s})$.

\end{enumerate}
In particular, we get $\mathrm{Ric}^{\sigma}(\partial_{\rho}, \partial_{\rho})=-m\frac{\sigma''}{\sigma}$.
\end{lemma}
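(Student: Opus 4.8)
The plan is to compute every listed component directly from the definition $R^{\sigma}(X,Y)Z=\nabla^{\sigma}_{X}\nabla^{\sigma}_{Y}Z-\nabla^{\sigma}_{Y}\nabla^{\sigma}_{X}Z-\nabla^{\sigma}_{[X,Y]}Z$, feeding in the connection formulas of Lemma \ref{LC} together with the bracket relations already recorded in its proof, namely $[\partial_{\rho},\partial_{s}]=0$, $[\partial_{\rho},V]=[\partial_{s},V]=0$ for $V\in\mathfrak{L}(M)$, and $[V,W]\in\mathfrak{L}(M)$. The conceptual point to keep in mind is that, by Remark \ref{warped}, $g^{\sigma}$ is \emph{not} a warped product in general, so the O'Neill warped-product curvature identities \cite[Chap. 7]{One83} are unavailable; this is exactly why Lemma \ref{LC} and the present computation are needed as replacements. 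Since all the brackets entering the listed components vanish, each $R^{\sigma}$ collapses to a difference of two second covariant derivatives.

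For item $(1)$ only part $(1)$ of Lemma \ref{LC} is used, as every vector involved is either horizontal or paired against $\partial_{\rho}$. For $R^{\sigma}(\partial_{s},\partial_{\rho})\partial_{\rho}$ one uses $\nabla^{\sigma}_{\partial_{\rho}}\partial_{\rho}=0$ and $\nabla^{\sigma}_{\partial_{s}}\partial_{\rho}=\tfrac1s\partial_{\rho}$, and the surviving term $-\nabla^{\sigma}_{\partial_{\rho}}(\tfrac1s\partial_{\rho})$ vanishes because $\tfrac1s$ is independent of $\rho$. For $R^{\sigma}(V,\partial_{\rho})\partial_{\rho}$ the term reduces to $-\nabla^{\sigma}_{\partial_{\rho}}(\tfrac{\sigma'}{\sigma}V)$; the product rule produces $-\big(\tfrac{\sigma'}{\sigma}\big)'V-\tfrac{\sigma'}{\sigma}\nabla^{\sigma}_{\partial_{\rho}}V$, and the delicate step is that $\nabla^{\sigma}_{\partial_{\rho}}V=\nabla^{\sigma}_{V}\partial_{\rho}=\tfrac{\sigma'}{\sigma}V$ by torsion-freeness, so the $\big(\tfrac{\sigma'}{\sigma}\big)^{2}$ contributions cancel and one is left with $-\tfrac{\sigma''}{\sigma}V$. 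The remaining component $R^{\sigma}(\partial_{\rho},\partial_{s})\partial_{s}$ collapses, via $\nabla^{\sigma}_{\partial_{s}}\partial_{s}=0$, to $-\nabla^{\sigma}_{\partial_{s}}\nabla^{\sigma}_{\partial_{\rho}}\partial_{s}=-\nabla^{\sigma}_{\partial_{s}}(\tfrac1s\partial_{\rho})$, which is a one-line product-rule evaluation (care is needed with the $s$-derivative of $\tfrac1s$ against $\nabla^{\sigma}_{\partial_{s}}\partial_{\rho}=\tfrac1s\partial_\rho$).

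For item $(2)$ the strategy is identical, but the answers are deliberately kept in terms of $\nabla^{\sigma}$, so part $(2)$ of Lemma \ref{LC} is not expanded. For $R^{\sigma}(V,\partial_{s})\partial_{s}$ the vanishing of $\nabla^{\sigma}_{\partial_{s}}\partial_{s}$ and of $[V,\partial_{s}]$ leaves exactly $-\nabla^{\sigma}_{\partial_{s}}\nabla^{\sigma}_{V}\partial_{s}$, after rewriting $\nabla^{\sigma}_{V}\partial_{s}=\nabla^{\sigma}_{\partial_{s}}V$. For $R^{\sigma}(V,\partial_{\rho})\partial_{s}$ the term $\nabla^{\sigma}_{V}\nabla^{\sigma}_{\partial_{\rho}}\partial_{s}=\nabla^{\sigma}_{V}(\tfrac1s\partial_{\rho})=\tfrac1s\nabla^{\sigma}_{V}\partial_{\rho}=\tfrac{\sigma'}{s\sigma}V$ supplies the first-order term, while $-\nabla^{\sigma}_{\partial_{\rho}}\nabla^{\sigma}_{V}\partial_{s}$ is the second; no bracket correction appears because $[V,\partial_{\rho}]=0$.

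Finally, the Ricci identity follows by contraction: choosing the frame $\{\partial_{\rho},\partial_{s},V_{1},\dots,V_{m}\}$ with $(V_{i})$ a local frame of $\mathfrak{L}(M)$, one has $\mathrm{Ric}^{\sigma}(\partial_{\rho},\partial_{\rho})=\mathrm{tr}\big(X\mapsto R^{\sigma}(X,\partial_{\rho})\partial_{\rho}\big)$; by $(1a)$ the $\partial_{s}$- and $\partial_{\rho}$-slots contribute nothing, while by $(1b)$ each $V_{i}$ contributes $-\tfrac{\sigma''}{\sigma}$, giving $-m\tfrac{\sigma''}{\sigma}$. The main obstacle is not any single identity but the bookkeeping forced by the non-warped structure: one must resist warped-product shortcuts and instead track, at each step, the $s$- and $\rho$-derivatives of the scalar coefficients $\tfrac1s$ and $\tfrac{\sigma'}{\sigma}$ together with the torsion-free exchanges $\nabla^{\sigma}_{X}\partial_{\rho}=\nabla^{\sigma}_{\partial_{\rho}}X$ and $\nabla^{\sigma}_{V}\partial_{s}=\nabla^{\sigma}_{\partial_{s}}V$, which is precisely where the cancellation producing $\sigma''/\sigma$ occurs.
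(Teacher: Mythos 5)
Your method --- direct evaluation of $R^{\sigma}(X,Y)Z=\nabla^{\sigma}_{X}\nabla^{\sigma}_{Y}Z-\nabla^{\sigma}_{Y}\nabla^{\sigma}_{X}Z-\nabla^{\sigma}_{[X,Y]}Z$ from Lemma \ref{LC}, using the vanishing brackets and the torsion-free exchanges --- is exactly the intended route (the paper states Lemma \ref{Rs} without proof), and almost all of your steps check out: the cancellation giving $-(\sigma''/\sigma)V$, both formulas of item $(2)$, and the trace argument for $\mathrm{Ric}^{\sigma}(\partial_{\rho},\partial_{\rho})$, which is legitimate in your non-orthonormal frame because the endomorphism $X\mapsto R^{\sigma}(X,\partial_{\rho})\partial_{\rho}$ is literally diagonal in it ($\partial_{\rho}\mapsto 0$, $\partial_{s}\mapsto 0$, $V_{i}\mapsto -(\sigma''/\sigma)V_{i}$). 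But there is a genuine gap at the third formula of item $(1)$: the ``one-line product-rule evaluation'' whose output you never display does not give the stated value. Carrying it out, $R^{\sigma}(\partial_{\rho},\partial_{s})\partial_{s}=-\nabla^{\sigma}_{\partial_{s}}\bigl(\tfrac{1}{s}\partial_{\rho}\bigr)=\tfrac{1}{s^{2}}\partial_{\rho}-\tfrac{1}{s}\cdot\tfrac{1}{s}\partial_{\rho}=0$, not $-(2/s^{2})\partial_{\rho}$. So the proposal, as written, claims to prove an identity that its own prescribed computation refutes; you either did not execute that line or did not notice the mismatch.

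The discrepancy lies in the paper's displayed formula, not in your method, and you should have flagged it. Two independent checks confirm that $0$ is correct. First, in the coordinate $u=\rho s$ the band metric becomes $ds\otimes du+du\otimes ds$, which is flat, and by Lemma \ref{LC}$(1)$ the covariant derivatives among $\partial_{\rho},\partial_{s}$ stay in the span of $\partial_{\rho}$, so the purely horizontal curvature components must vanish. Second, the claimed value $-(2/s^{2})\partial_{\rho}$ is independent of $\sigma$ and of $g_{s}$, so it would be nonzero for \emph{every} metric of the family (\ref{29123}); yet for the guideline example of Section 3 ($g_{s}=s^{2}g_{_{\S^{m}}}$, $\sigma(\rho)=1+\rho/2$) the metric is induced from $\L^{m+2}$ via the immersion $\alpha$ and hence flat --- a contradiction. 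Note that the Ricci conclusion $\mathrm{Ric}^{\sigma}(\partial_{\rho},\partial_{\rho})=-m\,\sigma''/\sigma$, which is what Section 6 actually uses to single out the family $g^{c}$, is unaffected, since it only invokes $R^{\sigma}(\partial_{s},\partial_{\rho})\partial_{\rho}=0$ and $R^{\sigma}(V,\partial_{\rho})\partial_{\rho}=-(\sigma''/\sigma)V$. A complete write-up should therefore prove the lemma with the corrected value $R^{\sigma}(\partial_{\rho},\partial_{s})\partial_{s}=0$ and state explicitly that the published formula contains an error, rather than glossing the decisive evaluation.
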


\begin{remark}\label{11122}
{\rm
In order to link the ideas of this section with the Fefferman-Graham construction \cite{F-G},
we would like to recall the notion of ambient manifolds for Riemannian conformal structures as appeared in \cite{CG03}. Let $(M, [g])$ be a conformal Riemannian structure on a $(n\ge2)$-dimensional manifold $M$.
Let ${\bf N}\subset \mathrm{Sym}^{+}(M)$ be the fiber bundle of scales of the conformal structure and $\pi :{\bf N}\to M^{m}$ the projection. Every section of $\pi$ provides a  representative $g\in [g]$.
On ${\bf N}$ there are a free $\R_{>0}$-action (called dilations) defined by $\delta(s)(u)= s^{2}u$ for $u \in {\bf N}$ and a tautological symmetric $2$-tensor $h$ given by $h(\xi, \eta)=u(T_{u}\pi \cdot \xi, T_{u}\pi \cdot \eta)$ for  $\xi, \eta \in T_{u}{\bf N}$. The tautological tensor $h$ is a lightlike metric on ${\bf N}$.
An ambient manifold is a $(m+2)$-dimensional manifold ${\bf M}$ endowed with
a free $\R_{>0}$-action and a $\R_{>0}$-equivariant embedding $i:{\bf N} \to {\bf M}$. 
If ${\bf M}$ is an ambient manifold, then an ambient metric is a Lorentzian metric $g_{_{L}}$ on ${\bf M}$ such that the following conditions hold.
\begin{enumerate}
\item The metric is homogeneous of degree $2$ with respect to the $\R_{>0}$-action. That is, if $X\in \mathfrak{X}({\bf M})$ denotes the fundamental vector field generating the $\R_{>0}$-action, then we have $\mathcal{L}_{X}g_{_{L}}=2g_{_{L}}$.
\item For all $u=g_{x}\in {\bf N}$ and $\xi, \eta \in T_{u}{\bf N}$, we have $i^{*}(g_{_{L}})(\xi, \eta)=g_{x}(T_{u}\pi \cdot\xi, T_{u}\pi \cdot\eta)$. That is, $i^{*}(g_{_{L}})=h$.
\end{enumerate}
The original definition of an ambient metric in \cite{F-G} adds certain conditions of Ricci-flatness for the metric $g_{_{L}}$.
This approach to the study of conformal geometry is in the roots of this paper. In fact, we can read this construction in terms of lightlike manifolds as follows. To start with, we focus on the lightlike manifold $({\bf N}, h , Z)$ where $Z$ is the fundamental vector field generating the dilations. The vector field $Z$ is complete and the action (\ref{301}) is free and proper (recall that $\pi:{\bf N}\to M$ is a $\R_{>0}$-principal fiber bundle). 
Now, the {\it surface of the light source} is the conformal Riemannian manifold $M={\bf N}/\R_{>0}$. The ambient manifold ${\bf M}$ is a Lorentzian manifold and $i: {\bf N}\to {\bf M}$ realizes $({\bf N}, h , Z)$ as a lightlike hypersurface. The vector field $X\in \mathfrak{X}({\bf M})$ satisfies $g_{_{L}}(X,X)\circ i=i^{*}(h)(Z,Z)=0$ and $i:({\bf N}, h, Z)\to ({\bf M}, g_{L})$ is causally oriented with respect to the timelike orientation induced by $X$ on $({\bf M}, g_{L})$. Morever, the metric $g_{_{L}}$ is homogeneous of degree $2$ implies that
$\overline{\nabla}^{g_{_{L}}}Z=\mathrm{Id}$. Hence, Theorem \ref{main1} can be applied to $i:({\bf N}, h, Z)\to ({\bf M}, g_{L})$ and shows that there is a Cartan connection $\omega$ on the fiber principal bundle $Q\subset L({\bf N})$ of all admissible linear frames of ${\bf N}$ which induces the original tautological lightlike metric $h$ and the vector field $Z$. 
}
\end{remark}

\begin{remark}
{\rm Assume $m\geq 3$ and let $(p:\mathcal{P}\to M, \omega^{[g]})$ be the canonical Cartan geometry of type $(G,P)$ corresponding to the conformal structure $(M,[g])$ (see for instance \cite[Theor. 1.6.7]{CS09}. Then, the quotient manifold $\mathcal{P}/H$ can be seem as the fiber bundle of scales ${\bf N}$ of the conformal structure \cite[Sec. 1.6.3]{CS09}.
}
\end{remark}

We now return to our setting. Let $(N,h,Z)$ be a generic lightlike manifold and assume $Z$ complete with the $\R_{>0}$-action 
given in (\ref{301}) free and proper. Consider the orbit space $M=N/\R_{>0}$ with projection $\pi:N \to M$ and fix a section $\Gamma: M \to N$. Then,
the diffeomorphism (\ref{diff}) shows the lightlike manifold $(N,h,Z)$ as the lightlike hypersurface $(M\times \R_{>0}, q, s\partial_{s})$ embedded at $\rho=0$ in $({\bf M}, g^{\sigma})$. Explicitly, we have
\begin{equation}\label{51}
\psi:(N,h,Z)\to ({\bf M}, g^{\sigma}), \,\, y\mapsto (0, \pi(y),s),\,\, \mathrm{where } \,\,y=\mathrm{Fl}^{Z}_{\log s}(\pi(y)).
\end{equation}
The lightlike hypersurface (\ref{51}) is causally oriented with respect to the vector field $-T$ in $(\ref{21})$ and, from  Lemma \ref{LC}, we know that expansion function $\lambda=1$. 
The generic condition implies the null-Weingarten map $[\overline{\nabla}^{\sigma}Z]$ is a vector fiber bundle isomorphism (\ref{240120}).
Moreover, Lemma \ref{Rs} gives that $\mathrm{Ric}^{\sigma}(\partial_{\rho}, \partial_{\rho})=0$ reduces the family of metrics $g^{\sigma}$
to the one-parametric subfamily
$$g^{c}=ds\otimes d(\rho s)+d(\rho s)\otimes ds + (1+c\rho)^{2}g_{s}, \quad c\in \R.$$ 
Let $\gamma^{c}\in \Omega^{1}(O^{+}({\bf M}), \mathfrak{g})$ be the (Ehresmann) principal connection form corresponding to the Levi-Civita connection $\nabla^c$ of $g^{c}$ and let $Q$ be the principal fiber bundle
of admissible linear frames of $N$. We are in position to apply Theorem \ref{main1} for lightlike hypersurfaces as in (\ref{51}).

\begin{theorem}\label{61}
Let  $(N^{m+1}, h, Z)$ be a generic lightlike manifold with $Z$ complete. Assume the $\R_{>0}$-action given by the flow of $Z$ (\ref{301}) is free and proper. Then, for every spacelike section $\Gamma:N/\R_{>0}:=M\to N$ and $c\in \R$, the pull-back
$\omega^{c}:=\psi^{*}(\gamma^{c})\in \Omega(Q, \mathfrak{g})$ is a Cartan connection. The lightlike metric $h^{c}$ and the vector field $Z^{c}$ deduced from $\omega^{c}$ are
$h^{c}_{[\overline{\nabla}^{c}Z]^{-1}}= h$ and  $Z^{c}=Z$. 
\end{theorem}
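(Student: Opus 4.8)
The plan is to recognize the statement as a direct application of Theorem \ref{main1} to the concrete lightlike hypersurface produced by the ambient construction, so that almost all of the work reduces to checking the two hypotheses of that theorem for the embedding $\psi$ of (\ref{51}). First I would record that, under the global trivialization (\ref{diff}), the section $\Gamma$ realizes $(N,h,Z)$ as $(M\times \R_{>0}, q, s\partial_{s})$, and that $\psi$ embeds this slice at $\rho=0$ inside $({\bf M}, g^{c})$. Since the conformal factor $(1+c\rho)^{2}$ equals $1$ at $\rho=0$, the induced metric $\psi^{*}(g^{c})$ coincides with $q=h$ there, so $\psi$ is genuinely a lightlike hypersurface carrying the original data $(h,Z)$. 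The causal orientation is the one already noted immediately before the statement, namely with respect to the timelike vector field $-T$ from (\ref{21}), so the hypothesis ``causally oriented'' of Theorem \ref{main1} is met.

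The heart of the verification is that $\overline{\nabla}^{c}Z$ is a vector fiber bundle isomorphism of $TN$. Here I would combine two inputs. On the one hand, Lemma \ref{LC}, item $1)$, gives $\nabla^{c}_{\partial_{s}}\partial_{s}=0$; that is, the integral curves of $Z=s\partial_{s}$ are lightlike geodesics and the expansion function of the hypersurface is $\lambda\equiv 1$, which is nowhere vanishing. On the other hand, for a lightlike hypersurface the identity (\ref{240120}) identifies the intrinsic endomorphism $A_{Z}$ with the null-Weingarten map $[\overline{\nabla}^{c}Z]$ on $\mathcal{E}$; since genericity of $(N,h,Z)$ is an intrinsic property independent of the rescaling used to make $Z$ complete, the genericity hypothesis says exactly that $[\overline{\nabla}^{c}Z]\colon \mathcal{E}\to \mathcal{E}$ is an isomorphism. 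As recalled in Section 5, the conjunction of genericity with $\lambda$ nowhere vanishing upgrades this to the assertion that $\overline{\nabla}^{c}Z\colon TN\to TN$ is a vector bundle isomorphism.

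With both hypotheses in hand, Theorem \ref{main1} applies verbatim: $\omega^{c}=\psi^{*}(\gamma^{c})$ is a Cartan connection, and the data it induces by Theorem \ref{direct} satisfy $h^{c}_{[\overline{\nabla}^{c}Z]^{-1}}=h$ together with $Z^{c}=\frac{1}{\lambda}Z$. Substituting $\lambda\equiv 1$ from Lemma \ref{LC} collapses the last formula to $Z^{c}=Z$, which finishes the argument. I expect the only genuinely subtle point to be the translation of the intrinsic genericity condition into the isomorphism property of the null-Weingarten map for this particular ambient metric; this is precisely what (\ref{240120}) guarantees, and it is what makes the normalization $\lambda=1$ so convenient, since it removes any discrepancy between $Z^{c}$ and $Z$ and lets the Ricci-flatness reduction to the one-parameter family $\{g^{c}\}$ interface cleanly with Theorem \ref{main1}.
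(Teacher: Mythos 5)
Your proposal is correct and takes essentially the same route as the paper, whose proof of Theorem \ref{61} consists precisely of the paragraph preceding the statement: the embedding (\ref{51}) is causally oriented with respect to $-T$, Lemma \ref{LC} gives expansion $\lambda=1$, genericity together with (\ref{240120}) makes $[\overline{\nabla}^{c}Z]$ (and hence $\overline{\nabla}^{c}Z$, since $\lambda$ is nowhere vanishing) a vector bundle isomorphism, and Theorem \ref{main1} then applies verbatim. Your final substitution of $\lambda\equiv 1$ into $Z^{\omega}=\frac{1}{\lambda}Z$ to obtain $Z^{c}=Z$ is exactly how the paper concludes.
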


\begin{example}
{\rm Following \cite{BZ17}, a lightlike manifold $(N,h)$ is said to be simple if 
\begin{enumerate}
\item  There is a submersion $\pi:N^{m+1}\to M^{m}$ 
such that the connected components of the fibers $\pi^{-1}(x)$ are the leaves of the lightlike foliation. 

\item For every $x\in M$, the set of all inner products $g_{y}$ on $T_{x}M$ obtained from the projections $T_{y}N\to T_{x}M$ is a regular generalized conformal structure. That is, the map 
$$
N \to \mathrm{Sym}^{+}(M),\quad y \mapsto g_{y}
$$ is a submanifold tranverse to the fibers of the projection $\mathrm{Sym}^{+}(M)\to M$. 
\end{enumerate}
Let $(N,h, Z)$ be a simple and generic lightlike manifold. For every section $\Gamma$ of $\pi:N\to M$, there is a one parameter family of Cartan connections on the principal fiber bundle $Q$ of all admissible linear frames of $(N,h,Z)$. 
}
\end{example}

\begin{example}
{\rm When the metrics $g^{\sigma}$ are warped products (see Remark \ref{warped})
\begin{equation}
g^{\sigma}\mid_{(\rho, x,s)}=ds\otimes d(\rho s)+d(\rho s)\otimes ds +(1+c\rho)^{2}\epsilon^{2}(s)g_{1},
\end{equation}
the lightlike hypersurface (\ref{51}) is totally umbilical with $B_{Z}=\frac{\epsilon'}{\epsilon}\overline{h}$, and the generic condition holds if and only if $\epsilon'(s) \neq 0$ for $s>0$.
}
\end{example}

\begin{example}
{\rm Let $(M,g_{0})$ be a Riemannian manifold. The Ricci flow is the PDE 
$$
\frac{\partial }{\partial t}g(t)=-2 \mathrm{Ric}(g(t)), \quad g(0)=g_{0},
$$
where $g(t)$ is a one-parametric family of Riemannian metrics on $M$ and $\mathrm{Ric}(g(t))$ are the corresponding Ricci tensors. There exists $T>0$ such that the solution $g(t)$ of the Ricci flow exists and is smooth and unique on $[0,T)$.  The lightlike manifold
\begin{equation}\label{81}
(N:=M\times [0,T), h= g(t)\oplus 0, Z= \partial_{t})
\end{equation}
is generic if and only if the Ricci tensors $\mathrm{Ric}(g(t))$ are non degenerate.}
\end{example}



\end{document}